  \DeclareSymbolFont{AMSb}{U}{msb}{m}{n}
  \DeclareSymbolFontAlphabet{\mathbb}{AMSb}
\newcommand{\fig}[1]{Fig.~\ref{fig:#1}}
\newcommand{\alg}[1]{\hyperref[alg:#1]{Algorithm~\ref*{alg:#1}}}
\newcommand{\assum}[1]{\hyperref[assumption:#1]{Assumption~\ref*{assumption:#1}}}
\newcommand{\subassum}[2]{\hyperref[assumption:#1]{Assumption~\ref*{assumption:#1}.#2}}
\newcommand{\lem}[1]{\hyperref[lem:#1]{Lemma~\ref*{lem:#1}}}
\newcommand{\corol}[1]{\hyperref[corol:#1]{Corollary~\ref*{corol:#1}}}
\newcommand{\prop}[1]{\hyperref[prop:#1]{Proposition~\ref*{prop:#1}}}
\newcommand{\thm}[1]{\hyperref[thm:#1]{Theorem~\ref*{thm:#1}}}
\newcommand{\sect}[1]{\hyperref[sect:#1]{Section~\ref*{sect:#1}}}
\newcommand{\subsect}[1]{\hyperref[subsect:#1]{Subsection~\ref*{subsect:#1}}}
\newcommand{\app}[1]{\hyperref[app:#1]{Appendix~\ref*{app:#1}}}
\newtheorem*{remark}{Remark}
\newtheorem*{example}{Example}
\newtheorem{assumption}{Assumption}
\newtheorem{lemma}{Lemma}
\newtheorem{corollary}{Corollary}
\newtheorem{definition}{Definition}
\newcommand{\mb}[1]{\boldsymbol{#1}}
\newcommand{\mc}[1]{\mathcal{#1}}
\newcommand{\md}[1]{\mathbb{#1}}
\let\epsilon\varepsilon
\let\phi\varphi
\newcommand{\NN}{\mathbb{N}}
\newcommand{\RR}{\mathbb{R}}
\newcommand{\oo}[2]{\left(#1, #2\right)}
\newcommand{\oc}[2]{\left(#1, #2\right]}
\newcommand{\cc}[2]{\left[#1, #2\right]}
\newcommand{\rg}[2]{\left\{#1 : #2\right\}}
\newcommand{\D}{\mathrm{D}}
\newcommand{\DX}{\nabla_{\md{X}}}
\newcommand{\DW}{\nabla_{\md{W}}}
\newcommand{\abs}[1]{{\left\vert #1 \right\vert}}
\newcommand{\norm}[1]{{\vert\kern-0.25ex\vert #1 \vert\kern-0.25ex\vert}}
\newcommand{\inner}[2]{{\left\langle #1, #2 \right\rangle}}
\newcommand{\dl}{n_{\mathrm{L}}}
\newcommand{\dnl}{n_{\mathrm{NL}}}
\newcommand{\best}[1]{#1^{\mathrm{best}}}
\newcommand{\red}[1]{#1^{\mathrm{red}}}
\newcommand{\proxmap}[3]{\mathrm{Prox}_{\psi}(#1, #2, #3)}
\newcommand{\gradmap}[3]{G_{\psi}(#1, #2, #3)}
\newcommand{\dd}[1]{\ \mathrm{d} #1}
\acrodef{pde}[PDE]{partial differential equation}
\acrodef{fem}[FEM]{finite element method}
\acrodef{pinn}[PINN]{physics-informed neural network}
\acrodef{dof}[DOF]{degree of freedom}
\acrodef{cg}[CG]{conjugate gradient}
\acrodef{pl}[PL]{Polyak--\L ojasiewicz}
\title[A convergence framework for energy minimisation in nonlinear approximation spaces]{A convergence framework for energy minimisation of linear self-adjoint elliptic PDEs in nonlinear approximation spaces}
\date{\today}
\keywords{Energy minimisation, Nonlinear approximation, Nonlinear Céa's lemma}
\author[A. Magueresse]{Alexandre Magueresse$^{\dagger\ast}$}
\address{$^\dagger$School of mathematics\\Monash university\\Clayton\\Victoria 3800\\Australia}
\email{alexandre.magueresse@monash.edu}
\author[S. Badia]{Santiago Badia$^{\dagger}$}
\email{santiago.badia@monash.edu}
\thanks{$^\ast$Corresponding author}
\begin{document}

\begin{abstract}
  Recent years have seen the emergence of nonlinear methods for solving partial differential equations (PDEs), such as physics-informed neural networks (PINNs). While these approaches often perform well in practice, their theoretical analysis remains limited, especially regarding convergence guarantees. This work develops a general optimisation framework for energy minimisation problems arising from linear self-adjoint elliptic PDEs, formulated over nonlinear but analytically tractable approximation spaces. The framework accommodates a natural split between linear and nonlinear parameters and supports hybrid optimisation strategies: linear variables are updated via linear solves or steepest descent, while nonlinear variables are handled using constrained projected descent.
We establish both local and global convergence of the resulting algorithm under modular structural assumptions on the discrete energy functional, including differentiability, boundedness, regularity, and directional convexity. These assumptions are stated in an abstract form, allowing the framework to apply to a broad class of nonlinear approximation manifolds. In a companion paper [Magueresse, Badia (2025, arXiv:2508.17705)], we introduce a concrete instance of such a space based on overlapping free-knot tensor-product B-splines, which satisfies the required assumptions and enables geometrically adaptive solvers with rigorous convergence guarantees.
\end{abstract}

\maketitle

\section{Introduction}
\label{sect:introduction}
Recent years have seen a growing interest in solving variational problems and \acfp{pde} using nonlinear approximation spaces, where tunable parameters control the basis functions, allowing the discretisation to adapt dynamically to the solution. Unlike classical linear methods, such as the \ac{fem}, which relies on fixed basis functions, nonlinear approximation offers the potential to capture sharp gradients, layers, and singularities with significantly fewer degrees of freedom by concentrating resolution where needed. However, this flexibility comes at the cost of increased algorithmic and analytical complexity. The optimisation problems that arise are often non-convex, the approximation spaces may lack linear structure, and enforcing constraints, such as boundary conditions, mesh regularity, or boundedness, becomes nontrivial.

One of the most influential developments in this context is the rise of \acp{pinn} \cite{raissi2019physics}. By representing \ac{pde} solutions as neural networks trained to minimise residual-based losses, \acp{pinn} bypass mesh generation and operate on highly expressive, nonlinear function spaces. The Deep Ritz Method \cite{weinan2018deep} exemplifies another branch of nonlinear approximation for variational problems, where neural networks parameterise trial functions minimising energy functionals directly. This framework has successfully tackled problems in diverse fields, yet it also exposes some of the central challenges of nonlinear approximation: expensive numerical integration, difficulties in enforcing boundary conditions, sensitivity to hyperparameters, and highly non-convex optimisation landscapes. Despite growing empirical evidence, rigorous convergence analyses for \acp{pinn} remain scarce and typically apply only in asymptotic or simplified regimes \cite{de2024numerical, beck2022full}. This gap highlights a broader lack of theoretical foundations for nonlinear approximation methods in variational settings.

A related line of work explores free-knot B-splines, which introduce nonlinearity through adaptive knot placement while retaining the structure and locality of classical spline bases. Originating in one-dimensional data fitting \cite{jupp1978approximation, beliakov2004least, kovacs2019nonlinear}, these methods have been extended to higher dimensions via tensor-product constructions \cite{schutze2003bivariate, deng2004optimizing, zhang2016b}, often guided by heuristic adaptivity or constrained optimisation to maintain mesh quality. The approximation properties of linear free-knot splines are well understood through their equivalence with ReLU neural networks \cite{he2020relu, opschoor2020deep}, which have been extensively analysed in terms of expressive power and optimal approximation rates \cite{yarotsky2017error, petersen2018optimal, daubechies2022nonlinear}. However, the practical feasibility of achieving these rates---namely, how to compute or approximate the best representations numerically---remains poorly understood. Moreover, the use of free-knot spline spaces for the discretisation of \acp{pde} has received little attention and remains an open area of study.

In contrast, classical adaptive methods, such as $h$-, $p$-, and $r$-adaptivity, have long provided reliable strategies for controlling approximation error and distributing computational resources efficiently. These methods refine, enrich, or reposition discretisation elements based on error estimators, while maintaining robust mathematical properties like stability and convergence \cite{ainsworth1997posteriori, babuvska1994p}. Among them, $r$-adaptivity stands out for its conceptual proximity to nonlinear approximation: by relocating mesh nodes according to a parametric transformation \cite{huang2010adaptive, budd2009adaptivity}, it introduces a nonlinear dependence on discretisation parameters, echoing modern approaches based on parametric basis functions.

The convergence of first-order methods in unconstrained settings is by now classical. For smooth, non-convex problems, it is well-known that gradient descent converges to a quasi-stationary point in $O(\epsilon^{-2})$ iterates, where $\epsilon$ is the target gradient norm. Global convergence guarantees can be obtained under some kind of convexity or gradient growth assumption, for example the \ac{pl} inequality guarantee global convergence to critical points and even linear rates, without requiring convexity \cite{karimi2016linear}. Recent work has advanced the understanding of gradient descent dynamics for structured non-convex functionals, with a focus on characterising attraction regions under weak regularity conditions \cite{traonmilin2023basins}. Their insights on basin geometry and convergence pathways directly inspire key aspects of our analysis in the nonlinear approximation setting.

In constrained optimisation, projected gradient descent and its extensions, such as mirror descent, are standard tools for handling inequality, geometric, or manifold constraints. However, most theoretical results for these methods assume convexity of the objective or the feasible set, or rely on strong regularity conditions \cite{bubeck2015convex}. For structured non-convex problems, particularly those arising from variational formulations of \acp{pde}, such assumptions may fail, and convergence guarantees are much less mature. Designing and analysing algorithms that can robustly handle constraints in non-convex, parameter-dependent settings remains a central open challenge.

\subsection*{Contributions}

We develop a general analytical framework for variational problems posed over nonlinear approximation spaces, aiming to address these theoretical gaps. Our setting is abstract and encompasses a wide class of parameter-dependent spaces, independent of any specific choice of basis functions or representation. Under minimal structural assumptions---uniform coercivity, differentiability, boundedness, and a directional convexity condition on the discrete energy functional---we establish both local and global convergence results for alternating minimisation schemes coupling linear and nonlinear parameters.

We propose a general two-step optimisation strategy: the linear parameters must satisfy a sufficient energy decrease condition, which, for instance, can be realised via an exact linear solve, an inexact \ac{cg} step, or a simple gradient update. The idea of eliminating linear parameters to simplify nonlinear optimisation problems is classical, tracing back to \cite{lawton1971elimination} in the context of least-squares curve fitting. To take constraints into account, the nonlinear parameters are updated via mirror descent, a generalisation of projected gradient descent, though our analysis can accommodate broader update rules. This leads to a nonlinear analogue of Céa's lemma, quantifying the gap between the iterates and a best approximation up to an optimisation error. Our results highlight the importance of structural properties in establishing convergence guarantees, an aspect often lacking in the analysis of \acp{pinn} and related methods. Beyond its theoretical interest, our framework provides algorithmic insights that can inform the design of robust numerical methods for constrained variational problems on nonlinear approximation spaces.

The theoretical framework we develop in this work has been largely motivated by our companion paper \cite{companion}, where we apply it to tensor-product free-knot B-spline spaces. In that context, the interplay between the linear spline coefficients and the nonlinear knot positions naturally leads to alternating minimisation algorithms of the type studied here. Several modelling choices and algorithmic observations made in the companion paper have directly influenced the assumptions we adopt in this work. In particular, the geometric constraints on knot placement and the lack of global convexity prompted us to seek convergence guarantees under weaker conditions, such as directional convexity and boundedness of derivatives, rather than relying on stronger, but less applicable, global regularity or convexity assumptions. A key example is the differentiability of the discrete energy functional: while differentiability of the basis functions in the underlying Hilbert space is not strictly necessary, it simplifies the analysis and provides a practical guideline. We strike a balance by assuming differentiability of the energy while also giving a sufficient condition that is often easy to verify in applications. This analytical work thus provides a rigorous foundation for the algorithms explored in the companion study while extending beyond it to a more general class of nonlinear approximation spaces.

\section{Abstract setting}
\label{sect:setting}
\subsection{Continuous problem}

We consider a self-adjoint elliptic \ac{pde} posed in weak (variational) form, via the minimisation of an associated energy functional. Let $a: U \times U \to \RR$ be a symmetric, coercive, and continuous bilinear form, and let $\ell: U \to \RR$ be a continuous linear form, where $U$ is a suitable Hilbert space. We seek $u^{\star} \in U$ such that
$$a(u^{\star}, v) = \ell(v), \qquad \forall v \in U.$$
The well-posedness of this problem relies on the coercivity and continuity of the forms: for all $u, v \in U$,
$$a(u, u) \geq \alpha \norm{u}_{U}^{2}, \qquad \abs{a(u, v)} \leq \norm{a}_{U \times U} \norm{u}_{U} \norm{v}_{U}, \qquad \abs{\ell(v)} \leq \norm{\ell}_{U} \norm{v}_{U},$$
for some coercivity constant $\alpha > 0$. The continuity constants $0 \leq \norm{a}_{U \times U}, \norm{\ell}_{U} < \infty$ coincide with the operator norm of $a$ and $\ell$, defined respectively as
$$\norm{a}_{U \times U} \doteq \sup_{u \in U} \sup_{v \in U} \frac{\abs{a(u, v)}}{\norm{u}_{U} \norm{v}_{U}}, \qquad \norm{\ell}_{U} \doteq \sup_{v \in U} \frac{\abs{\ell(v)}}{\norm{v}_{U}}.$$
The Lax--Milgram lemma then ensures the existence and uniqueness of the solution. Moreover, $u^{\star}$ is the minimiser of the \emph{energy functional} $\mc{J}: U \to \RR$ defined by
$$\mc{J}(u) \doteq \frac{1}{2} a(u, u) - \ell(u).$$
To quantify approximation quality, we introduce the energy norm $\norm{u}_{a}^{2} \doteq a(u, u)$, which reflects the natural topology induced by the variational problem. Although the exact energy $\mc{J}(u^{\star})$ is typically unknown, the identity
\begin{equation}
    \label{eq:energy-gap}
    \mc{J}(u) - \mc{J}(u^{\star}) = \frac{1}{2} \norm{u - u^{\star}}_{a}^{2}
\end{equation}
for all $u \in U$ shows that, up to an additive constant, the energy functional provides a direct measure of proximity to the global minimiser in the energy norm. This observation motivates its use not only as an optimisation target but also as a surrogate error indicator, which is particularly relevant in adaptive settings where refinement is guided by energetic considerations.

While the proposed methodology applies broadly to such variational problems, we illustrate it using two model cases: a function approximation problem and a diffusion-reaction (Poisson-type) equation. We will use these examples to highlight regularity and differentiability aspects of the nonlinear approximation spaces.

\subsubsection{Notations}

Let $\Omega \subset \RR^{d}$ be a bounded domain with Lipschitz boundary $\Gamma = \partial \Omega$. Define the Sobolev spaces $H^{0}(\Omega) = L^{2}(\Omega)$, consisting of square-integrable functions on $\Omega$; $H^{1}(\Omega)$, consisting of functions in $L^{2}(\Omega)$ with square-integrable weak derivatives; and $H^{1}_{0}(\Omega) = \{v \in H^{1}(\Omega) : v|_{\Gamma} = 0\}$, consisting of functions in $H^{1}(\Omega)$ with zero trace on $\Gamma$. Let also $H^{-1}(\Omega)$ denote the (topological) dual of $H^{1}_{0}(\Omega)$.

\subsubsection{Function approximation problem}

Given $f \in L^{2}(\Omega)$, the function approximation problem consists of finding $u \in L^{2}(\Omega)$ such that $u = f$ in $\Omega$. Its variational formulation corresponds to
$$a(u, v) \doteq \int_{\Omega} u v \dd{\Omega}, \qquad \ell(v) \doteq \int_{\Omega} f v \dd{\Omega}.$$

\subsubsection{Diffusion-reaction problem}

In strong form, the diffusion-reaction problem is: find $u \in H^{1}(\Omega)$ such that
$$-\nabla \cdot (\mb{K} \nabla u) + \sigma u = f \text{ in } \Omega, \qquad u = g \text{ on } \Gamma,$$
where $\mb{K} \in L^{\infty}(\Omega, \RR^{d \times d})$ is a symmetric, positive-definite diffusivity matrix, $\sigma \in L^{\infty}(\Omega)$ with $\sigma \geq 0$ is the reaction coefficient, $f \in H^{-1}(\Omega)$ is a source term, and $g \in H^{1/2}(\Gamma)$ is the boundary condition. For simplicity, we consider only Dirichlet boundary conditions, but Neumann or Robin boundary conditions could also be treated. Let $\bar{u} \in H^{1}(\Omega)$ be a lifting of the Dirichlet boundary conditions such that $\bar{u} = g$ on $\Gamma$. The weak form of this problem corresponds to
$$a(u, v) \doteq \int_{\Omega} (\mb{K} \nabla u \cdot \nabla v + \sigma u v) \dd{\Omega}, \qquad \ell(v) \doteq \int_{\Omega} (f + \nabla \cdot (\mb{K} \nabla \bar{u}) - \sigma \bar{u}) v \dd{\Omega},$$
where the integral $\int_{\Omega} f v \dd{\Omega}$ is understood as a duality pairing between $H^{-1}(\Omega)$ and $H^{1}_{0}(\Omega)$.

\subsection{Energy minimisation in nonlinear approximation spaces}

We aim to approximate the exact solution $u^{\star}$ in a finite-dimensional space $V \subset U$ by minimising the energy $\mc{J}$ over $V$. Unlike traditional methods where $V$ is a fixed linear subspace, we assume $V$ is a smoothly parameterised manifold of functions, defined as the image of a \emph{realisation map} $\mc{R} : \Theta \to U$ from a finite-dimensional \emph{parameter space} $\Theta$. The corresponding discrete problem becomes the minimisation of the \emph{discrete energy functional}
$$\mc{K} \doteq \mc{J} \circ \mc{R}: \Theta \to \RR.$$

Not all approximation spaces are suitable for energy minimisation. To ensure that the problem is well-posed, amenable to gradient-based methods, and numerically tractable, we impose structural assumptions on the parameter space and the realisation map.
\begin{description}
    \item[Existence of minimisers] Any continuous, coercive, and lower-bounded function defined on a compact set admits global minimisers \cite[Chapter 2]{ekeland1999convex}. Since $\mc{J}$ satisfies these properties on $U$, they are automatically transferred to the restriction of $\mc{J}$ to $V$, and it suffices to ensure that $V$ is closed in $U$. Assuming that the realisation map is continuous, $V$ is closed if $\Theta$ is compact.
    \item[Approximability] The nonlinear space $V$ must have strong approximation properties in $U$. Ideally, the decay rate of the approximation error with respect to the number of parameters should match or exceed that of classical schemes.
    \item[Differentiability and regularity] Gradient-based optimisation requires that the discrete energy $\mc{K}$ be differentiable, with a uniformly continuous gradient on $\Theta$, ensuring convergence of standard optimisation algorithms \cite[Chapter 1]{nesterov2013introductory}.
    \item[Well-conditioned parameterisation] As pointed out in \cite{petersen2021topological}, another key factor in ensuring reliable convergence is the comparability of the function norm in $U$ and the parameter norm in $\Theta$. This requires the realisation map $\mc{R}$ to be a uniformly continuous homeomorphism with a uniformly continuous inverse, thereby ensuring a well-conditioned parameterisation.
    \item[Computational feasibility] Efficient and reliable evaluation of both $\mc{K}$ and its gradient is crucial, especially when $\mc{J}$ contains integrals approximated by quadrature. Controlling the accuracy of these approximations is essential to prevent artificial minima or spurious oscillations in the optimised solution \cite{mishra2023estimates}.
\end{description}

We now examine in more detail the differentiability of the energy functional and the approximation properties of nonlinear spaces.

\subsubsection{Differentiability of the discrete energy}
\label{subsect:differentiability}

The differentiability of the energy functional with respect to the nonlinear parameters is a delicate matter that cannot be answered by a naive application of the chain rule. While the realisation map $\mc{R}$ is often weakly differentiable, it may fail to be differentiable into the function space $U$ on which the energy $\mc{J}$ is defined. As a result, the variations generated by the differential $\D \mc{R}$ along parameter directions may not belong to the tangent space of $U$, making the composition $\D \mc{J} \circ \D \mc{R}$ ill-defined.

The core difficulty stems from the fact that differentiation with respect to parameters does not, in general, commute with spatial integration. This mismatch often manifests as a loss of regularity: the derivative of $\mc{R}$ may produce variations of lower smoothness than required for admissible test directions in the domain of $\D \mc{J}$. Geometrically, this reflects a failure of the tangent space to the approximation manifold $V$ to embed within the tangent space of the ambient function space $U$. The following example illustrates that the inclusion $\D \mc{R} \subset U$ is not necessary for the differentiability, or even continuous differentiability, of the discrete energy.

\begin{example}
    Let $\Omega \subset \RR$ be a bounded interval. Given $I \subset \RR$, let $\chi_{I}: \RR \to \{0, 1\}$ denote the indicator function of $I$. Consider the realisation
    $$\mc{R}(\theta) = w_{1} \chi_{\oo{a}{b}} + w_{2} \chi_{\oo{b}{c}},$$
    for $\theta = (w_{1}, w_{2}, a, b, c) \in \Theta$, where $\Theta \subset \RR^{5}$ is the subset enforcing the constraints $a \leq b \leq c \in \Omega$. Since $\Omega$ is bounded, it is easy to see that $\mc{R}(\theta) \in L^{2}(\RR)$ for all $\theta \in \Theta$. Still, $\mc{R}$ is not differentiable in $L^{2}(\RR)$, but only in $H^{-1}(\RR)$, with
    $$\D \mc{R}(\theta) = \chi_{\oo{a}{b}} \dd{w_{1}} + \chi_{\oo{b}{c}} \dd{w_{2}} - w_{1} \delta_{a} \dd{a} + (w_{1} - w_{2}) \delta_{b} \dd{b} + w_{2} \delta_{c} \dd{c}.$$
    Here $\delta_{z} \in H^{-1}(\RR)$ denotes the Dirac delta centred at $z \in \RR$. For the function approximation problem, we compute
    $$a(\mc{R}(\theta), \mc{R}(\theta)) = w_{1}^{2} (b - a) + w_{2}^{2} (c - b), \qquad \ell(\mc{R}(\theta)) = w_{1} \int_{\oo{a}{b} \cap \Omega} f \dd{\Omega} + w_{2} \int_{\oo{b}{c} \cap \Omega} f \dd{\Omega}.$$
    In particular, $a(\mc{R}(\theta), \mc{R}(\theta))$ is infinitely differentiable in $\theta$, and $\ell(\mc{R}(\theta))$ has the same regularity as $f$ in $\Omega$.
\end{example}

Whether or not this type of regularity issue arises depends on the properties of both the realisation map and the structure of the approximation space. As such, establishing the differentiability of the discrete energy must be addressed on a case-by-case basis.

\subsubsection{Nonlinear approximation rates}

The effectiveness of a nonlinear space $V$ hinges on its approximation power. Ideally, when $V$ belongs to a nested sequence of approximation spaces, the associated approximation error should decrease at least as rapidly as in standard methods. In standard discretisations such as finite elements, finite volumes or finite differences, convergence rates are expressed in terms of mesh size assuming quasi-uniformity. However, more general approximation spaces may lack a natural length scale, making it more appropriate to measure convergence relative to the number of parameters. In this context, classical convergence rates must be interpreted carefully, as they depend on the spatial dimension.

\subsection{Separation of linear and nonlinear parameters}

In many cases, it is natural for the realisation map to depend linearly on some parameters and nonlinearly on others. This separation gives rise to additional structure that can be leveraged in both the representation of the energy functional and the design of efficient optimisation algorithms, as we now describe.

\subsubsection{Linear parameters and parametric basis functions}

To formalise this, we assume without loss of generality that the parameter space $\Theta$ decomposes as a product $\md{W} \times \md{X}$, where $\md{W} = \RR^{\dl}$ for some $\dl \geq 1$ represents a set of \emph{linear parameters} and $\md{X} \subset \RR^{\dnl}$, for some $\dnl \geq 0$, is a closed set of \emph{nonlinear parameters}. The realisation map then takes the form
$$\mc{R}(\mb{w}, \mb{\xi}) = \sum_{k = 1}^{\dl} \mb{w}_{k} \mb{\phi}_{k}(\mb{\xi}) = \mb{w}^{\ast} \mb{\phi}(\mb{\xi}),$$
where each $\mb{\phi}_{k}: \md{X} \to U$ is a \emph{parametric basis function}, gathered in the vector-valued map $\mb{\phi}: \md{X} \to U^{\dl}$. The vector $\mb{w} \in \md{W}$ thus plays the role of \emph{degrees of freedom}. Here $\mb{w}^{\ast} \mb{\phi}(\mb{\xi})$ denotes the inner product of two vectors.

This decomposition includes, as special cases, both purely linear and purely nonlinear parameterisations. Taking $\dnl = 0$ ($\md{X} = \emptyset$) recovers the classical setting of linear discretisation in a fixed basis. At the other extreme, a fully nonlinear model---where no linear parameter is separated---can be represented by setting $\dl = 1$ and constraining $\mb{w}_{1} = 1$, so that the realisation becomes $\mb{\phi}_{1}(\mb{\xi}) = \mc{R}(\mb{\xi})$.

When the approximation space separates linear parameters, it naturally acquires the structure of a vector bundle, in which each parameter corresponds to a vector space within the total space, known as its fiber. \fig{bundle} illustrates the key components of this structure: the base space, the total space, individual fibers, and sections that select one representative vector from each fiber.

\begin{figure}
    \centering
    \def\svgwidth{0.6\linewidth}
    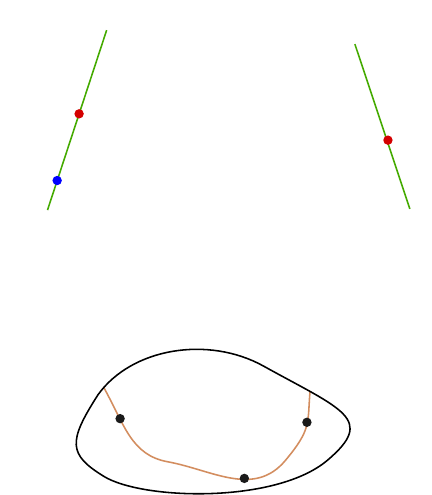
    \caption{Visualisation of a parameter-dependent approximation space with vector bundle structure. The base space $\md{X}$ (nonlinear parameter space) is shown at the bottom. To each parameter $\mb{\xi} \in \md{X}$ corresponds a fiber $V_{\mb{\xi}} = \mathrm{Span}(\mb{\phi}(\mb{\xi})) = \mc{R}(\md{W}, \mb{\xi})$. The figure depicts the fibers lying above parameters along the brown path in $\md{X}$, and highlights three fibers in green. Collectively, the fibers form the total space $V$, illustrating how the local linear spaces vary continuously with the parameters to assemble into a nonlinear approximation space. A section of this bundle is a map that assigns to every parameter $\mb{\xi}$ a vector in the corresponding fiber $V_{\mb{\xi}}$. Two sections are shown: $\red{\mc{R}}$, in red, which selects the energy minimiser in each fiber (see \eqref{eq:reduced-realisation}), and the zero section, in blue.}
    \label{fig:bundle}
\end{figure}

\subsubsection{Parametric quadratic energy}

This structure leads to a convenient form for the energy functional. Substituting the expression of $\mc{R}$ into the definition of $\mc{K}$, and using the bilinearity of $a$ and linearity of $\ell$, we obtain
$$\mc{K}(\mb{w}, \mb{\xi}) = \frac{1}{2} \mb{w}^{\ast} \mb{A}(\mb{\xi}) \mb{w} - \mb{w}^{\ast} \mb{\ell}(\mb{\xi}),$$
where the stiffness matrix $\mb{A}: \md{X} \to \RR^{\dl \times \dl}$ and the load vector $\mb{\ell}: \md{X} \to \RR^{\dl}$ are defined componentwise by
$$\mb{A}(\mb{\xi})_{ij} = a(\mb{\phi}_{j}(\mb{\xi}), \mb{\phi}_{i}(\mb{\xi})), \qquad \mb{\ell}(\mb{\xi})_{j} = \ell(\mb{\phi}_{j}(\mb{\xi})),$$
for all $i, j \in \rg{1}{\dl}$, respectively. Here the notation $\rg{a}{b}$, for $a \leq b \in \NN$, refers to the set of integers $\{a, \ldots, b\}$. By symmetry and coercivity of the bilinear form $a$, the matrix $\mb{A}(\mb{\xi})$ is symmetric and positive semi-definite for all $\mb{\xi} \in \md{X}$. It may fail to be positive definite if the basis functions $(\mb{\phi}_{k}(\mb{\xi}))_{k \in \rg{1}{\dl}}$ are linearly dependent.

For a fixed nonlinear parameter $\mb{\xi}$, the map $\mb{w} \mapsto \mc{K}(\mb{w}, \mb{\xi})$ is a convex quadratic function. More precisely, it is minimal when $\mb{w}$ solves the linear system
\begin{equation}
    \label{eq:linear-system}
    \mb{A}(\mb{\xi}) \mb{w} = \mb{\ell}(\mb{\xi}).
\end{equation}

This observation motivates treating the linear and nonlinear parameters differently in the optimisation process: the convexity in $\mb{w}$ can be exploited using solvers tailored to quadratic minimisation or even linear solvers to eliminate the linear parameters, while $\mb{\xi}$ may be updated using general-purpose nonlinear optimisation methods.

\subsubsection{Best linear parameters and reduced energy}
\label{subsect:reduced-energy}

The following lemma shows that the linear system defining the best linear parameters remains consistent even when the basis functions are linearly dependent. Here, consistent means that the linear system admits at least one solution; equivalently, $\mb{\ell}(\mb{\xi}) \in \operatorname{Im}(\mb{A}(\mb{\xi}))$.

\begin{lemma}
    \label{lem:consistency}
    The linear system \eqref{eq:linear-system} is consistent for all $\mb{\xi} \in \md{X}$. Moreover, any two solutions $\mb{w}_{1}, \mb{w}_{2} \in \md{W}$ define the same realisation in $V$; that is, $\mc{R}(\mb{w}_{1}, \mb{\xi}) = \mc{R}(\mb{w}_{2}, \mb{\xi})$.
\end{lemma}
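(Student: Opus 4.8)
The plan is to exploit the positive semi-definiteness of $\mb{A}(\mb{\xi})$ together with the coercivity of $a$ to pin down the kernel of $\mb{A}(\mb{\xi})$. First I would recall that for a symmetric positive semi-definite matrix one has $\operatorname{Im}(\mb{A}(\mb{\xi})) = (\ker \mb{A}(\mb{\xi}))^{\perp}$, so that consistency, i.e. $\mb{\ell}(\mb{\xi}) \in \operatorname{Im}(\mb{A}(\mb{\xi}))$, is equivalent to showing that $\mb{\ell}(\mb{\xi})$ is orthogonal to $\ker \mb{A}(\mb{\xi})$.

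The central observation driving both claims is that a vector lies in $\ker \mb{A}(\mb{\xi})$ precisely when it realises the zero function. Indeed, for any $\mb{v} \in \md{W}$ the bilinearity of $a$ gives the identity $\mb{v}^{\ast} \mb{A}(\mb{\xi}) \mb{v} = a(\mc{R}(\mb{v}, \mb{\xi}), \mc{R}(\mb{v}, \mb{\xi})) = \norm{\mc{R}(\mb{v}, \mb{\xi})}_{a}^{2}$. If $\mb{v} \in \ker \mb{A}(\mb{\xi})$, then this quantity vanishes, and coercivity forces $\norm{\mc{R}(\mb{v}, \mb{\xi})}_{U} = 0$, hence $\mc{R}(\mb{v}, \mb{\xi}) = 0$ in $U$.

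With this in hand, consistency follows quickly: for $\mb{v} \in \ker \mb{A}(\mb{\xi})$, linearity of $\ell$ and linearity of $\mc{R}$ in its first argument yield $\mb{\ell}(\mb{\xi})^{\ast} \mb{v} = \ell(\mc{R}(\mb{v}, \mb{\xi})) = \ell(0) = 0$, so $\mb{\ell}(\mb{\xi}) \perp \ker \mb{A}(\mb{\xi})$ and therefore $\mb{\ell}(\mb{\xi}) \in \operatorname{Im}(\mb{A}(\mb{\xi}))$. For the second assertion, if $\mb{w}_{1}$ and $\mb{w}_{2}$ both solve \eqref{eq:linear-system}, then their difference lies in $\ker \mb{A}(\mb{\xi})$, and the central observation gives $\mc{R}(\mb{w}_{1} - \mb{w}_{2}, \mb{\xi}) = 0$; invoking once more the linearity of $\mc{R}$ in its first argument produces $\mc{R}(\mb{w}_{1}, \mb{\xi}) = \mc{R}(\mb{w}_{2}, \mb{\xi})$.

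I do not expect a serious obstacle, as the argument is essentially a dictionary between the matrix picture and the function-space picture. The only points deserving care are the passage from $\norm{\mc{R}(\mb{v}, \mb{\xi})}_{a} = 0$ to $\mc{R}(\mb{v}, \mb{\xi}) = 0$ in $U$, which must rely on coercivity rather than on the possibly degenerate energy norm alone, and the use of the symmetry of $\mb{A}(\mb{\xi})$ to identify $\operatorname{Im}(\mb{A}(\mb{\xi}))$ with the orthogonal complement of its kernel.
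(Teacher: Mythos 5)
Your proposal is correct and follows essentially the same route as the paper: both rest on the observation that coercivity forces any $\mb{v} \in \operatorname{Ker}(\mb{A}(\mb{\xi}))$ to satisfy $\mc{R}(\mb{v}, \mb{\xi}) = 0$, hence $\mb{\ell}(\mb{\xi})^{\ast}\mb{v} = \ell(\mc{R}(\mb{v},\mb{\xi})) = 0$, with uniqueness of the realisation following by linearity. The only cosmetic difference is that you invoke the standard identity $\operatorname{Im}(\mb{A}(\mb{\xi})) = (\operatorname{Ker}\mb{A}(\mb{\xi}))^{\perp}$ for symmetric matrices directly, whereas the paper re-derives it on the spot via an orthogonal diagonalisation of $\mb{A}(\mb{\xi})$.
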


\begin{proof}
    See \app{consistency}.
\end{proof}

In other words, \lem{consistency} ensures that given $\mb{\xi} \in \md{X}$, the energy functional $\mc{J}$ has a unique minimiser over the fibre
$V_{\mb{\xi}} \doteq \{\mc{R}(\mb{w}, \mb{\xi}), \mb{w} \in \md{W}\}$. The minimisation of $\mc{J}$ is even well-posed in $V_{\mb{\xi}}$, as
$$\alpha \norm{\mc{R}(\mb{w}, \mb{\xi})}_{U}^{2} \leq a(\mc{R}(\mb{w}, \mb{\xi}), \mc{R}(\mb{w}, \mb{\xi})) = \ell(\mc{R}(\mb{w}, \mb{\xi})) \leq \norm{\ell}_{U} \norm{\mc{R}(\mb{w}, \mb{\xi})}_{U},$$
so $\norm{\mc{R}(\mb{w}, \mb{\xi})}_{U} \leq \alpha^{-1} \norm{\ell}_{U}$ for all $\mb{w}$ solving the linear system \eqref{eq:linear-system}.

The ambiguity arising from a nontrivial kernel of $\mb{A}(\mb{\xi})$ can be resolved by selecting the unique solution of minimal Euclidean norm, which solves a convex quadratic problem: minimise the Euclidean norm $\norm{\mb{w}}_{2}$ over the affine solution space $\mb{w}_p + \operatorname{Ker}(\mb{A}(\mb{\xi}))$, where $\mb{w}_{p}$ is any particular solution to the linear system \eqref{eq:linear-system}. This problem has a unique solution, which corresponds to the Euclidean projection of $\mb{w}_{p}$ onto $\operatorname{Ker}(\mb{A}(\mb{\xi}))$. However, when $\mb{A}(\mb{\xi})$ is ill-conditioned, this projection may be numerically unstable, as its sensitivity increases with the inverse of the smallest nonzero eigenvalue of $\mb{A}(\mb{\xi})$; a quantity over which we have no direct control.

To circumvent this issue, we adopt a stronger assumption and require a uniform lower bound on the smallest eigenvalue of $\mb{A}(\mb{\xi})$. For clarity, we separate the conditioning of the bilinear form $a$ and that of the parametric basis $\mb{\phi}(\mb{\xi})$. To that aim, we introduce the Gram matrix $\mb{G}(\mb{\xi}) \in \RR^{\dl \times \dl}$ with entries
$$\mb{G}(\mb{\xi})_{ij} \doteq (\mb{\phi}_{j}(\mb{\xi}), \mb{\phi}_{i}(\mb{\xi}))_{U},$$
for all $i, j \in \rg{1}{\dl}$. This matrix satisfies $(\mc{R}(\mb{v}, \mb{\xi}), \mc{R}(\mb{w}, \mb{\xi}))_{U} = \mb{v}^{\ast} \mb{G}(\mb{\xi}) \mb{w}$ for all $\mb{v}, \mb{w} \in \md{W}$ and $\mb{\xi} \in \md{X}$. Let $\omega(\mb{\xi})$ denote the smallest eigenvalue of $\mb{G}(\mb{\xi})$. Then, the coercivity of $a$ implies
$$\mb{w}^{\ast} \mb{A}(\mb{\xi}) \mb{w} = a(\mc{R}(\mb{w}, \mb{\xi}), \mc{R}(\mb{w}, \mb{\xi})) \geq \alpha \norm{\mc{R}(\mb{w}, \mb{\xi})}_{U}^{2} = \alpha \mb{w}^{\ast} \mb{G}(\mb{\xi}) \mb{w} \geq \alpha \omega(\mb{\xi}) \norm{\mb{w}}_{2}^{2},$$
showing that the smallest eigenvalue of $\mb{A}(\mb{\xi})$ is greater than $\alpha \omega(\mb{\xi})$. Therefore, a uniform lower bound on $\omega(\mb{\xi})$ is sufficient to ensure the uniform positive definiteness of $\mb{A}(\mb{\xi})$.

\begin{assumption}
    \label{assumption:spd}
    There exists $\omega_{\min} > 0$ such that $\omega(\mb{\xi}) \geq \omega_{\min}$ for all $\mb{\xi} \in \md{X}$.
\end{assumption}

Under \assum{spd}, the linear system defining the best linear parameters can be solved uniquely at each nonlinear parameter, thereby defining the \emph{best linear parameters map} $\best{\mb{w}}: \md{X} \to \md{W}$ by
$$\best{\mb{w}}(\mb{\xi}) = \mb{A}(\mb{\xi})^{-1} \mb{\ell}(\mb{\xi}).$$
This induces the \emph{reduced realisation}
\begin{equation}
    \label{eq:reduced-realisation}
    \red{\mc{R}}: \md{X} \to V, \quad \mb{\xi} \mapsto \mc{R}(\best{\mb{w}}(\mb{\xi}), \mb{\xi}) = \mb{\ell}(\mb{\xi})^{\ast} \mb{A}(\mb{\xi})^{-1} \mb{\phi}(\mb{\xi}),
\end{equation}
and the associated \emph{reduced energy}
$$\red{\mc{K}}: \md{X} \to \RR, \quad \mb{\xi} \mapsto \mc{K}(\best{\mb{w}}(\mb{\xi}), \mb{\xi}) = -\frac{1}{2} \mb{\ell}(\mb{\xi})^{\ast} \mb{A}(\mb{\xi})^{-1} \mb{\ell}(\mb{\xi}).$$
Thus, minimising $\mc{K}$ over $\md{W} \times \md{X}$ is equivalent to minimising $\red{\mc{K}}$ over $\md{X}$.

The elimination the linear parameters from the optimisation problem reduces it to a purely nonlinear minimisation over $\md{X}$. Importantly, the gradient of the reduced energy can be computed without differentiating the best linear parameter map $\best{\mb{w}}$. Let $\DW$ and $\DX$ denote the gradients with respect to the linear and nonlinear parameters, respectively. Then, by construction,
$$\DW \mc{K}(\best{\mb{w}}(\mb{\xi}), \mb{\xi}) = \mb{0}.$$
Assuming that $\mc{K}$ is differentiable with respect to $\md{X}$, the chain rule implies that $\red{\mc{K}}$ is differentiable and
\begin{align*}
    \nabla \red{\mc{K}}(\mb{\xi}) & = \DW \mc{K}(\best{\mb{w}}(\mb{\xi}), \mb{\xi}) \DX \best{\mb{w}}(\mb{\xi}) + \DX \mc{K}(\mb{w}, \mb{\xi})|_{\mb{w} = \best{\mb{w}}(\mb{\xi})} \\
                                  & = \DX \mc{K}(\mb{w}, \mb{\xi})|_{\mb{w} = \best{\mb{w}}(\mb{\xi})}.
\end{align*}
The derivative of the map $\best{\mb{w}}$ does not appear in this expression. This simplification makes the reduced formulation especially attractive, as it enables a computationally efficient alternating optimisation strategy.

\section{Algorithm}
\label{sect:algorithm}
Building on the structure of the abstract framework, we introduce a hybrid optimisation algorithm tailored to approximation spaces with both linear and nonlinear parameters.

\subsection{Two-step algorithm}

We propose a minimisation algorithm that generates a sequence of parameters $(\mb{w}_{k}, \mb{\xi}_{k})_{k \geq 0}$ by alternating updates of the linear and nonlinear parameters, leveraging convexity in $\mb{w}$ and accommodating the generally nonconvex nature in $\mb{\xi}$. A generic version of the alternating scheme is summarised in \alg{optimiser}.

The algorithm begins by updating the linear parameters based on the initial nonlinear ones (line 1), ensuring that the linear variables are optimally (or approximately) set before any nonlinear updates. At each iteration, the nonlinear parameters are first updated using a general optimisation step (line 4). We present and analyse the mirror descent update \eqref{eq:nonlinear-step}, but other optimisers could be employed. After this, the linear parameters are updated based on the new nonlinear parameters (line 5). We consider two strategies for this step: either an exact solve of the linear system \eqref{eq:full-linear-solve}, or an approximate update that guarantees a decrease in energy \eqref{eq:partial-linear-solve}. The algorithm concludes with a final linear solve (line 13), typically carried out with higher accuracy than the updates performed during the optimisation loop, to ensure that the linear parameters are close to optimal.

Owing to the conformity of the approximation, making use of \eqref{eq:energy-gap}, the discrete energy may be expressed as
$$\mc{K}(\mb{w}, \mb{\xi}) = \mc{J}(u^{\star}) + \frac{1}{2} \norm{\mc{R}(\mb{w}, \mb{\xi}) - u^{\star}}_{a}^{2}.$$
Thus, minimising $\mc{K}(\mb{w}, \mb{\xi})$ is equivalent to minimising distance between the approximation $\mc{R}(\mb{w}, \mb{\xi})$ and the exact solution $u^{\star}$ in the energy norm. This interpretation motivates tracking the parameters $(\mb{w}_{\min}, \mb{\xi}_{\min})$ corresponding to the lowest energy observed during the optimisation process (lines 6--8), and returning the associated approximation $\mc{R}(\mb{w}_{\min}, \mb{\xi}_{\min})$ at the end of the training (line 14). This mechanism also offers worst-case bounds by guaranteeing that the algorithm produces a solution at least as good as the initial one.

The algorithm terminates after a fixed number of iteration, or when an early-stopping criterion is met (lines 9--11). Common criteria include the stabilisation of nonlinear parameters, indicated by
$$\norm{\mb{\xi}_{k+1} - \mb{\xi}_{k}}_{\md{X}} \leq \epsilon_{\md{X}}$$
for some threshold $\epsilon_{\md{X}} > 0$, or the plateauing of the energy values, detected when
$$|\mc{K}(\mb{w}_{k+1}, \mb{\xi}_{k+1}) - \mc{K}(\mb{w}_{k}, \mb{\xi}_{k})| \leq \epsilon_{\mc{K}}$$
for some threshold $\epsilon_{\mc{K}} > 0$. Here $\norm{\cdot}_{\md{X}}$ denotes a norm on $\RR^{\dnl}$.

As demonstrated in \subsect{local-convergence}, the distance between successive parameters provides an upper bound on the norm of the gradient of the energy functional. This makes it a suitable criterion for identifying a quasi-minimiser. Other termination criteria could be used, such as requiring the slope of the energy values over a fixed window to fall below a prescribed threshold, indicating that further progress is negligible.

\begin{algorithm}
    \caption{Two-step alternating minimisation of the energy functional. Given initial parameters $(\mb{w}_{0}, \mb{\xi}_{0}) \in \md{W} \times \md{X}$, number of epochs $E \geq 0$, linear and nonlinear update routines, step sizes $(\gamma_{k})_{k \geq 0}$ and tolerances $\epsilon_{\md{X}}, \epsilon_{\mc{K}} > 0$.}
    \label{alg:optimiser}
    \begin{algorithmic}[1]
        \State $\mb{w}_{0} \gets \texttt{UpdateLinear}(\mb{w}_{0}, \mb{\xi}_{0})$ \Comment{Initial update of linear parameters}
        \State $(\mb{w}_{\min}, \mb{\xi}_{\min}, \mc{K}_{\min}) \gets (\mb{w}_{0}, \mb{\xi}_{0}, \mc{K}(\mb{w}_{0}, \mb{\xi}_{0}))$
        \For{$k = 0, \ldots, E - 1$}
        \State $\mb{\xi}_{k+1} \gets \texttt{UpdateNonlinear}(\mb{w}_{k}, \mb{\xi}_{k}, \gamma_{k})$ \Comment{Update nonlinear parameters}
        \State $\mb{w}_{k+1} \gets \texttt{UpdateLinear}(\mb{w}_{k}, \mb{\xi}_{k+1})$ \Comment{Update linear parameters}
        \If{$\mc{K}(\mb{w}_{k+1}, \mb{\xi}_{k+1}) < \mc{K}_{\min}$} \Comment{Track best parameters}
        \State $(\mb{w}_{\min}, \mb{\xi}_{\min}, \mc{K}_{\min}) \gets (\mb{w}_{k+1}, \mb{\xi}_{k+1}, \mc{K}(\mb{w}_{k+1}, \mb{\xi}_{k+1}))$
        \EndIf
        \If{$\norm{\mb{\xi}_{k+1} - \mb{\xi}_k}_{\md{X}} \leq \epsilon_{\md{X}}$ or $\abs{\mc{K}(\mb{w}_{k+1}, \mb{\xi}_{k+1}) - \mc{K}(\mb{w}_{k}, \mb{\xi}_{k})} \leq \epsilon_{\mc{K}}$} \Comment{Check convergence}
        \State \textbf{break}
        \EndIf
        \EndFor
        \State $\mb{w}_{\min} \gets \texttt{UpdateLinear}(\mb{w}_{\min}, \mb{\xi}_{\min})$ \Comment{Final update of linear parameters}
        \State \Return $\mc{R}(\mb{w}_{\min}, \mb{\xi}_{\min})$
    \end{algorithmic}
\end{algorithm}

\subsection{Update of the linear parameters}

Since the matrix $\mb{A}(\mb{\xi})$ is symmetric and positive definite under \assum{spd}, the best linear parameters can be efficiently computed using the \ac{cg} method \cite{saad2003iterative}. A natural update for the linear component in \alg{optimiser} is therefore
\begin{equation}
    \label{eq:full-linear-solve}
    \texttt{UpdateLinear}(\mb{w}, \mb{\xi}) = \mb{A}(\mb{\xi})^{-1} \mb{\ell}(\mb{\xi}),
\end{equation}
with the inverse implicitly computed via \ac{cg}. This is particularly advantageous when $\mb{A}(\mb{\xi})$ is large and sparse.

When a full linear solve is too expensive, it can be replaced with an inexact update. For instance, one may perform a fixed number of \ac{cg} steps starting from the previous iterate $\mb{w}_{k}$, or take a few gradient descent steps to reduce the energy. A simple instance of the latter is the steepest descent update:
\begin{equation}
    \label{eq:partial-linear-solve}
    \texttt{UpdateLinear}(\mb{w}, \mb{\xi}) = \mb{w} + \beta(\mb{\xi}) \mb{r}(\mb{\xi}),
\end{equation}
where $\mb{r}(\mb{\xi}) \doteq \mb{\ell}(\mb{\xi}) - \mb{A}(\mb{\xi}) \mb{w} \in \md{W}^{\ast}$ (the dual of $\md{W}$) and the step size is given by
$$\beta(\mb{\xi}) = \frac{(\mb{r}(\mb{\xi}), \mb{r}(\mb{\xi}))_{2}}{(\mb{r}(\mb{\xi}), \mb{A}(\mb{\xi}) \mb{r}(\mb{\xi}))_{2}} \geq 0.$$
Here $(\cdot, \cdot)_{2}$ denotes the Euclidean inner product in $\md{W}$. Strictly speaking, the update $\mb{w} + \beta(\mb{\xi}) \mb{r}(\mb{\xi})$ is ill-posed at the continuous level, since $\mb{r}(\mb{\xi}) \in \md{W}^{\ast}$ and $\mb{w} \in \md{W}$. A proper formulation would require applying a Riesz isomorphism to map $\mb{r}(\mb{\xi})$ back into the primal space. However, in this finite-dimensional setting, we identify $\md{W} \cong \md{W}^{\ast}$ via the Euclidean structure and take the Riesz map to be the identity.

\subsection{Update of the nonlinear parameters}

A variety of methods are available for updating the nonlinear parameters, including momentum-based schemes such as ADAM \cite{kingma2014adam} or RMSProp, quasi-Newton methods like BFGS and SR1 \cite[Chapter 6]{nocedal1999numerical}, and metric-aware approaches such as natural gradients \cite{amari1998natural, martens2020new}. Each offers different trade-offs in terms of convergence speed, robustness, and computational cost. For clarity of presentation and ease of analysis, we focus here on mirror gradient descent, a flexible framework that naturally accommodates constraints and includes projected gradient descent as a special case.

In our setting, the parameter set $\md{X}$ is constrained, and the optimisation step must incorporate a projection back onto $\md{X}$. Depending on the structure of the feasible set $\md{X}$, computing the Euclidean projection onto $\md{X}$ required by projected gradient descent can be computationally demanding or even intractable. Mirror descent offers a principled generalisation by replacing the Euclidean distance in the proximal step with a Bregman divergence generated by a strictly convex distance-generating function \cite{bubeck2015convex}.

Let $\psi: \md{X} \to \RR$ be a twice differentiable, $\mu$-strongly convex function with respect to $\norm{\cdot}_{\md{X}}$, meaning that
$$\psi(\mb{\eta}) \geq \psi(\mb{\xi}) + \inner{\nabla \psi(\mb{\xi})}{\mb{\eta} - \mb{\xi}}_{\md{X}} + \frac{1}{2} \mu \norm{\mb{\eta} - \mb{\xi}}_{\md{X}}^{2},$$
for all $\mb{\xi}, \mb{\eta} \in \md{X}$. Here $\inner{\cdot}{\cdot}_{\md{X}}$ denotes the duality pairing between $\md{X}$ (seen as a subset of $\RR^{\dnl}$) and the dual of $\RR^{\dnl}$. The associated \emph{Bregman divergence} centred at $\mb{\xi} \in \md{X}$ is given by
$$D_{\psi}(\mb{\eta}; \mb{\xi}) \doteq \psi(\mb{\eta}) - \psi(\mb{\xi}) - \inner{\nabla \psi(\mb{\xi})}{\mb{\eta} - \mb{\xi}}_{\md{X}}.$$
It can be shown that the function $D_{\psi}(\ \cdot \ ; \mb{\xi})$ is nonnegative and $\mu$-strongly convex for every fixed $\mb{\xi} \in \md{X}$.

The \emph{proximal map} $\operatorname{Prox}: \md{W} \times \md{X} \times \RR_{+} \rightrightarrows \md{X}$ is defined by
$$\proxmap{\mb{w}}{\mb{\xi}}{\gamma} = \arg \min_{\mb{\eta} \in \md{X}} \gamma \inner{\DX \mc{K}(\mb{w}, \mb{\xi})}{\mb{\eta}}_{\md{X}} + D_{\psi}(\mb{\eta}; \mb{\xi}).$$
The proximal map may be set-valued if $\md{X}$ is not convex. The \emph{mirror descent update} is then obtained by choosing a suitable step size $\gamma > 0$ and setting
\begin{equation}
    \label{eq:nonlinear-step}
    \texttt{UpdateNonlinear}(\mb{w}, \mb{\xi}, \gamma) \in \proxmap{\mb{w}}{\mb{\xi}}{\gamma}.
\end{equation}
This step balances descent in the direction $-\DX \mc{K}(\mb{w}, \mb{\xi})$ with proximity to the current iterate $\mb{\xi}$, measured in the geometry induced by $\psi$.

The mirror descent update requires minimising a strongly convex function over the nonlinear parameter space. The existence readily follows from the closedness of $\md{X}$, but the uniqueness requires that $\md{X}$ be closed. Altogether, we make the following assumption to ensure the existence of the mirror descent step. We also include the compactness of the parameter space to ensure the existence and numerical reachability of global minimisers, by continuity of the discrete energy and the extreme value theorem.

\begin{assumption}
    \label{assumption:compact-convex}
    The nonlinear parameter space $\md{X}$ is convex and compact.
\end{assumption}

A notable special case of mirror descent is when $\psi(\mb{\xi}) = \norm{\mb{\xi}}_{\md{X}}^{2} / 2$, which yields $D_{\psi}(\mb{\eta}; \mb{\xi}) = \norm{\mb{\eta} - \mb{\xi}}_{\md{X}}^{2} / 2$ and the proximal map $\proxmap{\mb{w}}{\mb{\xi}}{\gamma}$ reduces to the standard Euclidean projection onto $\md{X}$, recovering projected gradient descent.

\section{Convergence analysis}
\label{sect:convergence}
We now analyse the convergence properties of the hybrid optimisation algorithm introduced above. Our results are divided into local and global components, depending on the strength of the structural assumptions placed on the reduced energy functional.

Local convergence is established under mild regularity conditions, such as Hölder continuity of the gradient, which are satisfied in many practical settings. Global convergence requires stronger assumptions, notably directional convexity of the reduced energy in a neighbourhood of the global minimisers. Under these conditions, we prove that the algorithm converges to a quasi-stationary point within the basin of attraction of a global minimum.

The resulting bounds yield a nonlinear analogue of Céa's lemma, showing that the computed solution approximates the best possible within the chosen approximation space, up to a quantifiable optimisation error.

To clarify the logical structure of our analysis and the dependencies between assumptions and results, we conclude this section with a diagram outlining the main implications, see \fig{roadmap}. Each node represents a key assumption or result, and directed edges indicate logical dependencies. This visual summary provides a high-level overview of the proof strategy and serves as a reference for verifying which hypotheses are required for each convergence result.

The diagram also highlights two layers of assumptions in our analysis. In practice, \assum{spd} and \assum{linear-boundedness-regularity} are typically straightforward to verify, as they reflect standard properties of well-posed discretisations. Likewise, \assum{compact-convex} holds for many parameter spaces. In contrast, \assum{nonlinear-boundedness-regularity} may fail in certain approximation spaces, especially when nonlinear parametrisations yield irregular basis functions. Even then, it is often possible to verify \assum{regularity} directly. This regularity property is the key requirement for establishing the local convergence of our algorithm and can often be checked independently of the more structural assumptions. The example discussed in \subsect{differentiability} illustrates such a situation.

\begin{figure}
    \centering
    \begin{tikzpicture}[
            scale=0.85, transform shape,
            node distance=2cm, thick,
            assumption/.style={
                    draw=blue!70!black,
                    fill=blue!5,
                    rounded corners=5pt,
                    thick,
                    text=blue!70!black,
                    font=\sffamily\small,
                    minimum width=3.5cm,
                    minimum height=1.5cm,
                    align=center,
                    rectangle,
                },
            result/.style={
                    draw=gray!50!black,
                    fill=gray!10,
                    thick,
                    font=\itshape\small\sffamily,
                    minimum width=3.5cm,
                    minimum height=1.5cm,
                    align=center,
                    rectangle,
                },
        ]
        \node[assumption] (A1) {\assum{spd} \\ $\mb{A}(\mb{\xi}) \succcurlyeq \alpha \omega_{\min}$};
        \node[assumption] (A3) [right=1cm of A1] {\assum{linear-boundedness-regularity} \\ $\mb{\phi}$ bounded \\ and Hölder};
        \node[assumption] (A2) [below=1cm of A1] {\assum{compact-convex} \\ $\md{X}$ convex \\ and compact};
        \node[assumption] (A5) [below=1cm of A3] {\assum{directional-convexity} \\ Directional \\ convexity};
        \node[assumption] (A4) [right=1cm of A5] {\assum{regularity} \\ $\DX \mc{K}$ Hölder};
        \node[assumption] (A6) [right=1cm of A3] {\assum{nonlinear-boundedness-regularity} \\ $\mb{\phi}$ diff. in $U$, $\DX \mb{\phi}$ \\ bounded and Hölder};

        \draw[ultra thick] (A1.north) ++(-2, 0.2) -- ++(13.1, 0) -- ++(0, -4.5) -- ++(-13.1, 0) -- ++ (0, 4.53);

        \draw[->, >=latex, align=center] (A6.east) ++(0.3, 0) to[out=-20, in=20, looseness=3] node[above=5mm, font=\itshape\small\sffamily] {\lem{regularity-energy} \\ (\ref*{assumption:linear-boundedness-regularity}, \ref*{assumption:nonlinear-boundedness-regularity})} (A4.east);
        \draw[dashed, thick] (A1.north) ++(11.1, -2) -- ++(-4.1, 0) -- ++(0, 2.2);

        \node[result] (L2) [below=1cm of A2] {\lem{energy-decrease} \\ Sufficient decrease};
        \node[result] (L3) [below=1cm of L2] {\lem{local-convergence} \\ Local convergence};
        \node[result] (L4) [right=1cm of L3] {\lem{surrogate} \\ Stopping criterion};
        \draw[->, >=latex] (L2.north) ++(0, 0.73) -- node[right=1mm, font=\itshape\small\sffamily] {(\ref*{assumption:linear-boundedness-regularity})} (L2.north);
        \draw[->, >=latex] (L2) -- (L3);
        \draw[->, >=latex] (L4.north) ++(0, 3.3) -- node[right=1mm, font=\itshape\small\sffamily] {(\ref*{assumption:compact-convex}, \ref*{assumption:regularity})} (L4.north);
        \draw[->, >=latex] (L3.west) ++(-0.25, 4.5) -- +(-1, 0) |- node[left=1mm, font=\itshape\small\sffamily] {(\ref*{assumption:compact-convex}, \ref*{assumption:regularity})} (L3);

        \node[result] (L7) [below=1cm of A4] {\lem{bounds-best-linear} \\ $\nabla \red{\mc{K}}$ Hölder};
        \node[result] (L5) [below=1cm of L7] {\lem{global-convergence}, \corol{nonlinear-cea} \\ Global convergence, \\ Nonlinear Céa's lemma};
        \draw[->, >=latex] (L7) -- (L5);
        \draw[->, >=latex] (L7.north) ++(0, 0.73) -- node[right=1mm, font=\itshape\small\sffamily] {(\ref*{assumption:spd}, \ref*{assumption:linear-boundedness-regularity}, \ref*{assumption:regularity})} (L7.north);
        \draw[->, >=latex] (L5.east) ++(0.08, 4.5) -- +(1, 0) |- node[right=1mm, font=\itshape\small\sffamily] {(\ref*{assumption:compact-convex}, \ref*{assumption:directional-convexity})} (L5);
    \end{tikzpicture}
    \caption{Logical dependency graph between assumptions and results in our abstract framework. Nodes represent assumptions (blue) and results (gray). In situations where \assum{nonlinear-boundedness-regularity} is not satisfied, \assum{regularity} may be checked independently.}
    \label{fig:roadmap}
\end{figure}
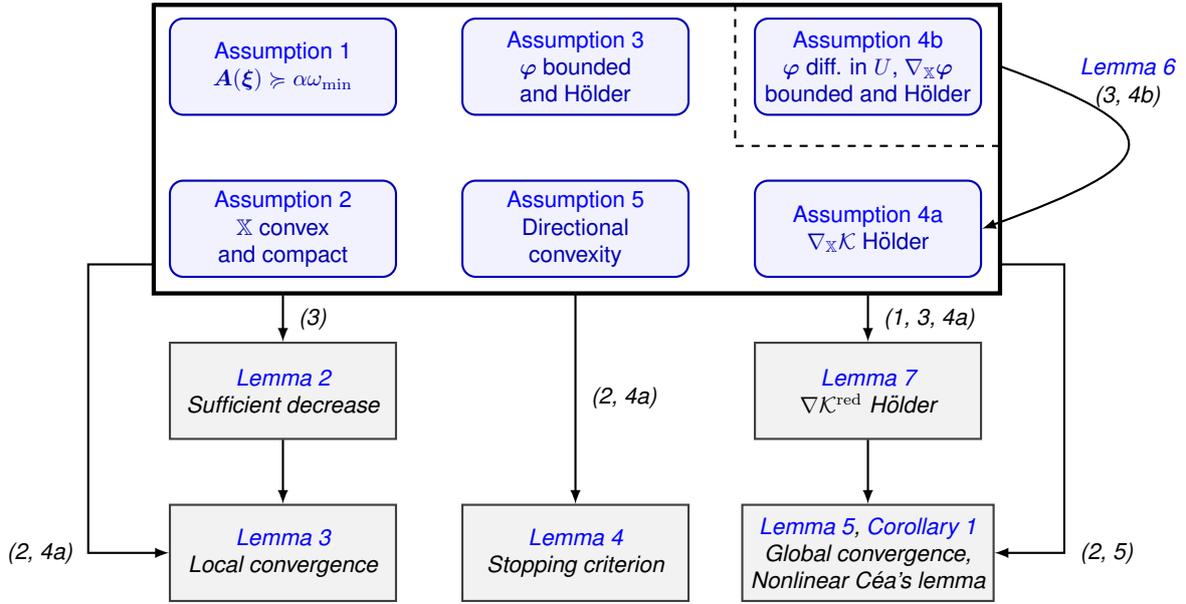

\subsection{Local convergence of mirror descent}
\label{subsect:local-convergence}

The local convergence of \alg{optimiser} relies on a guaranteed energy decrease at each step: for the linear parameters, this follows from classical properties of quadratic minimisation; for the nonlinear parameters, it is ensured by regularity of the energy gradient with respect to the nonlinear variables.

\subsubsection{Decrease condition for the linear parameters}

Both the full linear solve \eqref{eq:full-linear-solve} and the steepest descent update \eqref{eq:partial-linear-solve} satisfy a decrease condition, as formulated in the following lemma.

\begin{lemma}
    \label{lem:energy-decrease}
    Let $\mb{w} \in \md{W}$ and $\mb{\xi} \in \md{X}$, and define $\mb{w}_{+} = \texttt{UpdateLinear}(\mb{w}, \mb{\xi})$ via the update rule \eqref{eq:full-linear-solve} or \eqref{eq:partial-linear-solve}. It holds
    \begin{equation}
        \label{eq:decrease-cgsd}
        \mc{K}(\mb{w}_{+}, \mb{\xi}) \leq \mc{K}(\mb{w}, \mb{\xi}) - \frac{1}{2} \lambda_{\max}(\mb{\xi})^{-1} \norm{\DW \mc{K}(\mb{w}, \mb{\xi})}_{2, \ast}^{2},
    \end{equation}
    where $\lambda_{\max}(\mb{A}(\mb{\xi})) \leq \norm{a}_{U \times U} \norm{\mb{\phi}(\mb{\xi})}_{U, 2}^{2}$ is the largest eigenvalue of $\mb{A}(\mb{\xi})$.
\end{lemma}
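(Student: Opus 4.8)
The plan is to treat this as a direct computation on the convex quadratic $\mb{w} \mapsto \mc{K}(\mb{w}, \mb{\xi})$ at fixed $\mb{\xi}$, handling the two update rules separately and bounding the largest eigenvalue at the end. First I would record that the gradient in the linear variable is $\DW \mc{K}(\mb{w}, \mb{\xi}) = \mb{A}(\mb{\xi}) \mb{w} - \mb{\ell}(\mb{\xi}) = -\mb{r}(\mb{\xi})$, so that $\norm{\DW \mc{K}(\mb{w}, \mb{\xi})}_{2, \ast} = \norm{\mb{r}(\mb{\xi})}_{2}$, the dual of the Euclidean norm being itself. This reduces both target inequalities to statements about the residual $\mb{r}(\mb{\xi})$.

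For the full solve \eqref{eq:full-linear-solve}, $\mb{w}_{+} = \best{\mb{w}}(\mb{\xi})$ is the exact minimiser, so the energy gap is the standard identity $\mc{K}(\mb{w}, \mb{\xi}) - \mc{K}(\mb{w}_{+}, \mb{\xi}) = \frac{1}{2}(\mb{w} - \best{\mb{w}}(\mb{\xi}))^{\ast} \mb{A}(\mb{\xi})(\mb{w} - \best{\mb{w}}(\mb{\xi})) = \frac{1}{2}\mb{r}(\mb{\xi})^{\ast} \mb{A}(\mb{\xi})^{-1}\mb{r}(\mb{\xi})$, using $\mb{w} - \best{\mb{w}}(\mb{\xi}) = -\mb{A}(\mb{\xi})^{-1}\mb{r}(\mb{\xi})$. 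Since $\mb{A}(\mb{\xi})^{-1} \succeq \lambda_{\max}(\mb{\xi})^{-1}\mb{I}$ (invertibility being guaranteed by \assum{spd}), this gap is at least $\frac{1}{2}\lambda_{\max}(\mb{\xi})^{-1}\norm{\mb{r}(\mb{\xi})}_{2}^{2}$, which is exactly \eqref{eq:decrease-cgsd}.

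For the steepest descent update \eqref{eq:partial-linear-solve}, I would substitute $\mb{w}_{+} = \mb{w} + \beta(\mb{\xi})\mb{r}(\mb{\xi})$ into $\mc{K}$ and expand the quadratic. Using $\mb{A}(\mb{\xi})\mb{w} - \mb{\ell}(\mb{\xi}) = -\mb{r}(\mb{\xi})$, the first-order term is $-\beta(\mb{\xi})\norm{\mb{r}(\mb{\xi})}_{2}^{2}$ and the second-order term is $\frac{1}{2}\beta(\mb{\xi})^{2} \mb{r}(\mb{\xi})^{\ast} \mb{A}(\mb{\xi})\mb{r}(\mb{\xi})$; plugging in the optimal step $\beta(\mb{\xi}) = \norm{\mb{r}(\mb{\xi})}_{2}^{2} / (\mb{r}(\mb{\xi})^{\ast} \mb{A}(\mb{\xi})\mb{r}(\mb{\xi}))$ collapses these to an exact decrease of $\frac{1}{2}\norm{\mb{r}(\mb{\xi})}_{2}^{4} / (\mb{r}(\mb{\xi})^{\ast} \mb{A}(\mb{\xi})\mb{r}(\mb{\xi}))$. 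Bounding the Rayleigh quotient $\mb{r}(\mb{\xi})^{\ast} \mb{A}(\mb{\xi})\mb{r}(\mb{\xi}) \leq \lambda_{\max}(\mb{\xi})\norm{\mb{r}(\mb{\xi})}_{2}^{2}$ again yields \eqref{eq:decrease-cgsd}. The degenerate case $\mb{r}(\mb{\xi}) = \mb{0}$, where $\mb{w}$ already minimises, makes both sides vanish and is trivial; \assum{spd} guarantees $\mb{r}(\mb{\xi})^{\ast} \mb{A}(\mb{\xi})\mb{r}(\mb{\xi}) > 0$ whenever $\mb{r}(\mb{\xi}) \neq \mb{0}$, so $\beta(\mb{\xi})$ is well defined.

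Finally, the eigenvalue bound follows from coercivity and continuity: for any $\mb{w}$ one has $\mb{w}^{\ast} \mb{A}(\mb{\xi})\mb{w} = a(\mc{R}(\mb{w}, \mb{\xi}), \mc{R}(\mb{w}, \mb{\xi})) \leq \norm{a}_{U\times U}\norm{\mc{R}(\mb{w}, \mb{\xi})}_{U}^{2}$, and a triangle inequality followed by Cauchy--Schwarz gives $\norm{\mc{R}(\mb{w}, \mb{\xi})}_{U} \leq \norm{\mb{w}}_{2}\, \norm{\mb{\phi}(\mb{\xi})}_{U,2}$, whence $\lambda_{\max}(\mb{\xi}) \leq \norm{a}_{U\times U}\norm{\mb{\phi}(\mb{\xi})}_{U,2}^{2}$ after dividing by $\norm{\mb{w}}_{2}^{2}$ and taking the supremum. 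I do not expect a genuine obstacle: the lemma is essentially an exercise in quadratic minimisation, and the only points requiring care are the correct identification of the dual norm of the gradient with $\norm{\mb{r}(\mb{\xi})}_{2}$ and the handling of the trivial vanishing-residual case, both of which are minor.
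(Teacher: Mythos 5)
Your proof is correct and follows essentially the same route as the paper's: the same exact-decrease identity $\frac{1}{2} \DW \mc{K}(\mb{w}, \mb{\xi})^{\ast} \mb{A}(\mb{\xi})^{-1} \DW \mc{K}(\mb{w}, \mb{\xi})$ for the full solve, the same expansion giving $\frac{1}{2} \norm{\mb{r}(\mb{\xi})}_{2}^{4} / (\mb{r}(\mb{\xi})^{\ast} \mb{A}(\mb{\xi}) \mb{r}(\mb{\xi}))$ for the steepest descent step, and the same Rayleigh-quotient conclusion. The only cosmetic differences are in the eigenvalue bound, where you estimate $\norm{\mc{R}(\mb{w}, \mb{\xi})}_{U} \leq \norm{\mb{w}}_{2} \norm{\mb{\phi}(\mb{\xi})}_{U, 2}$ directly via triangle inequality and Cauchy--Schwarz while the paper passes through the Gram matrix and its Frobenius norm (same constant either way), and your explicit handling of the degenerate case $\mb{r}(\mb{\xi}) = \mb{0}$, which the paper leaves implicit.
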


\begin{proof}
    See \app{energy-decrease}
\end{proof}

In the upper bound of the largest eigenvalue, the norm $\norm{\cdot}_{2, \ast}$ denotes dual norm associated to $\norm{\cdot}_{2}$, and $\norm{\cdot}_{U, 2}$ denotes the norm on $U^{\dl}$ defined by
$$\norm{\mb{\phi}(\mb{\xi})}_{U, 2}^{2} \doteq \sum_{i = 1}^{\dl} \norm{\mb{\phi}_{i}(\mb{\xi})}_{U}^{2}.$$

The local convergence of our algorithm will require the uniform boundedness of the constant in front of $\norm{\DW \mc{K}(\mb{w}, \mb{\xi})}_{2, \ast}^{2}$ in the energy decrease after the update of the linear parameters. In other words, it requires the uniform boundedness of $\norm{\mb{\phi}(\mb{\xi})}_{U, 2}$, as formalised in the following assumption. For later use, we also include the regularity of the basis functions in the same assumption.

\begin{assumption}
    \label{assumption:linear-boundedness-regularity}
    The basis functions are uniformly bounded and Hölder continuous in $\md{X}$; that is,
    \begin{enumerate}
        \item $\norm{\mb{\phi}}_{U, 2, \infty} \doteq \sup_{\mb{\xi} \in \md{X}} \norm{\mb{\phi}(\mb{\xi})}_{U, 2} < \infty$,
        \item there exist $L_{\mb{\phi}} > 0$ and $\nu \in \oc{0}{1}$ such that $\norm{\mb{\phi}(\mb{\xi}) - \mb{\phi}(\mb{\eta})}_{U, 2} \leq L_{\mb{\phi}} \norm{\mb{\xi} - \mb{\eta}}_{\md{X}}^{\nu}$ for all $\mb{\xi}, \mb{\eta} \in \md{X}$.
    \end{enumerate}
\end{assumption}

Under \assum{linear-boundedness-regularity}, the decrease condition \eqref{eq:decrease-cgsd} holds uniformly over $\md{X}$ with constant $\lambda_{\max}(\mb{A}(\mb{\xi}))^{-1} \geq \norm{a}_{U \times U}^{-1} \norm{\mb{\phi}}_{U, 2, \infty}^{-2}$. This ensures that each update of the linear parameters yields an energy decrease uniformly proportional to the squared norm of the energy gradient.

\begin{remark}
    \assum{linear-boundedness-regularity} does not generally hold for neural networks with standard activation functions such as $\tanh$ or ReLU. For example, the $H^{1}$ semi-norm of the function $x \mapsto \tanh(a x + b)$ on a bounded interval grows proportionally with $\abs{a}^{1/2}$, so the assumption can only hold if the parameter $a$ is uniformly bounded. This observation would motivate bounding the parameter space of a neural network.
\end{remark}

\begin{remark}
    More generally, the local convergence of \alg{optimiser} can be established for any update rule of the linear parameters that satisfies the following energy decrease condition: for all $\mb{\xi} \in \md{X}$, there exists $0 < \beta(\mb{\xi}) < \infty$ such that
    \begin{equation}
        \label{eq:decrease}
        \mc{K}(\mb{w}_{+}, \mb{\xi}) \leq \mc{K}(\mb{w}, \mb{\xi}) - \frac{1}{2} \beta(\mb{\xi})^{-1} \norm{\DW \mc{K}(\mb{w}, \mb{\xi})}_{2, \ast}^{2}
    \end{equation}
    for all $\mb{w} \in \md{W}$ and $\mb{w}_{+} = \texttt{UpdateLinear}(\mb{w}, \mb{\xi})$. This condition is stated in such a way that $\beta(\mb{\xi})$ has the same physical dimension as an eigenvalue of $\mb{A}(\mb{\xi})$. A uniform lower bound for $\beta(\mb{\xi})$ is easily obtained from \assum{spd}: the optimal update being $\best{\mb{w}}(\mb{\xi})$, we obtain
    \begin{align*}
        \mc{K}(\mb{w}_{+}, \mb{\xi}) - \mc{K}(\mb{w}, \mb{\xi}) & \geq \mc{K}(\best{\mb{w}}(\mb{\xi}), \mb{\xi}) - \mc{K}(\mb{w}, \mb{\xi})                                     \\
                                                                & = - \frac{1}{2} (\best{\mb{w}}(\mb{\xi}) - \mb{w})^{\ast} \mb{A}(\mb{\xi}) (\best{\mb{w}}(\mb{\xi}) - \mb{w}) \\
                                                                & = - \frac{1}{2} \DW \mc{K}(\mb{w}, \mb{\xi})^{\ast} \mb{A}(\mb{\xi})^{-1} \DW \mc{K}(\mb{w}, \mb{\xi})        \\
                                                                & \geq - \frac{1}{2} \lambda_{\min}(\mb{A}(\mb{\xi}))^{-1} \norm{\DW \mc{K}(\mb{w}, \mb{\xi})}_{2, \ast}^{2}.
    \end{align*}
    Combining this fact with \eqref{eq:decrease}, we find $\beta(\mb{\xi}) \geq \lambda_{\min}(\mb{A}(\mb{\xi}))$. Using \assum{spd}, we reach $\beta(\mb{\xi}) \geq \alpha \omega_{\min}$. As \lem{energy-decrease} showed, obtaining a uniform upper bound for $\beta(\mb{\xi})$ typically requires the uniform boundedness of $\mb{\phi}(\mb{\xi})$ in $U$.
\end{remark}

\subsubsection{Local convergence}

We recall the convergence properties of mirror descent. Although the results are classical, we state them under slightly relaxed assumptions to account for the convexity of the energy functional in the linear parameters and for the possibility that the set of global minimisers has positive measure (it may not consist of isolated points). Our analysis builds on and extends the basin-based convergence framework developed in~\cite{traonmilin2023basins}, which provides guarantees for gradient descent in non-convex optimisation. We adapt these ideas to the constrained setting, where the energy is minimised via mirror descent rather than standard gradient descent, by incorporating tools from constrained optimisation.

Standard analyses of gradient descent schemes typically require some form of gradient regularity, such as Lipschitz continuity~\cite[Chapter 1]{polyak1987introduction} or Hölder continuity~\cite{yashtini2016global}. Since the discrete energy is quadratic in the linear parameters, we only need to assume locally Hölder gradients with respect to the nonlinear parameters.

\renewcommand{\theassumption}{4a}

\begin{assumption}
    \label{assumption:regularity}
    For all $\mb{w} \in \md{W}$, the map $\md{X} \ni \mb{\xi} \mapsto \mc{K}(\mb{w}, \mb{\xi})$ has Hölder continuous gradients; that is, there exists $\nu \in \oc{0}{1}$ such that for each $\mb{w} \in \md{W}$, there exists $L(\mb{w}) > 0$ satisfying
    $$\norm{\DX \mc{K}(\mb{w}, \mb{\xi}) - \DX \mc{K}(\mb{w}, \mb{\eta})}_{\md{X}, \ast} \leq L(\mb{w}) \norm{\mb{\xi} - \mb{\eta}}_{\md{X}}^{\nu}$$
    for all $\mb{\xi}, \mb{\eta} \in \md{X}$.
\end{assumption}

Here $\norm{\cdot}_{\md{X}, \ast}$ denotes the dual norm induced by $\norm{\cdot}_{\md{X}}$.

\begin{remark}
    More general moduli of gradient continuity could be considered, such as log-Lipschitz regularity, Dini-continuity or other concave modulus functions. We restrict our analysis to Hölder conditions for simplicity, which cover most cases of practical interest.
\end{remark}

\renewcommand{\theassumption}{\arabic{assumption}}

To preserve the clarity of the exposition, we defer the discussion of sufficient conditions for \assum{regularity} to the end of the section. As shown in \lem{regularity-energy}, the local Hölder constant $L(\mb{w})$ needs to depend on $\mb{w}$, unless $\mb{w}$ is uniformly bounded.

In particular, \assum{regularity} implies that $\mc{K}$ is continuous. Since $\md{X}$ is closed under \assum{compact-convex} and $\md{W}$ is closed, the product space $\md{W} \times \md{X}$ is also closed. Given that $\mc{J}$ is also coercive, the Weierstrass theorem guarantees the existence of a global minimum of $\mc{K}$ in $\md{W} \times \md{X}$~\cite{ekeland1999convex}. Let $\mc{K}^{\star} \doteq \min_{\mb{w} \in \md{W}, \mb{\xi} \in \md{X}} \mc{K}(\mb{w}, \mb{\xi})$ denote the value of that minimum.

We now introduce common tools for constrained optimisation on convex sets. The \emph{normal cone} to $\md{X}$ at $\mb{\xi} \in \md{X}$ is defined as
$$N_{\md{X}}(\mb{\xi}) \doteq \{\mb{v} \in (\RR^{\dnl})^{\ast} \ : \ \forall \mb{\eta} \in \md{X}, \inner{\mb{v}}{\mb{\eta} - \mb{\xi}}_{\md{X}} \leq 0\}.$$
Normal cones are instrumental in expressing first-order necessary optimality conditions over convex sets: if $(\mb{w}, \mb{\xi}) \in \md{W} \times \md{X}$ is a local minimum of $\mc{K}$, then $\DW \mc{K}(\mb{w}, \mb{\xi}) = \mb{0}$ and $-\DX \mc{K}(\mb{w}, \mb{\xi}) \in N_{\md{X}}(\mb{\xi})$. This condition, however, is not sufficient for local minimality, since it also characterises saddle points and local maxima. Nevertheless, since $D_{\psi}$ is strongly convex, it does not have saddle point or local maxima so any $\mb{\xi}_{+} \in \proxmap{\mb{w}}{\mb{\xi}}{\gamma}$ is characterised by the first-order optimality condition
\begin{equation}
    \label{eq:def-prox}
    -\gamma \DX \mc{K}(\mb{w}, \mb{\xi}) - \nabla \psi(\mb{\xi}_{+}) + \nabla \psi(\mb{\xi}) \in N_{\md{X}}(\mb{\xi}_{+}).
\end{equation}

For convenience, we also define the \emph{gradient mapping} $G_{\psi}: \md{W} \times \md{X} \times \RR_{+} \to (\RR^{\dnl})^{\ast}$ via
$$\gradmap{\mb{w}}{\mb{\xi}}{\gamma} \doteq \gamma^{-1} R_{\md{X}}(\mb{\xi} - \texttt{UpdateNonlinear}(\mb{w}, \mb{\xi}, \gamma)),$$
where $R_{\md{X}}: \RR^{\dnl} \to (\RR^{\dnl})^{\ast}$ is the Riesz map defined by $\inner{R_{\md{X}} \mb{\xi}}{\mb{\eta}}_{\md{X}} = (\mb{\xi}, \mb{\eta})_{\md{X}}$ for all $\mb{\xi}, \mb{\eta} \in \RR^{\dnl}$. In this way, one can write
$$\texttt{UpdateNonlinear}(\mb{w}, \mb{\xi}, \gamma) = \mb{\xi} - \gamma R_{\md{X}}^{-1} \gradmap{\mb{w}}{\mb{\xi}}{\gamma},$$
where $R_{\md{X}}^{-1}: (\RR^{\dnl})^{\ast} \to \RR^{\dnl}$ is the inverse Riesz map. This formulation provides a consistent way to interpret the gradient mapping as a dual vector.

In particular, in the case of projected gradient descent, when the unconstrained step $\mb{\xi} - \gamma R_{\md{X}}^{-1} \DX \mc{K}(\mb{w}, \mb{\xi})$ remains inside $\md{X}$, the proximal map $\proxmap{\mb{w}}{\mb{\xi}}{\gamma}$ reduces to $\mb{\xi} - \gamma R_{\md{X}}^{-1} \DX \mc{K}(\mb{w}, \mb{\xi})$, and $\gradmap{\mb{w}}{\mb{\xi}}{\gamma} = \DX \mc{K}(\mb{w}, \mb{\xi})$. Thus, $G_{\psi}$ plays the role of an effective gradient, incorporating both the projection and step size effects.

We now show the local convergence of the mirror descent iterates to a quasi-minimiser.

\begin{lemma}[Local convergence]
    \label{lem:local-convergence}
    Let $\mb{w}, \mb{\xi} \in \md{W} \times \md{X}$ and $(\mb{w}_{+}, \mb{\xi}_{+}) \in \md{W} \times \md{X}$ denote the next iterate produced by \alg{optimiser} with step size $\gamma > 0$. Under \assum{compact-convex} and \assum{regularity}, for all $\epsilon > 0$, or $\epsilon = 0$ if $\nu = 1$, it holds
    $$\mc{K}(\mb{w}_{+}, \mb{\xi}_{+}) \leq \mc{K}(\mb{w}, \mb{\xi}) - \frac{1}{2} \gamma (2 \mu - \gamma L_{\nu, \epsilon}(\mb{w})) \norm{\gradmap{\mb{w}}{\mb{\xi}}{\gamma}}_{\md{X}, \ast}^{2} - \frac{1}{2} \beta(\mb{\xi})^{-1} \norm{\DW \mc{K}(\mb{w}, \mb{\xi}_{+})}_{\md{W}, \ast}^{2} + \epsilon,$$
    where $L_{\nu, \epsilon}(\mb{w}) \doteq \left(\frac{1}{2 \epsilon} \frac{1 - \nu}{1 + \nu}\right)^{\frac{1 - \nu}{1 + \nu}} L(\mb{w})^{\frac{2}{1 + \nu}}$ and $\beta(\mb{\xi}) \doteq \lambda_{\max}(\mb{A}(\mb{\xi}))$.

    Let $(\mb{w}_{0}, \mb{\xi}_{0}) \in \md{W} \times \md{X}$ denote some initial parameters and $(\mb{w}_{k}, \mb{\xi}_{k})_{k \geq 0}$ denote the sequence of parameters produced by \alg{optimiser} with step sizes $\gamma_{k} > 0$. Under the same assumptions, for all $n \geq 1$, it holds
    $$\min_{0 \leq k \leq n-1} \left\{\norm{\gradmap{\mb{w}_{k}}{\mb{\xi}_{k}}{\gamma_{k}}}_{\md{X}, \ast}^{2} + \norm{\DW \mc{K}(\mb{w}_{k}, \mb{\xi}_{k+1})}_{\md{W}, \ast}^{2}\right\} \leq \frac{2 (\mc{K}(\mb{w}_{0}, \mb{\xi}_{0}) - \mc{K}^{\star} + n \epsilon)}{S_{n}},$$
    where $S_{n} \doteq \sum_{k = 0}^{n-1} \min(\gamma_{k} (2 \mu - \gamma_{k} L_{\nu, \epsilon}(\mb{w}_{k})), \beta(\mb{\xi}_{k})^{-1})$ provided $S_{n} > 0$.
\end{lemma}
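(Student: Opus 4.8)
The plan is to prove the single-step inequality first and then telescope. In one iteration of \alg{optimiser} the energy drops for two reasons: the nonlinear mirror-descent update $\mb{\xi} \mapsto \mb{\xi}_{+}$ performed at fixed $\mb{w}$, and the subsequent linear update $\mb{w} \mapsto \mb{w}_{+}$ performed at fixed $\mb{\xi}_{+}$. For the linear step I would simply invoke \lem{energy-decrease} (equivalently its abstract form \eqref{eq:decrease}) at the pair $(\mb{w}, \mb{\xi}_{+})$, which supplies the term $-\frac{1}{2} \beta(\mb{\xi}_{+})^{-1} \norm{\DW \mc{K}(\mb{w}, \mb{\xi}_{+})}_{\md{W}, \ast}^{2}$, with $\beta(\cdot) = \lambda_{\max}(\mb{A}(\cdot))$ the constant appearing in the statement, evaluated at the nonlinear argument of the solve. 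The heart of the argument is therefore the nonlinear decrease $\mc{K}(\mb{w}, \mb{\xi}_{+}) - \mc{K}(\mb{w}, \mb{\xi})$; the two estimates are then added.

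For the nonlinear decrease I would first record a Hölder descent inequality for $\mb{\xi} \mapsto \mc{K}(\mb{w}, \mb{\xi})$. Since $\md{X}$ is convex by \assum{compact-convex}, the segment $[\mb{\xi}, \mb{\xi}_{+}]$ lies in $\md{X}$; integrating $\DX \mc{K}(\mb{w}, \cdot)$ along it and using the Hölder bound of \assum{regularity} gives
\[
\mc{K}(\mb{w}, \mb{\xi}_{+}) \leq \mc{K}(\mb{w}, \mb{\xi}) + \inner{\DX \mc{K}(\mb{w}, \mb{\xi})}{\mb{\xi}_{+} - \mb{\xi}}_{\md{X}} + \frac{L(\mb{w})}{1 + \nu} \norm{\mb{\xi}_{+} - \mb{\xi}}_{\md{X}}^{1 + \nu}.
\]
To bound the linear term I would use that $\mb{\xi}_{+} \in \proxmap{\mb{w}}{\mb{\xi}}{\gamma}$ is characterised by the normal-cone inclusion \eqref{eq:def-prox}. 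Testing that inclusion against $\mb{\eta} = \mb{\xi} \in \md{X}$ and rearranging yields $\gamma \inner{\DX \mc{K}(\mb{w}, \mb{\xi})}{\mb{\xi}_{+} - \mb{\xi}}_{\md{X}} \leq -\inner{\nabla \psi(\mb{\xi}_{+}) - \nabla \psi(\mb{\xi})}{\mb{\xi}_{+} - \mb{\xi}}_{\md{X}}$, and $\mu$-strong convexity of $\psi$ bounds the right-hand side by $-\mu \norm{\mb{\xi}_{+} - \mb{\xi}}_{\md{X}}^{2}$. Finally, from the definition of $\gradmap{\mb{w}}{\mb{\xi}}{\gamma}$ and the isometry property of the Riesz map $R_{\md{X}}$, one has $\norm{\mb{\xi}_{+} - \mb{\xi}}_{\md{X}} = \gamma \norm{\gradmap{\mb{w}}{\mb{\xi}}{\gamma}}_{\md{X}, \ast}$. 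Substituting these facts gives
\[
\mc{K}(\mb{w}, \mb{\xi}_{+}) \leq \mc{K}(\mb{w}, \mb{\xi}) - \mu \gamma \norm{\gradmap{\mb{w}}{\mb{\xi}}{\gamma}}_{\md{X}, \ast}^{2} + \frac{L(\mb{w})}{1 + \nu} \gamma^{1 + \nu} \norm{\gradmap{\mb{w}}{\mb{\xi}}{\gamma}}_{\md{X}, \ast}^{1 + \nu}.
\]

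It remains to absorb the $(1 + \nu)$-power term into a quadratic one at the cost of an additive $\epsilon$. Writing $a = \gamma \norm{\gradmap{\mb{w}}{\mb{\xi}}{\gamma}}_{\md{X}, \ast}$, I would apply Young's inequality to $\frac{L(\mb{w})}{1 + \nu} a^{1 + \nu}$ with the conjugate exponents $\frac{2}{1 + \nu}$ and $\frac{2}{1 - \nu}$, optimising the split so that the residual constant is exactly $\epsilon$. A one-variable maximisation of $a \mapsto \frac{L(\mb{w})}{1 + \nu} a^{1 + \nu} - \frac{1}{2} L_{\nu, \epsilon}(\mb{w}) a^{2}$ shows that the constant $L_{\nu, \epsilon}(\mb{w})$ in the statement is precisely the one for which $\frac{L(\mb{w})}{1 + \nu} a^{1 + \nu} \leq \frac{1}{2} L_{\nu, \epsilon}(\mb{w}) a^{2} + \epsilon$ holds for all $a \geq 0$; when $\nu = 1$ the term is already quadratic, $L_{1, 0}(\mb{w}) = L(\mb{w})$, and $\epsilon = 0$ is admissible. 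Collecting $-\mu \gamma + \frac{1}{2} \gamma^{2} L_{\nu, \epsilon}(\mb{w}) = -\frac{1}{2} \gamma (2\mu - \gamma L_{\nu, \epsilon}(\mb{w}))$ and adding the linear decrease from \lem{energy-decrease} yields the claimed one-step inequality.

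For the aggregate bound I would apply the one-step inequality at each $k$ with $(\mb{w}, \mb{\xi}) = (\mb{w}_{k}, \mb{\xi}_{k})$ and $\gamma = \gamma_{k}$, move both negative terms to the left, and lower-bound their two coefficients (nonnegative for admissible step sizes $\gamma_{k} \leq 2\mu / L_{\nu, \epsilon}(\mb{w}_{k})$) by $c_{k} \doteq \min(\gamma_{k}(2\mu - \gamma_{k} L_{\nu, \epsilon}(\mb{w}_{k})), \beta(\mb{\xi}_{k})^{-1})$ and the quantity $\norm{\gradmap{\mb{w}_{k}}{\mb{\xi}_{k}}{\gamma_{k}}}_{\md{X}, \ast}^{2} + \norm{\DW \mc{K}(\mb{w}_{k}, \mb{\xi}_{k+1})}_{\md{W}, \ast}^{2}$ by its minimum $m_{n}$ over $0 \leq k \leq n-1$. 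This gives $\frac{1}{2} c_{k} m_{n} \leq \mc{K}(\mb{w}_{k}, \mb{\xi}_{k}) - \mc{K}(\mb{w}_{k+1}, \mb{\xi}_{k+1}) + \epsilon$; summing over $k = 0, \dots, n-1$ telescopes the right-hand side to $\mc{K}(\mb{w}_{0}, \mb{\xi}_{0}) - \mc{K}(\mb{w}_{n}, \mb{\xi}_{n}) + n \epsilon \leq \mc{K}(\mb{w}_{0}, \mb{\xi}_{0}) - \mc{K}^{\star} + n \epsilon$, and dividing by $\frac{1}{2} S_{n}$ (legitimate once $S_{n} > 0$) delivers the stated bound. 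I expect the main obstacle to be the nonlinear decrease: correctly chaining the normal-cone condition \eqref{eq:def-prox}, the strong convexity of $\psi$, and the Hölder estimate while keeping the primal/dual bookkeeping through $R_{\md{X}}$ consistent, together with pinning down the exact Young constant $L_{\nu, \epsilon}(\mb{w})$. The telescoping is then routine.
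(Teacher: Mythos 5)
Your proposal is correct and follows essentially the same route as the paper's proof: the Hölder descent inequality smoothed via Young's inequality with exactly the constant $L_{\nu, \epsilon}(\mb{w})$, the prox optimality condition \eqref{eq:def-prox} tested at $\mb{\xi}$ combined with the $\mu$-strong convexity of $\psi$ and the Riesz-map identity $\norm{\mb{\xi}_{+} - \mb{\xi}}_{\md{X}} = \gamma \norm{\gradmap{\mb{w}}{\mb{\xi}}{\gamma}}_{\md{X}, \ast}$, the linear decrease from \lem{energy-decrease} applied at $(\mb{w}, \mb{\xi}_{+})$, and the same min-weighted telescoping. Your observation that the linear-step constant is naturally $\beta(\mb{\xi}_{+})$ rather than $\beta(\mb{\xi})$ is in fact what the paper's own application of \eqref{eq:decrease} yields, and your explicit flagging of the nonnegativity of the weights $\gamma_{k}(2\mu - \gamma_{k} L_{\nu, \epsilon}(\mb{w}_{k}))$ is, if anything, slightly more careful than the paper's "weighted mean" step.
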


\begin{proof}
    See \app{local-convergence}.
\end{proof}

In the case of Lipschitz gradients ($\nu = 1$), taking $\epsilon = 0$ ($L_{\nu, \epsilon} = L$) in \lem{local-convergence} shows that under a suitable step size, the value of the energy functional is non-increasing along the trajectory of \alg{optimiser}. In the more general setting where the gradient is only Hölder continuous, the energy functional need not decrease monotonically during \alg{optimiser}. Under \subassum{linear-boundedness-regularity}{1}, we have $\beta(\mb{\xi}) \leq \beta_{\max} \doteq \norm{a}_{U \times U} \norm{\mb{\phi}}_{U, 2, \infty}$. Using a step size $\gamma_{k} = \zeta \mu / L_{\nu, \epsilon}(\mb{w}_{k})$ for some $\zeta \in \oo{0}{2}$ and assuming $L(\mb{w}) \leq L_{\max}$, \lem{local-convergence} still provides
$$\min_{0 \leq k \leq n-1} \left\{\norm{\gradmap{\mb{w}_{k}}{\mb{\xi}_{k}}{\gamma_{k}}}_{\md{X}, \ast}^{2} + \norm{\DW \mc{K}(\mb{w}_{k}, \mb{\xi}_{k+1})}_{\md{W}, \ast}^{2}\right\} \leq \frac{2 (\mc{K}(\mb{w}_{0}, \mb{\xi}_{0}) - \mc{K}^{\star} + n \epsilon)}{n \min(\mu^{2} \zeta (2 - \zeta) L_{\nu, \epsilon, \max}^{-1}, \beta_{\max}^{-1})},$$
where $L_{\nu, \epsilon, \max} = \left(\frac{1}{2 \epsilon} \frac{1 - \nu}{1 + \nu}\right)^{\frac{1 - \nu}{1 + \nu}} L_{\max}^{\frac{2}{1 + \nu}}$. As $n \to \infty$, this upper bound converges to
$$\frac{2 \left(\frac{1}{2} \frac{1 - \nu}{1 + \nu}\right)^{\frac{1 - \nu}{1 + \nu}} L_{\max}^{\frac{2}{1 + \nu}} \epsilon^{\frac{2 \nu}{1 + \nu}}}{\min\left(\mu^{2} \zeta (2 - \zeta), \left(\frac{1}{2 \epsilon} \frac{1 - \nu}{1 + \nu}\right)^{\frac{1 - \nu}{1 + \nu}} L_{\max}^{\frac{2}{1 + \nu}} \beta_{\max}^{-1}\right)},$$
where we expanded the expression of $L_{\nu, \epsilon, \max}$ to show the dependence of this bound on $\epsilon$. As $\epsilon \to 0$, the denominator becomes $\mu^{2} \zeta (2 - \zeta)$, while the numerator goes to $0$, so this upper bound can be made arbitrarily small by decreasing $\epsilon$.

Importantly, the estimate of \lem{local-convergence} shows that it is not necessary to solve the linear system exactly at each iteration for \alg{optimiser} to converge. Any linear update that ensures the energy decrease condition in~\eqref{eq:decrease} suffices to guarantee convergence.

\begin{remark}
    The proof of \lem{local-convergence} fundamentally relies on the fact that a gradient descent step decreases the energy whenever the gradient is nonzero. More precisely, the mirror descent update \eqref{eq:nonlinear-step} satisfies
    $$\mc{K}(\mb{w}, \mb{\xi}_{+}) \leq \mc{K}(\mb{w}, \mb{\xi}) - \frac{1}{2} \gamma (2 \mu - \gamma L_{\nu, \epsilon}(\mb{w})) \norm{\gradmap{\mb{w}}{\mb{\xi}}{\gamma}}_{\md{X}, \ast}^{2} + \epsilon.$$
    The convergence proof extends directly to any update rule for the nonlinear parameters that guarantees a similar energy decrease.
\end{remark}

\subsection{Identification of quasi-stationary points}

In practical settings, reaching an exact stationary point is often too costly or unnecessary in light of numerical error and diminishing returns, and it is often more reasonable to look for a quasi-stationary point.

Given $\epsilon > 0$, we say that $(\mb{w}, \mb{\xi}) \in \md{W} \times \md{X}$ is an \emph{$\epsilon$-quasi-stationary point} if $\norm{\DW \mc{K}(\mb{w}, \mb{\xi})}_{2, \ast} \leq \epsilon$ and $-\DX \mc{K}(\mb{w}, \mb{\xi}) \in N_{\md{X}}(\mb{\xi}) + \overline{B}_{\md{X}, \ast}(0, \epsilon)$, where
$$N_{\md{X}}(\mb{\xi}) + \overline{B}_{\md{X}, \ast}(0, \epsilon) \doteq \{\mb{v} \in (\RR^{p})^{\ast} \ : \ \exists \mb{w} \in N_{\md{X}}(\mb{\xi}), \norm{\mb{v} - \mb{w}}_{\md{X}, \ast} \leq \epsilon\}$$
is an $\epsilon$-neighbourhood of $N_{\md{X}}(\mb{\xi})$. The following result shows that the gradient map acts as a surrogate for quasi-stationarity.

\begin{lemma}[Surrogate for quasi-stationarity]
    \label{lem:surrogate}
    Under \assum{compact-convex}, let $(\mb{w}, \mb{\xi}) \in \md{W} \times \md{X}$, and $\mb{\xi}_{+}$ denote the next iterate produced by \alg{optimiser} with step $\gamma > 0$. Under \assum{regularity}, $(\mb{w}, \mb{\xi}_{+})$ is a $(L (\gamma c)^{\nu} + \mu c)$-quasi stationary point, where $c^{2} = \norm{\gradmap{\mb{w}}{\mb{\xi}}{\gamma}}_{\md{X}, \ast}^{2} + \norm{\DW \mc{K}(\mb{w}, \mb{\xi}_{+})}_{\md{W}, \ast}^{2}$.
\end{lemma}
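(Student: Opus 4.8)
The plan is to read off quasi-stationarity of $(\mb{w}, \mb{\xi}_+)$ directly from the first-order optimality condition \eqref{eq:def-prox} characterising the mirror step, perturbing it from the base point $\mb{\xi}$ to the new point $\mb{\xi}_+$ and controlling the perturbation by \assum{regularity} and by the step length. The recurring quantity is $\norm{\mb{\xi} - \mb{\xi}_+}_{\md{X}}$: from the definition of the gradient mapping, $\mb{\xi} - \mb{\xi}_+ = \gamma R_{\md{X}}^{-1} \gradmap{\mb{w}}{\mb{\xi}}{\gamma}$, and since $R_{\md{X}}$ is an isometry onto the dual, $\norm{\mb{\xi} - \mb{\xi}_+}_{\md{X}} = \gamma \norm{\gradmap{\mb{w}}{\mb{\xi}}{\gamma}}_{\md{X}, \ast} \leq \gamma c$, using $\norm{\gradmap{\mb{w}}{\mb{\xi}}{\gamma}}_{\md{X}, \ast} \leq c$. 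The linear condition is then the easy half: by definition of $c$, $\norm{\DW \mc{K}(\mb{w}, \mb{\xi}_+)}_{2, \ast} \leq c \leq \epsilon$, where $\epsilon \doteq L (\gamma c)^{\nu} + \mu c$ and $L \doteq L(\mb{w})$ (the last inequality holds under the natural normalisation $\mu \geq 1$, as for projected gradient descent).

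For the nonlinear condition I would start from \eqref{eq:def-prox}, divide by $\gamma > 0$, and use that $N_{\md{X}}(\mb{\xi}_+)$ is a cone to obtain
$$-\DX \mc{K}(\mb{w}, \mb{\xi}) + \gamma^{-1}\left(\nabla \psi(\mb{\xi}) - \nabla \psi(\mb{\xi}_+)\right) \in N_{\md{X}}(\mb{\xi}_+).$$
Subtracting this admissible normal vector from $-\DX \mc{K}(\mb{w}, \mb{\xi}_+)$ shows that the distance of $-\DX \mc{K}(\mb{w}, \mb{\xi}_+)$ to $N_{\md{X}}(\mb{\xi}_+)$ is at most
$$\norm{\DX \mc{K}(\mb{w}, \mb{\xi}) - \DX \mc{K}(\mb{w}, \mb{\xi}_+)}_{\md{X}, \ast} + \gamma^{-1} \norm{\nabla \psi(\mb{\xi}) - \nabla \psi(\mb{\xi}_+)}_{\md{X}, \ast}.$$
By \assum{regularity} and the length estimate, the first term is at most $L \norm{\mb{\xi} - \mb{\xi}_+}_{\md{X}}^{\nu} \leq L (\gamma c)^{\nu}$.

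The second term is the crux and the main obstacle. Strong convexity of $\psi$ only lower-bounds $\norm{\nabla \psi(\mb{\xi}) - \nabla \psi(\mb{\xi}_+)}_{\md{X}, \ast}$, so the matching upper bound $\mu \norm{\mb{\xi} - \mb{\xi}_+}_{\md{X}}$ must come from the structure of the distance-generating function, not from strong convexity alone. For the canonical choice $\psi = \frac{\mu}{2} \norm{\cdot}_{\md{X}}^{2}$ one computes $\nabla \psi(\mb{\xi}) - \nabla \psi(\mb{\xi}_+) = \mu R_{\md{X}}(\mb{\xi} - \mb{\xi}_+) = \mu \gamma \gradmap{\mb{w}}{\mb{\xi}}{\gamma}$, whence the second term equals exactly $\mu \norm{\gradmap{\mb{w}}{\mb{\xi}}{\gamma}}_{\md{X}, \ast} \leq \mu c$; in general it is the variation of $\nabla \psi$ along the segment $[\mb{\xi}, \mb{\xi}_+]$, which I would bound by $\mu \norm{\mb{\xi} - \mb{\xi}_+}_{\md{X}} \leq \mu \gamma c$ and hence obtain the contribution $\mu c$ after dividing by $\gamma$. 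Adding the two contributions gives distance $\leq L (\gamma c)^{\nu} + \mu c = \epsilon$, that is, $-\DX \mc{K}(\mb{w}, \mb{\xi}_+) \in N_{\md{X}}(\mb{\xi}_+) + \overline{B}_{\md{X}, \ast}(0, \epsilon)$. Together with the linear bound, this is exactly $\epsilon$-quasi-stationarity of $(\mb{w}, \mb{\xi}_+)$. I would make the dependence on the smoothness of $\psi$ explicit rather than hiding it behind strong convexity, since that is where the $\mu c$ genuinely originates.
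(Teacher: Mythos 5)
Your proof is correct and follows essentially the same route as the paper's: both start from the prox optimality condition \eqref{eq:def-prox}, divide by $\gamma$ (using that $N_{\md{X}}(\mb{\xi}_{+})$ is a cone), add and subtract $\DX \mc{K}(\mb{w}, \mb{\xi}_{+})$, bound the Hölder term by $L (\gamma c)^{\nu}$ via $\norm{\mb{\xi}_{+} - \mb{\xi}}_{\md{X}} = \gamma \norm{\gradmap{\mb{w}}{\mb{\xi}}{\gamma}}_{\md{X}, \ast} \leq \gamma c$, and bound the $\nabla \psi$-variation by $\mu c$. The crux you flag is genuine and to your credit: the paper's own proof writes $\gamma^{-1} \norm{\nabla \psi(\mb{\xi}_{+}) - \nabla \psi(\mb{\xi})}_{\md{X}, \ast} \leq \mu \gamma^{-1} \norm{\mb{\xi}_{+} - \mb{\xi}}_{\md{X}}$ without comment, which silently uses $\mu$-smoothness of $\psi$ (strong convexity gives only the reverse inequality) and is exact only for the quadratic generating function of projected gradient descent — and likewise your observation that the linear condition $\norm{\DW \mc{K}(\mb{w}, \mb{\xi}_{+})}_{\md{W}, \ast} \leq c \leq L (\gamma c)^{\nu} + \mu c$ needs $\mu \geq 1$ (or a similar normalisation) is a caveat the paper leaves implicit as well.
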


\begin{proof}
    See \app{surrogate}.
\end{proof}

In other words, \lem{surrogate} states that the condition
$$\norm{\gradmap{\mb{w}}{\mb{\xi}}{\gamma}}_{\md{X}, \ast}^{2} + \norm{\DW \mc{K}(\mb{w}, \mb{\xi}_{+})}_{\md{W}, \ast}^{2} \leq \epsilon^{2}$$
can serve as a termination criterion to find a quasi-stationary point during the minimisation process. This termination criterion is equivalent to
$$\gamma^{-2} \norm{\mb{\xi}_{+} - \mb{\xi}}_{\md{X}}^{2} + \norm{\DW \mc{K}(\mb{w}, \mb{\xi}_{+})}_{\md{W}, \ast}^{2} \leq \epsilon^{2}.$$
Since a final linear solve is performed at the end of the optimisation process (\alg{optimiser}, line 13), we do not need to check the stabilisation of the linear parameters. In that case, the stopping criterion becomes $\norm{\mb{\xi}_{+} - \mb{\xi}}_{\md{X}} \leq \epsilon \gamma$, as implemented in \alg{optimiser} with $\epsilon_{\md{X}} = \epsilon \gamma$.

We now combine \lem{local-convergence} and \lem{surrogate} to estimate the number of steps to find a quasi-stationary point in the Lipschitz case ($\nu = 1$, $\epsilon = 0$). Running \alg{optimiser} with step size $\gamma_{k} = \zeta \mu / L_{\nu, \epsilon}(\mb{w}_{k})$ for some $\zeta \in \oo{0}{2}$ and assuming $L(\mb{w}) \leq L_{\max}$ and $\beta(\mb{\xi}) \leq \beta_{\max}$ (\subassum{linear-boundedness-regularity}{1}) will produce an iterate $(\mb{w}, \mb{\xi})$ such that $\norm{\gradmap{\mb{w}}{\mb{\xi}}{\gamma}}_{\md{X}, \ast}^{2} + \norm{\DW \mc{K}(\mb{w}, \mb{\xi}_{+})}_{\md{W}, \ast}^{2} \leq c^{2}$ after at most $2 (\mc{K}(\mb{w}_{0}, \mb{\xi}_{0}) - \mc{K}^{\star}) / [c^{2} \min(\mu^{2} \zeta (2 - \zeta) L_{\max}^{-1}, \beta_{\max}^{-1})]$ iterations. Given a tolerance $\tau > 0$, choose $c > 0$ such that $\gamma L_{\max} c + \mu c = \tau$. Then a $\tau$-quasi-stationary point is guaranteed after at most
$$E(\tau) = \frac{2 \mu^{2} (1 + \zeta)^{2} (\mc{K}(\mb{w}_{0}, \mb{\xi}_{0}) - \mc{K}^{\star})}{\tau^{2} \min(\mu^{2} \zeta (2 - \zeta) L_{\max}^{-1}, \beta_{\max}^{-1})}$$
iterations. This number of iterations is proportional to $2 (\mc{K}(\mb{w}_{0}, \mb{\xi}_{0}) - \mc{K}^{\star})$, which is equal to $\norm{\mc{R}(\mb{w}_{0}, \mb{\xi}_{0}) - \mc{R}^{\star}}_{a}^{2}$, the squared energy distance between the initial realisation and a global minimiser $\mc{R}^{\star}$ of $\mc{J}$ in $V$. The prefactor depending on $\zeta$ is minimal for the choice
$$\zeta = \begin{cases}
        1 - \sqrt{1 - \frac{L_{\max}}{\mu^{2} \beta_{\max}}} & \text{if } \frac{L_{\max}}{\mu^{2} \beta_{\max}} \leq \frac{3}{4}, \\
        \frac{1}{2}                                          & \text{otherwise}.
    \end{cases}$$

\begin{remark}
    Our analysis highlights the critical role of the Lipschitz constant of the gradient of the discrete energy functional, as it directly constrains the maximum step size in \alg{optimiser}. A large Lipschitz constant necessitates smaller steps, potentially slowing down progress towards the minimiser.

    This limitation is structural to first-order methods based on fixed metrics. However, it may be alleviated through more advanced optimisation techniques that incorporate curvature information. For instance, natural gradient methods~\cite{amari1998natural, nurbekyan2023efficient} or its low-rank approximations~\cite{bioli2025accelerating} adapt the step direction using the inverse of the metric tensor induced by an inner product of the embedding space $U$ (for example, the one induced by the bilinear form of the variational problem), while quasi-Newton approaches approximate second-order information to adjust step sizes dynamically. Such strategies could lead to stronger convergence guarantees under less restrictive conditions on the step size, especially in settings where the gradient of the discrete energy functional has a large Lipschitz constant.
\end{remark}

\subsection{Global convergence of projected gradient descent}

We now turn to global convergence guarantees for \alg{optimiser}, focusing on the case where the reduced energy functional admits Lipschitz continuous gradients. This is the case whenever both \assum{spd}, \assum{linear-boundedness-regularity} and \assum{regularity} hold. For clarity of exposition, we postpone the verification of this sufficient condition to the next subsection.

A notable difficulty in the analysis of the basins of attraction of discrete energies defined on nonlinear approximation spaces is related to the lack of injectivity of the realisation map. Indeed, there may exist $v \in V$ such that the inverse image $\mc{R}^{-1}(\{v\})$ is not a set of isolated points. Consequently, the set of global minimisers of $\mc{K}$ may have a positive measure in $\md{W} \times \md{X}$, and $\mc{K}$ may fail to be convex in a neighbourhood of one of its global minimisers.

We will prove the existence of basins of attraction for the reduced energy functional under the assumption of directional convexity in a neighbourhood of its global minimisers.

To define directional convexity, we introduce a few notations and definitions. Using \assum{spd}, any pair $(\mb{w}, \mb{\xi}) \in \md{W} \times \md{X}$ satisfying $\mc{K}(\mb{w}, \mb{\xi}) = \mc{K}^{\star}$ must be such that $\mb{w} = \best{\mb{w}}(\mb{\xi})$. It follows that the reduced energy $\red{\mc{K}}$ also attains its minimum on $\md{X}$, and that this minimum agrees with $\mc{K}^{\star}$. We define the set of \emph{best nonlinear parameters} as
$$\md{X}^{\star} \doteq \{\mb{\xi} \in \md{X} \ : \ \red{\mc{K}}(\mb{\xi}) = \mc{K}^{\star}\},$$
the \emph{distance to the best nonlinear parameters},
$$\delta_{\psi}^{\star}: \md{X} \to \RR_{+}, \qquad \delta_{\psi}^{\star}(\mb{\xi}) \doteq \inf_{\mb{\xi}^{\star} \in \md{X}^{\star}} D_{\psi}(\mb{\xi}^{\star}; \mb{\xi}),$$
and the set-valued \emph{projection onto $\md{X}^{\star}$},
$$\Pi_{\psi}^{\star}: \md{X} \rightrightarrows \md{X}^{\star}, \qquad \Pi_{\psi}^{\star}(\mb{\xi}) \doteq \arg \inf_{\mb{\xi}^{\star} \in \md{X}^{\star}} D_{\psi}(\mb{\xi}^{\star}; \mb{\xi}).$$
Directional convexity is defined as follows.

\begin{definition}[Directional convexity]
    We say that $\red{\mc{K}}$ is \emph{directionally convex} in a subset $S \subset \md{X}$ if for all $\mb{\xi} \in S$, there exists $\mb{\xi}^{\star} \in \Pi_{\psi}^{\star}(\mb{\xi})$ such that $\red{\mc{K}}$ is convex on the segment $[\mb{\xi}, \mb{\xi}^{\star}]$.
\end{definition}

Directional convexity is weaker than standard convexity. For example, the function $f: \RR^{2} \ni (x, y) \mapsto (x^{2} + y^{2} - 1)^{2}$ reaches its minimum on the unit circle. For the generating function $\psi(x, y) = \norm{(x, y)}_{2}^{2} / 2$, one can check that $\Pi_{\psi}^{\star}(x, y) = (x, y) / \norm{(x, y)}_{2}$, and $f$ is directionally convex in the region $\{(x, y) \in \RR^{2}, x^{2} + y^{2} \geq 1/3\}$. However, $f$ is convex only in the region $\{(x, y) \in \RR^{2}, x^{2} + y^{2} \geq 1\}$.

We state a global convergence result for the iterates of \alg{optimiser} under the following directional convexity assumption.
\begin{assumption}
    \label{assumption:directional-convexity}
    There exists $\rho > 0$ such that $\red{\mc{K}}$ is directionally convex in the neighbourhood
    $$\Lambda_{\psi}^{\star}(\rho) \doteq \{\mb{\xi} \in \md{X} \ : \ \delta_{\psi}^{\star}(\mb{\xi}) \leq \rho\}.$$
\end{assumption}
Noticing that $D_{\psi}^{\star}(\mb{\xi}) = 0$ if and only if $\mb{\xi} \in \md{X}^{\star}$, the neighbourhood $\Lambda_{\psi}^{\star}(\rho)$ corresponds to the subset of nonlinear parameters that are at distance at most $\rho$ from $\md{X}^{\star}$.

\begin{lemma}[Global convergence]
    \label{lem:global-convergence}
    Under \assum{compact-convex}, \assum{directional-convexity} and assuming that $\red{\mc{K}}$ is Lipschitz continuous with constant $\red{L} > 0$, let $\mb{\xi}_{0} \in \Lambda_{\psi}^{\star}(\rho)$ and define the sequence $(\mb{\xi}_{k})_{k \geq 0}$ by running \alg{optimiser} with exact updates of the linear parameters, and step sizes $(\gamma_{k})_{k \geq 0}$ satisfying $\gamma_{k} \leq \mu / \red{L}$ for all $k \geq 0$. Then
    \begin{itemize}
        \item The distance to the best nonlinear parameters is non-increasing: $\delta_{\psi}^{\star}(\mb{\xi}_{k+1}) \leq \delta_{\psi}^{\star}(\mb{\xi}_{k})$ for all $k \geq 0$ so that $\mb{\xi}_{k} \in \Lambda_{\psi}^{\star}(\delta_{\psi}^{\star}(\mb{\xi}_{0}))$ for all $k \geq 0$, and
        \item The reduced energy satisfies the bound
              $$\red{\mc{K}}(\mb{\xi}_{n}) \leq \mc{K}^{\star} + \frac{\delta_{\psi}^{\star}(\mb{\xi}_{0})}{\sum_{k = 0}^{n-1} \gamma_{k}}$$
              for all $n \geq 1$.
    \end{itemize}
\end{lemma}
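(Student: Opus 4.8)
The plan is to exploit that, with exact updates of the linear parameters, line~5 of \alg{optimiser} sets $\mb{w}_{k} = \best{\mb{w}}(\mb{\xi}_{k})$, so that the nonlinear step reduces to mirror descent on the reduced energy: writing $\mb{g}_{k} \doteq \DX \mc{K}(\best{\mb{w}}(\mb{\xi}_{k}), \mb{\xi}_{k}) = \nabla \red{\mc{K}}(\mb{\xi}_{k})$, the iterate $\mb{\xi}_{k+1} \in \proxmap{\mb{w}_{k}}{\mb{\xi}_{k}}{\gamma_{k}}$ obeys the first-order condition \eqref{eq:def-prox}. Testing that condition against an arbitrary $\mb{\eta} \in \md{X}$ and invoking the three-point identity for the Bregman divergence, namely $\inner{\nabla \psi(\mb{\xi}_{k}) - \nabla \psi(\mb{\xi}_{k+1})}{\mb{\eta} - \mb{\xi}_{k+1}}_{\md{X}} = D_{\psi}(\mb{\eta}; \mb{\xi}_{k+1}) + D_{\psi}(\mb{\xi}_{k+1}; \mb{\xi}_{k}) - D_{\psi}(\mb{\eta}; \mb{\xi}_{k})$, turns the optimality condition into
$$D_{\psi}(\mb{\eta}; \mb{\xi}_{k+1}) \leq D_{\psi}(\mb{\eta}; \mb{\xi}_{k}) - D_{\psi}(\mb{\xi}_{k+1}; \mb{\xi}_{k}) + \gamma_{k} \inner{\mb{g}_{k}}{\mb{\eta} - \mb{\xi}_{k+1}}_{\md{X}}.$$

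The next step is to choose $\mb{\eta} = \mb{\xi}_{k}^{\star}$, a point of $\Pi_{\psi}^{\star}(\mb{\xi}_{k})$ realising directional convexity on the segment $[\mb{\xi}_{k}, \mb{\xi}_{k}^{\star}]$, and to split $\inner{\mb{g}_{k}}{\mb{\xi}_{k}^{\star} - \mb{\xi}_{k+1}}_{\md{X}} = \inner{\mb{g}_{k}}{\mb{\xi}_{k}^{\star} - \mb{\xi}_{k}}_{\md{X}} + \inner{\mb{g}_{k}}{\mb{\xi}_{k} - \mb{\xi}_{k+1}}_{\md{X}}$. The first term is controlled by the convexity tangent inequality along the segment, giving $\inner{\mb{g}_{k}}{\mb{\xi}_{k}^{\star} - \mb{\xi}_{k}}_{\md{X}} \leq \red{\mc{K}}(\mb{\xi}_{k}^{\star}) - \red{\mc{K}}(\mb{\xi}_{k}) = \mc{K}^{\star} - \red{\mc{K}}(\mb{\xi}_{k})$; the second is controlled by the descent lemma for the $\red{L}$-Lipschitz gradient, $\inner{\mb{g}_{k}}{\mb{\xi}_{k} - \mb{\xi}_{k+1}}_{\md{X}} \leq \red{\mc{K}}(\mb{\xi}_{k}) - \red{\mc{K}}(\mb{\xi}_{k+1}) + \tfrac{1}{2} \red{L} \norm{\mb{\xi}_{k+1} - \mb{\xi}_{k}}_{\md{X}}^{2}$. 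Lower-bounding $D_{\psi}(\mb{\xi}_{k+1}; \mb{\xi}_{k}) \geq \tfrac{1}{2} \mu \norm{\mb{\xi}_{k+1} - \mb{\xi}_{k}}_{\md{X}}^{2}$ by $\mu$-strong convexity of $\psi$ and using $\gamma_{k} \leq \mu / \red{L}$ to make the residual quadratic coefficient $\tfrac{1}{2}(\gamma_{k} \red{L} - \mu)$ nonpositive, all $\norm{\mb{\xi}_{k+1} - \mb{\xi}_{k}}_{\md{X}}^{2}$ terms cancel favourably and the estimate collapses to the core recursion
$$D_{\psi}(\mb{\xi}_{k}^{\star}; \mb{\xi}_{k+1}) \leq D_{\psi}(\mb{\xi}_{k}^{\star}; \mb{\xi}_{k}) + \gamma_{k} (\mc{K}^{\star} - \red{\mc{K}}(\mb{\xi}_{k+1})).$$

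From this recursion both claims follow. For the first, since $\red{\mc{K}}(\mb{\xi}_{k+1}) \geq \mc{K}^{\star}$ the added term is nonpositive, and since $\mb{\xi}_{k}^{\star}$ realises the projection we have $D_{\psi}(\mb{\xi}_{k}^{\star}; \mb{\xi}_{k}) = \delta_{\psi}^{\star}(\mb{\xi}_{k})$ while $D_{\psi}(\mb{\xi}_{k}^{\star}; \mb{\xi}_{k+1}) \geq \delta_{\psi}^{\star}(\mb{\xi}_{k+1})$ by definition of the infimum; chaining these yields $\delta_{\psi}^{\star}(\mb{\xi}_{k+1}) \leq \delta_{\psi}^{\star}(\mb{\xi}_{k})$. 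An induction then keeps every iterate inside $\Lambda_{\psi}^{\star}(\rho)$, which is exactly what is needed for \assum{directional-convexity} to be applicable at each step. For the second, the same recursion rearranges to $\gamma_{k}(\red{\mc{K}}(\mb{\xi}_{k+1}) - \mc{K}^{\star}) \leq \delta_{\psi}^{\star}(\mb{\xi}_{k}) - \delta_{\psi}^{\star}(\mb{\xi}_{k+1})$, which telescopes over $k = 0, \ldots, n-1$ to $\sum_{k=0}^{n-1} \gamma_{k}(\red{\mc{K}}(\mb{\xi}_{k+1}) - \mc{K}^{\star}) \leq \delta_{\psi}^{\star}(\mb{\xi}_{0})$. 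To turn the left-hand side into a bound on $\red{\mc{K}}(\mb{\xi}_{n})$ alone, I would separately show that $\red{\mc{K}}$ is non-increasing along the iterates: testing \eqref{eq:def-prox} against $\mb{\eta} = \mb{\xi}_{k}$ and using strong convexity gives $\inner{\mb{g}_{k}}{\mb{\xi}_{k+1} - \mb{\xi}_{k}}_{\md{X}} \leq -\gamma_{k}^{-1} \mu \norm{\mb{\xi}_{k+1} - \mb{\xi}_{k}}_{\md{X}}^{2}$, which together with the descent lemma and $\gamma_{k} \leq \mu / \red{L}$ forces $\red{\mc{K}}(\mb{\xi}_{k+1}) \leq \red{\mc{K}}(\mb{\xi}_{k})$. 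Monotonicity then lets me lower-bound each summand by $\gamma_{k}(\red{\mc{K}}(\mb{\xi}_{n}) - \mc{K}^{\star})$, delivering the stated bound after dividing by $\sum_{k=0}^{n-1} \gamma_{k}$.

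The main obstacle I anticipate is the bookkeeping of the $k$-dependent projection point $\mb{\xi}_{k}^{\star}$: the telescoping works only because the two Bregman terms are handled asymmetrically, using the exact value $D_{\psi}(\mb{\xi}_{k}^{\star}; \mb{\xi}_{k}) = \delta_{\psi}^{\star}(\mb{\xi}_{k})$ at the iterate where $\mb{\xi}_{k}^{\star}$ is the genuine projection, but only the infimum bound $D_{\psi}(\mb{\xi}_{k}^{\star}; \mb{\xi}_{k+1}) \geq \delta_{\psi}^{\star}(\mb{\xi}_{k+1})$ at the next iterate, where $\mb{\xi}_{k}^{\star}$ need not be optimal. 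A second delicate point is that directional convexity is assumed only on $\Lambda_{\psi}^{\star}(\rho)$, so the argument must be phrased inductively, establishing the monotone decrease of $\delta_{\psi}^{\star}$ simultaneously with membership in $\Lambda_{\psi}^{\star}(\rho)$ rather than invoking the convexity inequality globally. The remaining manipulations—the three-point identity, the descent lemma, and the strong-convexity estimate—are routine.
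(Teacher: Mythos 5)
Your proposal is correct and follows essentially the same route as the paper's proof: the prox optimality condition \eqref{eq:def-prox} combined with the Bregman three-point identity, directional convexity along the segment to a projection point $\mb{\xi}_{k}^{\star} \in \Pi_{\psi}^{\star}(\mb{\xi}_{k})$, the descent lemma under $\gamma_{k} \leq \mu / \red{L}$, induction to keep the iterates in $\Lambda_{\psi}^{\star}(\rho)$, and telescoping combined with monotonicity of $\red{\mc{K}}$ along the iterates (which the paper obtains by citing \lem{local-convergence} rather than re-deriving it as you do). Your asymmetric bookkeeping of the $k$-dependent projection point --- the exact value $D_{\psi}(\mb{\xi}_{k}^{\star}; \mb{\xi}_{k}) = \delta_{\psi}^{\star}(\mb{\xi}_{k})$ at the current iterate versus the infimum bound $D_{\psi}(\mb{\xi}_{k}^{\star}; \mb{\xi}_{k+1}) \geq \delta_{\psi}^{\star}(\mb{\xi}_{k+1})$ at the next --- is precisely what the paper's argument relies on (its line reading ``the inequality $\delta_{\psi}^{\star}(\mb{\xi}_{+}) = D_{\psi}(\mb{\xi}^{\star}; \mb{\xi}_{+})$'' is evidently a typo for the inequality you state).
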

\begin{proof}
    See \app{global-convergence}.
\end{proof}

Under suitable assumptions on the energy functional, \lem{global-convergence} guarantees the stability of mirror descent iterates within a neighbourhood of the global minimisers. In particular, the distance to the best nonlinear parameters decreases monotonically along the trajectory, and the reduced energy converges to the global minimum. These properties define a \emph{basin of attraction} for the global minimisers under the mirror descent dynamics.

\begin{remark}
    Our proof of \lem{global-convergence} remains valid for more general star-shaped neighbourhoods $S$ of $\md{X}^{\star}$ satisfying the following property: if $\mb{\xi} \in S$ and $\mb{\eta} \in \md{X}$ is such that $\delta_{\psi}^{\star}(\mb{\eta}) \leq \delta_{\psi}^{\star}(\mb{\xi})$, then $\mb{\eta} \in S$. In particular, $S$ can be of the type $S = \{\mb{\xi} \in \md{X} : \exists \mb{\xi}^{\star} \in \md{X}^{\star}, D_{\psi}(\mb{\xi}^{\star}; \mb{\xi}) \leq \rho(\mb{\xi}^{\star})\}$, where $\mb{\xi}^{\star} \mapsto \rho(\mb{\xi}^{\star}) > 0$ is some parameter-dependent radius.
\end{remark}

\begin{remark}
    Various generalisations of convexity can be used to derive global convergence guarantees for mirror descent. We refer to \cite{karimi2016linear} for a comprehensive discussion and precise definitions of conditions such as (weak) strong convexity, the restricted secant inequality, the \acf{pl} inequality, and quadratic growth. Among these, the \ac{pl} condition is particularly appealing in practice, as it guarantees global convergence without requiring convexity, by ensuring that the gradient norm controls the suboptimality; that is, the difference between the current energy value and the global minimum. In this work, we focus on directional convexity as a structural assumption tailored to our problem setting, but similar arguments could be developed under other gradient growth conditions like \ac{pl}.
\end{remark}

\begin{remark}
    Even when the initialisation lies outside the basin of attraction, global convergence may still be achieved if the iterates eventually enter the basin. Once this occurs, the stability properties of the basin guarantee that all subsequent iterates remain within it, leading to global convergence. For instance, it is shown in~\cite{jin2017escape} that under the assumption of a Lipschitz-continuous Hessian, a perturbed gradient descent algorithm can escape strict saddle points and enter a basin of attraction within $O(\epsilon^{-2})$ iterations, matching the order of complexity we derived above.
\end{remark}

In our case, the energy functional is directly related to the distance to the global minimiser $u^{\star}$ of $\mc{J}$ (in the energy norm) via the relation
$$\red{\mc{K}}(\mb{\xi}) = \mc{J}(\red{\mc{R}}(\mb{\xi})) = \mc{J}(u^{\star}) + \frac{1}{2} \norm{\red{\mc{R}}(\mb{\xi}) - u^{\star}}_{a}^{2}.$$
The following corollary is a direct consequence of \lem{global-convergence}.
\begin{corollary}
    \label{corol:nonlinear-cea}
    With the same assumptions and notations as in \lem{global-convergence}, choosing step sizes $\gamma_{k} = \zeta \mu / \red{L}$ for some $\zeta \in \oc{0}{1}$, it holds
    $$\norm{\red{\mc{R}}(\mb{\xi}_{n}) - u^{\star}}_{a}^{2} \leq \inf_{v \in V} \norm{v - u^{\star}}_{a}^{2} + \frac{2 \red{L} \delta_{\psi}^{\star}(\mb{\xi}_{0})}{\zeta \mu n},$$
    for all $n \geq 1$.
\end{corollary}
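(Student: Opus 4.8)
The plan is to chain the abstract reduced-energy bound from \lem{global-convergence} with the energy-gap identity \eqref{eq:energy-gap}, thereby converting a statement about $\red{\mc{K}}$ into one about the energy-norm distance of the iterates to the exact solution $u^{\star}$. The corollary is essentially a specialisation of the lemma, so the work lies entirely in the algebraic reinterpretation of its two sides.

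First I would instantiate \lem{global-convergence} with the prescribed step sizes $\gamma_{k} = \zeta \mu / \red{L}$. Since $\zeta \in \oc{0}{1}$, these satisfy the admissibility requirement $\gamma_{k} \leq \mu / \red{L}$, so the lemma applies and the partial sums evaluate explicitly to $\sum_{k=0}^{n-1} \gamma_{k} = n \zeta \mu / \red{L}$. Substituting this into the reduced-energy estimate yields
$$\red{\mc{K}}(\mb{\xi}_{n}) \leq \mc{K}^{\star} + \frac{\red{L} \, \delta_{\psi}^{\star}(\mb{\xi}_{0})}{\zeta \mu n}.$$

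Next I would convert both sides into energy-norm distances. For the left-hand side I would use the relation recorded just before the statement, namely $\red{\mc{K}}(\mb{\xi}_{n}) = \mc{J}(u^{\star}) + \tfrac{1}{2} \norm{\red{\mc{R}}(\mb{\xi}_{n}) - u^{\star}}_{a}^{2}$, which is \eqref{eq:energy-gap} composed with the reduced realisation. The one point deserving care is the right-hand side: the global minimum value $\mc{K}^{\star}$ must be identified with the best-approximation error of $u^{\star}$ in $V$. Since $\mc{K}^{\star} = \min_{v \in V} \mc{J}(v)$ (the minimum being attained by the coercivity and Weierstrass argument already invoked in \subsect{local-convergence}, using that $\md{W} \times \md{X}$ is closed), and since the conformity $V \subset U$ lets us apply \eqref{eq:energy-gap} to every $v \in V$ in the form $\mc{J}(v) = \mc{J}(u^{\star}) + \tfrac{1}{2} \norm{v - u^{\star}}_{a}^{2}$, taking the infimum over $v \in V$ gives
$$\mc{K}^{\star} = \mc{J}(u^{\star}) + \frac{1}{2} \inf_{v \in V} \norm{v - u^{\star}}_{a}^{2}.$$

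Finally I would substitute both expressions into the displayed bound, cancel the common additive constant $\mc{J}(u^{\star})$, and multiply through by $2$ to reach the claimed inequality. No step constitutes a genuine obstacle; the only subtlety is the identification of $\mc{K}^{\star}$ with the best-approximation error, which hinges on the attainment of the minimum of $\mc{J}$ over the closed feasible set and on the conformity $V \subset U$ that makes the energy-gap identity applicable termwise.
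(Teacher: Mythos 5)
Your proposal is correct and follows essentially the same route as the paper's proof: instantiate \lem{global-convergence} with $\gamma_{k} = \zeta \mu / \red{L}$, then translate both sides into energy-norm distances via \eqref{eq:energy-gap}. The only cosmetic difference is that you identify $\mc{K}^{\star} = \mc{J}(u^{\star}) + \tfrac{1}{2} \inf_{v \in V} \norm{v - u^{\star}}_{a}^{2}$ up front (an exact identity, justified by $\mc{K}^{\star} = \min_{v \in V} \mc{J}(v)$ and conformity), whereas the paper inserts an arbitrary $v \in V$, bounds $-\mc{J}(v) \leq -\mc{K}^{\star}$, and takes the infimum at the end --- a trivially equivalent rearrangement.
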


\begin{proof}
    See \app{nonlinear-cea}.
\end{proof}

\corol{nonlinear-cea} can be understood as a nonlinear version of Céa's lemma: $\red{\mc{R}}(\mb{\xi}_{n})$ approaches an optimal solution in the approximation space $V$ at a rate of $O(n^{-1/2})$, where $n$ denotes the iteration count. Crucially, this provides a quantitative and rigorous guarantee that the discrete solution generated by \alg{optimiser} approaches the optimal energy-minimising solution within the nonlinear approximation space.

\subsection{Sufficient conditions for convergence}

To simplify the application of our framework, we provide sufficient conditions for the key assumptions, formulated in terms of boundedness and regularity properties of the parametric basis functions.

\subsubsection{Regularity of the energy}

We show the regularity of the gradient of the discrete energy with respect to $\md{W}$ and $\md{X}$ when the realisation is differentiable in $U$, as formalised in the following assumption. For convenience, we include the boundedness and regularity of the gradient of the realisation in this assumption. We equip $U^{\dnl \times \dl}$ with the norm $\norm{\mb{U}}_{U, 2, 2}^{2} \doteq \sum_{i = 1}^{\dnl} \sum_{j = 1}^{\dl} \norm{\mb{U}_{i, j}}_{U}^{2}$ for all $\mb{U} \in U^{\dnl \times \dl}$.

\renewcommand{\theassumption}{4b}

\begin{assumption}
    \label{assumption:nonlinear-boundedness-regularity}
    The basis functions are differentiable in $U$ and their gradient is uniformly bounded and Hölder continuous in $\md{X}$; that is,
    \begin{enumerate}
        \item $\partial_{i} \mb{\phi}_{k}(\mb{\xi}) \in U$, for all $\mb{\xi} \in \md{X}$, $k \in \rg{1}{\dl}$ and $i \in \rg{1}{\dnl}$,
        \item $\norm{\DX \mb{\phi}}_{U, 2, 2, \infty} \doteq \sup_{\mb{\xi} \in \md{X}} \norm{\DX \mb{\phi}(\mb{\xi})}_{U, 2, 2} < \infty$,
        \item there exist $L_{\DX \mb{\phi}} > 0$ and $\nu \in \oc{0}{1}$ such that $\norm{\DX \mb{\phi}(\mb{\xi}) - \DX \mb{\phi}(\mb{\eta})}_{U, 2, 2} \leq L_{\DX \mb{\phi}} \norm{\mb{\xi} - \mb{\eta}}_{\md{X}}^{\nu}$ for all $\mb{\xi}, \mb{\eta} \in \md{X}$.
    \end{enumerate}
\end{assumption}

\renewcommand{\theassumption}{\arabic{assumption}}

\begin{lemma}
    \label{lem:regularity-energy}
    For all $\mb{v}, \mb{w} \in \md{W}$, $\mb{\xi}, \mb{\eta} \in \md{X}$,
    \begin{align*}
        \norm{\DW \mc{K}(\mb{v}, \mb{\xi}) - \DW \mc{K}(\mb{w}, \mb{\eta})}_{2} & \leq \norm{a}_{U \times U} M_{\mb{\phi}}(\mb{\xi}, \mb{\eta})^{2} \norm{\mb{v} - \mb{w}}_{2}                                                                          \\
                                                                                & + (2 \norm{a}_{U \times U} M_{\md{W}}(\mb{v}, \mb{w}) M_{\mb{\phi}}(\mb{\xi}, \mb{\eta}) + \norm{\ell}_{U}) \norm{\mb{\phi}(\mb{\xi}) - \mb{\phi}(\mb{\eta})}_{U, 2},
    \end{align*}
    where $M_{\mb{\phi}}(\mb{\xi}, \mb{\eta}) = \max(\norm{\mb{\phi}(\mb{\xi})}_{U, 2}, \norm{\mb{\phi}(\mb{\eta})}_{U, 2})$ and $M_{\md{W}}(\mb{v}, \mb{w}) = \max(\norm{\mb{v}}_{2}, \norm{\mb{w}}_{2})$.

    Under \subassum{nonlinear-boundedness-regularity}{1}, for all $\mb{v}, \mb{w} \in \md{W}$, $\mb{\xi}, \mb{\eta} \in \md{X}$,
    \begin{align*}
         & \norm{\DX \mc{K}(\mb{v}, \mb{\xi}) - \DX \mc{K}(\mb{w}, \mb{\eta})}_{2}                                                                                                                                   \\
         & \leq (2 \norm{a}_{U \times U} M_{\md{W}}(\mb{v}, \mb{w}) M_{\mb{\phi}}(\mb{\xi}, \mb{\eta}) + \norm{\ell}_{U}) M_{\DX \mb{\phi}}(\mb{\xi}, \mb{\eta}) \norm{\mb{v} - \mb{w}}_{2}                          \\
         & + \norm{a}_{U \times U} M_{\md{W}}(\mb{v}, \mb{w})^{2} M_{\DX \mb{\phi}}(\mb{\xi}, \mb{\eta}) \norm{\mb{\phi}(\mb{\xi}) - \mb{\phi}(\mb{\eta})}_{U, 2}                                                    \\
         & + M_{\md{W}}(\mb{v}, \mb{w}) (\norm{a}_{U \times U} M_{\md{W}}(\mb{v}, \mb{w}) M_{\mb{\phi}}(\mb{\xi}, \mb{\eta}) + \norm{\ell}_{U}) \norm{\DX \mb{\phi}(\mb{\xi}) - \DX \mb{\phi}(\mb{\eta})}_{U, 2, 2},
    \end{align*}
    where $M_{\DX \mb{\phi}}(\mb{\xi}, \mb{\eta}) = \max(\norm{\DX \mb{\phi}(\mb{\xi})}_{U, 2, 2}, \norm{\DX \mb{\phi}(\mb{\eta})}_{U, 2, 2})$.
\end{lemma}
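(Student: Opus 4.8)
The plan is to compute both gradients in closed form and then estimate their increments by systematically isolating the three canonical differences $\mb{v}-\mb{w}$, $\mb{\phi}(\mb{\xi})-\mb{\phi}(\mb{\eta})$, and $\DX\mb{\phi}(\mb{\xi})-\DX\mb{\phi}(\mb{\eta})$. For the linear gradient, differentiating $\mc{K}(\mb{w},\mb{\xi}) = \frac{1}{2}\mb{w}^{\ast}\mb{A}(\mb{\xi})\mb{w} - \mb{w}^{\ast}\mb{\ell}(\mb{\xi})$ in $\mb{w}$ gives $\DW\mc{K}(\mb{w},\mb{\xi}) = \mb{A}(\mb{\xi})\mb{w} - \mb{\ell}(\mb{\xi})$, so no differentiability of $\mb{\phi}$ in $U$ is required here. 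First I would write
$$\DW\mc{K}(\mb{v},\mb{\xi}) - \DW\mc{K}(\mb{w},\mb{\eta}) = \mb{A}(\mb{\xi})(\mb{v}-\mb{w}) + (\mb{A}(\mb{\xi})-\mb{A}(\mb{\eta}))\mb{w} - (\mb{\ell}(\mb{\xi})-\mb{\ell}(\mb{\eta})).$$
The first term is controlled using $\lambda_{\max}(\mb{A}(\mb{\xi}))\leq\norm{a}_{U\times U}\norm{\mb{\phi}(\mb{\xi})}_{U,2}^{2}$ from \lem{energy-decrease}. For the second term I would use bilinearity to write its $k$-th entry as $a(\mc{R}(\mb{w},\mb{\xi})-\mc{R}(\mb{w},\mb{\eta}),\mb{\phi}_{k}(\mb{\xi})) + a(\mc{R}(\mb{w},\mb{\eta}),\mb{\phi}_{k}(\mb{\xi})-\mb{\phi}_{k}(\mb{\eta}))$, apply continuity of $a$, and sum over $k$ with Cauchy--Schwarz to recover factors $\norm{\mb{\phi}(\mb{\xi})}_{U,2}$, $\norm{\mb{\phi}(\mb{\eta})}_{U,2}$, and $\norm{\mb{\phi}(\mb{\xi})-\mb{\phi}(\mb{\eta})}_{U,2}$; the elementary bound $\norm{\mc{R}(\mb{w},\cdot)}_{U}\leq\norm{\mb{w}}_{2}\norm{\mb{\phi}(\cdot)}_{U,2}$ then produces the $M_{\mb{\phi}}$ and $M_{\md{W}}$ prefactors. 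The load difference $\mb{\ell}(\mb{\xi})-\mb{\ell}(\mb{\eta})$ is handled identically with $\norm{\ell}_{U}$, giving the first stated estimate.

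For the nonlinear gradient, under \subassum{nonlinear-boundedness-regularity}{1} the chain rule is valid in $U$ since each $\partial_{i}\mb{\phi}_{k}(\mb{\xi})\in U$; using symmetry of $a$ this yields the component form $\partial_{i}\mc{K}(\mb{w},\mb{\xi}) = a(\mc{R}(\mb{w},\mb{\xi}),\partial_{i}\mc{R}(\mb{w},\mb{\xi})) - \ell(\partial_{i}\mc{R}(\mb{w},\mb{\xi}))$, with $\partial_{i}\mc{R}(\mb{w},\mb{\xi}) = \sum_{k}\mb{w}_{k}\partial_{i}\mb{\phi}_{k}(\mb{\xi})$. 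I would introduce the residual functional $\rho(\mb{w},\mb{\xi})\doteq a(\mc{R}(\mb{w},\mb{\xi}),\cdot) - \ell \in U^{\ast}$, so each component reads $\partial_{i}\mc{K}(\mb{w},\mb{\xi}) = \rho(\mb{w},\mb{\xi})(\partial_{i}\mc{R}(\mb{w},\mb{\xi}))$, and record the uniform operator-norm bound $\norm{\rho(\mb{w},\mb{\xi})}_{U^{\ast}}\leq\norm{a}_{U\times U}\norm{\mb{w}}_{2}\norm{\mb{\phi}(\mb{\xi})}_{U,2} + \norm{\ell}_{U}$. A product-rule telescoping then splits the increment as
$$\partial_{i}\mc{K}(\mb{v},\mb{\xi}) - \partial_{i}\mc{K}(\mb{w},\mb{\eta}) = \rho(\mb{v},\mb{\xi})\bigl(\partial_{i}\mc{R}(\mb{v},\mb{\xi})-\partial_{i}\mc{R}(\mb{w},\mb{\eta})\bigr) + \bigl[\rho(\mb{v},\mb{\xi})-\rho(\mb{w},\mb{\eta})\bigr]\bigl(\partial_{i}\mc{R}(\mb{w},\mb{\eta})\bigr).$$

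In the first summand I would expand $\partial_{i}\mc{R}(\mb{v},\mb{\xi})-\partial_{i}\mc{R}(\mb{w},\mb{\eta}) = \sum_{k}(\mb{v}_{k}-\mb{w}_{k})\partial_{i}\mb{\phi}_{k}(\mb{\xi}) + \sum_{k}\mb{w}_{k}(\partial_{i}\mb{\phi}_{k}(\mb{\xi})-\partial_{i}\mb{\phi}_{k}(\mb{\eta}))$, bound $\rho(\mb{v},\mb{\xi})$ by its operator norm, and take the $\ell_{2}$ norm over $i$; Cauchy--Schwarz over $k$ followed by summation over $i$ converts the two pieces into $\norm{\DX\mb{\phi}(\mb{\xi})}_{U,2,2}\norm{\mb{v}-\mb{w}}_{2}$ and $\norm{\mb{w}}_{2}\norm{\DX\mb{\phi}(\mb{\xi})-\DX\mb{\phi}(\mb{\eta})}_{U,2,2}$, yielding one $\norm{\mb{v}-\mb{w}}_{2}$ contribution and the $\norm{\DX\mb{\phi}(\mb{\xi})-\DX\mb{\phi}(\mb{\eta})}_{U,2,2}$ contribution. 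For the second summand the decisive simplification is that the $\ell$ parts of $\rho$ cancel, leaving $[\rho(\mb{v},\mb{\xi})-\rho(\mb{w},\mb{\eta})](u) = a(\mc{R}(\mb{v},\mb{\xi})-\mc{R}(\mb{w},\mb{\eta}),u)$; applying continuity of $a$, taking the $\ell_{2}$ norm over $i$, using $(\sum_{i}\norm{\partial_{i}\mc{R}(\mb{w},\mb{\eta})}_{U}^{2})^{1/2}\leq\norm{\mb{w}}_{2}\norm{\DX\mb{\phi}(\mb{\eta})}_{U,2,2}$, and finally splitting $\mc{R}(\mb{v},\mb{\xi})-\mc{R}(\mb{w},\mb{\eta})$ into its $\mb{v}-\mb{w}$ and $\mb{\phi}(\mb{\xi})-\mb{\phi}(\mb{\eta})$ parts produces the remaining $\norm{\mb{v}-\mb{w}}_{2}$ and $\norm{\mb{\phi}(\mb{\xi})-\mb{\phi}(\mb{\eta})}_{U,2}$ contributions. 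Collecting all pieces and replacing $\norm{\mb{\phi}(\cdot)}_{U,2}$, $\norm{\mb{w}}_{2}$, $\norm{\DX\mb{\phi}(\cdot)}_{U,2,2}$ by their maxima $M_{\mb{\phi}}$, $M_{\md{W}}$, $M_{\DX\mb{\phi}}$ reproduces exactly the stated estimate.

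The main obstacle is the bookkeeping of the nested $\ell_{2}$ sums over $i$ and $k$ coupled with the $U$-norms: each telescoped term must be reduced to one of the three canonical increments while tracking the correct prefactors, and care is needed so that the two separate $\norm{\mb{v}-\mb{w}}_{2}$ contributions (one from the direction split, one from the functional split) combine into the factor $2\norm{a}_{U\times U}M_{\md{W}}M_{\mb{\phi}}+\norm{\ell}_{U}$. The cancellation of $\ell$ in the functional increment and the uniform operator-norm bound on $\rho$ are the two observations that keep the computation tractable.
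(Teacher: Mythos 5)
Your proof is correct and reproduces exactly the stated constants, but it takes a genuinely different decomposition from the paper's. The paper symmetrises: using $x = \frac{x+y}{2} + \frac{x-y}{2}$ on each argument, it splits $\partial_{i}\mc{K}(\mb{v},\mb{\xi}) - \partial_{i}\mc{K}(\mb{w},\mb{\eta})$ into terms such as $\frac{1}{4}\abs{a([\mb{v}-\mb{w}]^{\ast}[\mb{\phi}(\mb{\xi})+\mb{\phi}(\mb{\eta})], \mb{\phi}_{i}(\mb{\xi})+\mb{\phi}_{i}(\mb{\eta}))}$, each carrying exactly one increment, and then concludes with continuity of $a$ and $\ell$, Cauchy--Schwarz over the nested sums, and $x + y \leq 2\max(x,y)$ to produce the $M_{\md{W}}$, $M_{\mb{\phi}}$, $M_{\DX\mb{\phi}}$ factors. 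You instead telescope asymmetrically: $\mb{A}(\mb{\xi})(\mb{v}-\mb{w}) + (\mb{A}(\mb{\xi})-\mb{A}(\mb{\eta}))\mb{w} - (\mb{\ell}(\mb{\xi})-\mb{\ell}(\mb{\eta}))$ for the linear gradient, and the residual-functional split $\rho(\mb{v},\mb{\xi})(\partial_{i}\mc{R}(\mb{v},\mb{\xi})-\partial_{i}\mc{R}(\mb{w},\mb{\eta})) + [\rho(\mb{v},\mb{\xi})-\rho(\mb{w},\mb{\eta})](\partial_{i}\mc{R}(\mb{w},\mb{\eta}))$ for the nonlinear one, exploiting the cancellation of $\ell$ in the increment of $\rho$. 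I checked your bookkeeping: the two contributions $(\norm{a}_{U\times U}M_{\md{W}}M_{\mb{\phi}} + \norm{\ell}_{U})M_{\DX\mb{\phi}}$ (from the operator-norm bound on $\rho(\mb{v},\mb{\xi})$) and $\norm{a}_{U\times U}M_{\md{W}}M_{\mb{\phi}}M_{\DX\mb{\phi}}$ (from the $a$-part of the functional increment against the direction split of $\mc{R}(\mb{v},\mb{\xi})-\mc{R}(\mb{w},\mb{\eta})$) do combine into $(2\norm{a}_{U\times U}M_{\md{W}}M_{\mb{\phi}} + \norm{\ell}_{U})M_{\DX\mb{\phi}}$, and the remaining terms match the other two coefficients, so the final estimate is identical. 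Both routes use the same closed-form component $\partial_{i}\mc{K}(\mb{v},\mb{\xi}) = a(\mb{v}^{\ast}\mb{\phi}(\mb{\xi}), \mb{v}^{\ast}\partial_{i}\mb{\phi}(\mb{\xi})) - \ell(\mb{v}^{\ast}\partial_{i}\mb{\phi}(\mb{\xi}))$ under \subassum{nonlinear-boundedness-regularity}{1} and the same Cauchy--Schwarz machinery; what differs is the bookkeeping philosophy. Your version is more modular (the uniform bound on $\norm{\rho(\mb{v},\mb{\xi})}_{U^{\ast}}$ is reusable, and the intermediate one-sided factors like $\norm{\mb{w}}_{2}$ and $\norm{\mb{\phi}(\mb{\xi})}_{U,2}$ are marginally sharper before being majorised by the maxima), whereas the paper's symmetrisation keeps every term symmetric in $(\mb{v},\mb{\xi})$ and $(\mb{w},\mb{\eta})$, so the maxima emerge without choosing a telescoping order and the same two-line trick handles both gradients uniformly.
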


\begin{proof}
    See \app{regularity-energy}.
\end{proof}

In particular, the map $\DW \mc{K}$ is Lipschitz continuous with respect to the linear parameters provided that the basis function map $\mb{\phi}$ is uniformly bounded in $U$ (\subassum{linear-boundedness-regularity}{1}). With respect to the nonlinear parameters, $\DW \mc{K}$ inherits the modulus of continuity of $\mb{\phi}$, but this continuity holds only locally in the linear parameters (on bounded subsets). The Lipschitz regularity of $\DW \mc{K}$ in the linear parameters is only needed when the linear parameters are not separated from the nonlinear parameters. If they are separated, only the regularity in the nonlinear parameters is needed, at every fixed linear parameter (see \assum{regularity}).

For the derivative $\DX \mc{K}$, the continuity properties follow from those of $\mb{\phi}$ and its parametric derivative $\DX \mb{\phi}$, again only locally in the linear parameters. This observation motivated the formulation of \assum{regularity} allowing the continuity constant to depend on the norm of the linear parameters.

It is clear from \lem{regularity-energy} that \assum{linear-boundedness-regularity} and \assum{nonlinear-boundedness-regularity} together imply \assum{regularity}. However, when the basis functions fail to be differentiable in $U$, this implication breaks down; yet \assum{regularity} may still hold and can often be verified directly.

\subsubsection{Regularity of the reduced energy}
\label{subsect:regularity-reduced-energy}

It was shown in \subsect{reduced-energy} that the lowest eigenvalue of $\mb{A}(\mb{\xi})$ is larger than $\alpha \omega(\mb{\xi})$. Besides, \lem{energy-decrease} shows that the largest eigenvalue of $\mb{A}(\mb{\xi})$ is smaller than $\norm{a}_{U \times U} \norm{\mb{\phi}(\mb{\xi})}_{U, 2}^{2}$. Under \assum{spd} and \subassum{linear-boundedness-regularity}{1}, the condition number of $\mb{A}(\mb{\xi})$ is thus uniformly bounded by
$$\kappa_{\max} \doteq \frac{\norm{a}_{U \times U}}{\alpha} \frac{\norm{\mb{\phi}}_{U, 2, \infty}}{\omega_{\min}}.$$
With this notation, we can establish the boundedness of the best linear parameters and the regularity of the reduced energy.

\begin{lemma}
    \label{lem:bounds-best-linear}
    Under \assum{spd} and \subassum{linear-boundedness-regularity}{1}, it holds
    $$\sup_{\mb{\xi} \in \md{X}} \norm{\best{\mb{w}}(\mb{\xi})}_{2} \leq \frac{\norm{\ell}_{U} \norm{\mb{\phi}}_{U, 2, \infty}}{\alpha \omega_{\min}},$$
    and for all $\mb{\xi}, \mb{\eta} \in \md{X}$, we have the bound
    $$\norm{\best{\mb{w}}(\mb{\xi}) - \best{\mb{w}}(\mb{\eta})}_{2} \leq (1 + 2 \kappa_{\max}) \frac{\norm{\ell}_{U}}{\alpha \omega_{\min}} \norm{\mb{\phi}(\mb{\xi}) - \mb{\phi}(\mb{\eta})}_{U, 2}.$$
    Under the additional \subassum{nonlinear-boundedness-regularity}{1} and \subassum{nonlinear-boundedness-regularity}{2}, it also holds
    \begin{align*}
        \norm{\nabla \red{\mc{K}}(\mb{\xi}) - \nabla \red{\mc{K}(\mb{\eta})}}_{2} & \leq [\kappa_{\max} + (1 + 2 \kappa_{\max})^{2}] \frac{\norm{\ell}_{U}^{2}}{\alpha \omega_{\min}} \norm{\DX \mb{\phi}}_{U, 2, 2, \infty} \norm{\mb{\phi}(\mb{\xi}) - \mb{\phi}(\mb{\eta})}_{U, 2} \\
                                                                                  & + (1 + \kappa_{\max}) \frac{\norm{\ell}_{U}^{2}}{\alpha \omega_{\min}} \norm{\mb{\phi}}_{U, 2, \infty} \norm{\DX \mb{\phi}(\mb{\xi}) - \DX \mb{\phi}(\mb{\eta})}_{U, 2, 2}.
    \end{align*}
\end{lemma}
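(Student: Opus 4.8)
The plan is to prove the three estimates in the order stated, since each feeds into the next, and to exploit throughout the eigenvalue bounds $\lambda_{\min}(\mb{A}(\mb{\xi})) \geq \alpha \omega_{\min}$ (\assum{spd}) and $\lambda_{\max}(\mb{A}(\mb{\xi})) \leq \norm{a}_{U \times U} \norm{\mb{\phi}}_{U, 2, \infty}^{2}$ (\lem{energy-decrease} with \subassum{linear-boundedness-regularity}{1}), so that $\kappa_{\max}$ is an upper bound for the condition number of $\mb{A}(\mb{\xi})$. For the first bound I would write $\norm{\best{\mb{w}}(\mb{\xi})}_{2} \leq \lambda_{\min}(\mb{A}(\mb{\xi}))^{-1} \norm{\mb{\ell}(\mb{\xi})}_{2}$ and estimate the load vector entrywise through $\abs{\mb{\ell}(\mb{\xi})_{j}} = \abs{\ell(\mb{\phi}_{j}(\mb{\xi}))} \leq \norm{\ell}_{U} \norm{\mb{\phi}_{j}(\mb{\xi})}_{U}$, which after summing gives $\norm{\mb{\ell}(\mb{\xi})}_{2} \leq \norm{\ell}_{U} \norm{\mb{\phi}(\mb{\xi})}_{U, 2}$; combining with \subassum{linear-boundedness-regularity}{1} yields the claim.

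For the second bound the governing identity comes from subtracting the two defining linear systems $\mb{A}(\mb{\xi}) \best{\mb{w}}(\mb{\xi}) = \mb{\ell}(\mb{\xi})$ and $\mb{A}(\mb{\eta}) \best{\mb{w}}(\mb{\eta}) = \mb{\ell}(\mb{\eta})$, giving
$$\best{\mb{w}}(\mb{\xi}) - \best{\mb{w}}(\mb{\eta}) = \mb{A}(\mb{\xi})^{-1} \left[ (\mb{\ell}(\mb{\xi}) - \mb{\ell}(\mb{\eta})) - (\mb{A}(\mb{\xi}) - \mb{A}(\mb{\eta})) \best{\mb{w}}(\mb{\eta}) \right].$$
Taking Euclidean norms and using $\norm{\mb{A}(\mb{\xi})^{-1}}_{2} \leq (\alpha \omega_{\min})^{-1}$, the load difference is controlled as above by $\norm{\ell}_{U} \norm{\mb{\phi}(\mb{\xi}) - \mb{\phi}(\mb{\eta})}_{U, 2}$. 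The real work is estimating $\norm{\mb{A}(\mb{\xi}) - \mb{A}(\mb{\eta})}_{2}$: I would expand each entry via the bilinear add-and-subtract $a(\mb{\phi}_{j}(\mb{\xi}), \mb{\phi}_{i}(\mb{\xi})) - a(\mb{\phi}_{j}(\mb{\eta}), \mb{\phi}_{i}(\mb{\eta})) = a(\mb{\phi}_{j}(\mb{\xi}) - \mb{\phi}_{j}(\mb{\eta}), \mb{\phi}_{i}(\mb{\xi})) + a(\mb{\phi}_{j}(\mb{\eta}), \mb{\phi}_{i}(\mb{\xi}) - \mb{\phi}_{i}(\mb{\eta}))$, then bound the operator norm by testing against an arbitrary $\mb{u}$; each of the two resulting sums $\sum_{j} u_{j} (\mb{\phi}_{j}(\mb{\xi}) - \mb{\phi}_{j}(\mb{\eta}))$ and $\sum_{j} u_{j} \mb{\phi}_{j}(\mb{\eta})$ is a realisation in $U$ whose norm is bounded by $\norm{\mb{u}}_{2}$ times $\norm{\mb{\phi}(\mb{\xi}) - \mb{\phi}(\mb{\eta})}_{U, 2}$ or $\norm{\mb{\phi}(\mb{\eta})}_{U, 2}$ respectively (Cauchy--Schwarz), and continuity of $a$ closes the estimate to $\norm{\mb{A}(\mb{\xi}) - \mb{A}(\mb{\eta})}_{2} \leq 2 \norm{a}_{U \times U} M_{\mb{\phi}}(\mb{\xi}, \mb{\eta}) \norm{\mb{\phi}(\mb{\xi}) - \mb{\phi}(\mb{\eta})}_{U, 2}$ (a crude Frobenius bound yields the same constant). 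Substituting this together with the first bound for $\norm{\best{\mb{w}}(\mb{\eta})}_{2}$, and recognising $\norm{a}_{U \times U} \norm{\mb{\phi}}_{U, 2, \infty}^{2} / (\alpha \omega_{\min}) = \kappa_{\max}$, produces the factor $(1 + 2 \kappa_{\max})$.

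For the third bound the observation that unlocks everything is $\nabla \red{\mc{K}}(\mb{\xi}) = \DX \mc{K}(\best{\mb{w}}(\mb{\xi}), \mb{\xi})$, established in \subsect{reduced-energy}, so that $\nabla \red{\mc{K}}(\mb{\xi}) - \nabla \red{\mc{K}}(\mb{\eta}) = \DX \mc{K}(\best{\mb{w}}(\mb{\xi}), \mb{\xi}) - \DX \mc{K}(\best{\mb{w}}(\mb{\eta}), \mb{\eta})$. I would then apply the second inequality of \lem{regularity-energy} verbatim with $\mb{v} = \best{\mb{w}}(\mb{\xi})$ and $\mb{w} = \best{\mb{w}}(\mb{\eta})$ (legitimate under \subassum{nonlinear-boundedness-regularity}{1}), and insert the uniform bounds $M_{\md{W}}(\best{\mb{w}}(\mb{\xi}), \best{\mb{w}}(\mb{\eta})) \leq \norm{\ell}_{U} \norm{\mb{\phi}}_{U, 2, \infty} / (\alpha \omega_{\min})$ from the first bound, $M_{\mb{\phi}} \leq \norm{\mb{\phi}}_{U, 2, \infty}$ (\subassum{linear-boundedness-regularity}{1}), $M_{\DX \mb{\phi}} \leq \norm{\DX \mb{\phi}}_{U, 2, 2, \infty}$ (\subassum{nonlinear-boundedness-regularity}{2}), and the second bound for $\norm{\best{\mb{w}}(\mb{\xi}) - \best{\mb{w}}(\mb{\eta})}_{2}$. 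The remaining step is bookkeeping: repeatedly collapsing $\norm{a}_{U \times U} \norm{\mb{\phi}}_{U, 2, \infty}^{2} / (\alpha \omega_{\min})$ into $\kappa_{\max}$, the first two terms of \lem{regularity-energy} combine into $[\kappa_{\max} + (1 + 2 \kappa_{\max})^{2}]$ times the $\norm{\mb{\phi}(\mb{\xi}) - \mb{\phi}(\mb{\eta})}_{U, 2}$ contribution, and the third term gives $(1 + \kappa_{\max})$ times the $\norm{\DX \mb{\phi}(\mb{\xi}) - \DX \mb{\phi}(\mb{\eta})}_{U, 2, 2}$ contribution.

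The main obstacle is the operator-norm estimate of $\mb{A}(\mb{\xi}) - \mb{A}(\mb{\eta})$ expressed in terms of the $U$-norm difference $\norm{\mb{\phi}(\mb{\xi}) - \mb{\phi}(\mb{\eta})}_{U, 2}$ rather than a raw parameter distance, since this is what makes all three bounds share the same modulus of continuity and ultimately transfers the regularity of $\mb{\phi}$ to $\nabla \red{\mc{K}}$. Once the resolvent-style identity and this matrix estimate are in place, the first two bounds are direct, and the third is essentially a single substitution into \lem{regularity-energy} followed by constant tracking.
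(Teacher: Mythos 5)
Your proposal is correct and reaches all three estimates with exactly the stated constants, but the middle estimate takes a genuinely different route from the paper. For the Lipschitz bound on $\best{\mb{w}}$, the paper interpolates the two linear systems: it defines $h(t) = [(1-t)\mb{A}(\mb{\xi}) + t\mb{A}(\mb{\eta})]^{-1}[(1-t)\mb{\ell}(\mb{\xi}) + t\mb{\ell}(\mb{\eta})]$, verifies that the interpolated matrix stays uniformly positive definite under \assum{spd}, differentiates $h$ and invokes the mean value theorem, then bounds $\sigma_{\max}(\mb{A}(\mb{\xi}) - \mb{A}(\mb{\eta}))$ through a symmetrised Frobenius-norm computation. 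Your resolvent-style identity $\best{\mb{w}}(\mb{\xi}) - \best{\mb{w}}(\mb{\eta}) = \mb{A}(\mb{\xi})^{-1}[(\mb{\ell}(\mb{\xi}) - \mb{\ell}(\mb{\eta})) - (\mb{A}(\mb{\xi}) - \mb{A}(\mb{\eta}))\best{\mb{w}}(\mb{\eta})]$ is more elementary: it needs no differentiability in $t$, no mean value theorem, and only the invertibility of a single matrix, and your add-and-subtract operator-norm estimate $\norm{\mb{A}(\mb{\xi}) - \mb{A}(\mb{\eta})}_{2} \leq 2 \norm{a}_{U \times U} M_{\mb{\phi}}(\mb{\xi}, \mb{\eta}) \norm{\mb{\phi}(\mb{\xi}) - \mb{\phi}(\mb{\eta})}_{U, 2}$ delivers the same constant as the paper's symmetrised bound, so both arguments land on $(1 + 2\kappa_{\max})$; what the homotopy buys is a template extending to higher-order smoothness of $\mb{\xi} \mapsto \best{\mb{w}}(\mb{\xi})$, what yours buys is brevity. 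Your first bound (via $\norm{\mb{A}(\mb{\xi})^{-1}}_{2} \norm{\mb{\ell}(\mb{\xi})}_{2}$ with the entrywise load estimate) is a trivial variant of the paper's quadratic-form Cauchy--Schwarz argument, and your third step is precisely the paper's: the appendix proof consists of the single sentence that the last inequality follows by combining \lem{regularity-energy} with the first two bounds, and your constant tracking --- $2 \norm{a}_{U \times U} M_{\md{W}} M_{\mb{\phi}} \leq 2 \kappa_{\max} \norm{\ell}_{U}$, so the first term of \lem{regularity-energy} contributes $(1 + 2\kappa_{\max})^{2}$, the second contributes $\kappa_{\max}$, and the third collapses to $(1 + \kappa_{\max})$ --- checks out exactly. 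One point worth flagging: you identify $\kappa_{\max} = \norm{a}_{U \times U} \norm{\mb{\phi}}_{U, 2, \infty}^{2} / (\alpha \omega_{\min})$, with the square, which is what the eigenvalue bounds $\lambda_{\min}(\mb{A}(\mb{\xi})) \geq \alpha \omega_{\min}$ and $\lambda_{\max}(\mb{A}(\mb{\xi})) \leq \norm{a}_{U \times U} \norm{\mb{\phi}(\mb{\xi})}_{U, 2}^{2}$ actually give and what all three stated constants require; the displayed definition of $\kappa_{\max}$ in \subsect{regularity-reduced-energy} omits that exponent, an apparent typo in the paper that your reading silently corrects.
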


\begin{proof}
    See \app{bounds-best-linear}.
\end{proof}

\lem{bounds-best-linear} establishes that the best linear parameters are uniformly bounded and inherit the same regularity as the basis functions. Consequently, the reduced energy functional inherits the continuity properties of the basis functions and their gradients. Whenever $\DX \mc{K}$ is Lipschitz (via \assum{regularity} or \lem{regularity-energy}), the fact that $\best{\mb{w}}$ is Lipschitz and the multilinearity of $\DX \mc{K}$ in $\mb{w}$ ensure that $\nabla \red{\mc{K}}$ is Lipschitz.
When \assum{nonlinear-boundedness-regularity} holds, \lem{bounds-best-linear} provides tighter constants.

\subsubsection{Directional convexity}

We conclude our analysis with a lower bound on the Hessian of $\red{\mc{K}}$, which can be leveraged to show \assum{directional-convexity}.

\begin{lemma}
    \label{lem:convexity}
    Suppose $\mb{\phi}(\mb{\xi})$ is twice differentiable in $U$ for all $\mb{\xi} \in \md{X}$. For all $\mb{\xi} \in \md{X}$, $\mb{v} \in \RR^{\dnl}$, and $\mb{\xi}^{\star} \in \md{X}^{\star}$,
    $$\nabla^{2} \red{\mc{K}}(\mb{\xi}) (\mb{v}, \mb{v}) \geq \norm{\nabla \red{\mc{R}}(\mb{\xi}) \mb{v}}_{a}^{2} - (\norm{\red{\mc{R}}(\mb{\xi}) - \red{\mc{R}}(\mb{\xi}^{\star})}_{a} + \inf_{v \in V} \norm{u^{\star} - v}_{a}) \norm{\nabla^{2} \red{\mc{R}}(\mb{\xi}) (\mb{v}, \mb{v})}_{a}.$$
\end{lemma}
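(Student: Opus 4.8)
The plan is to exploit the energy-gap identity \eqref{eq:energy-gap} to reduce the statement to an elementary second-order computation followed by a Cauchy--Schwarz estimate and a triangle inequality. Applying \eqref{eq:energy-gap} to $u = \red{\mc{R}}(\mb{\xi})$, the reduced energy reads $\red{\mc{K}}(\mb{\xi}) = \mc{J}(u^{\star}) + \frac{1}{2} \norm{\red{\mc{R}}(\mb{\xi}) - u^{\star}}_{a}^{2}$, so that $\red{\mc{K}}$ and $\frac{1}{2} \norm{\red{\mc{R}}(\cdot) - u^{\star}}_{a}^{2}$ differ only by an additive constant. I would therefore differentiate this quadratic-in-$\red{\mc{R}}$ expression twice in $\mb{\xi}$ and estimate the curvature term.

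Next I would carry out the differentiation. Writing the directional derivative of $\mc{J}$ as $\D \mc{J}(u)[h] = a(u, h) - \ell(h)$ and using the variational characterisation $\ell(h) = a(u^{\star}, h)$, the first derivative of $\red{\mc{K}}$ in a direction $\mb{v} \in \RR^{\dnl}$ is $a(\red{\mc{R}}(\mb{\xi}) - u^{\star}, \nabla \red{\mc{R}}(\mb{\xi}) \mb{v})$. Differentiating once more in $\mb{v}$ and using bilinearity and symmetry of $a$ gives $\nabla^{2} \red{\mc{K}}(\mb{\xi})(\mb{v}, \mb{v}) = \norm{\nabla \red{\mc{R}}(\mb{\xi}) \mb{v}}_{a}^{2} + a(\red{\mc{R}}(\mb{\xi}) - u^{\star}, \nabla^{2} \red{\mc{R}}(\mb{\xi})(\mb{v}, \mb{v}))$. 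This computation presupposes that $\red{\mc{R}}$ is twice differentiable into $U$; I would justify this from the hypothesis that $\mb{\phi}$ is twice differentiable in $U$ together with \assum{spd}, which guarantees that $\mb{\xi} \mapsto \mb{A}(\mb{\xi})^{-1}$, hence $\best{\mb{w}}$ and the reduced realisation \eqref{eq:reduced-realisation}, is twice differentiable.

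Then I would bound the curvature term from below. By the Cauchy--Schwarz inequality for the energy inner product, $a(\red{\mc{R}}(\mb{\xi}) - u^{\star}, \nabla^{2} \red{\mc{R}}(\mb{\xi})(\mb{v}, \mb{v})) \geq - \norm{\red{\mc{R}}(\mb{\xi}) - u^{\star}}_{a} \norm{\nabla^{2} \red{\mc{R}}(\mb{\xi})(\mb{v}, \mb{v})}_{a}$. It remains to replace $\norm{\red{\mc{R}}(\mb{\xi}) - u^{\star}}_{a}$ by the quantity appearing in the statement. The triangle inequality gives $\norm{\red{\mc{R}}(\mb{\xi}) - u^{\star}}_{a} \leq \norm{\red{\mc{R}}(\mb{\xi}) - \red{\mc{R}}(\mb{\xi}^{\star})}_{a} + \norm{\red{\mc{R}}(\mb{\xi}^{\star}) - u^{\star}}_{a}$, and since $\mb{\xi}^{\star} \in \md{X}^{\star}$ attains the global minimum, $\red{\mc{K}}(\mb{\xi}^{\star}) = \mc{K}^{\star} = \min_{v \in V} \mc{J}(v)$, so the energy-gap identity yields $\norm{\red{\mc{R}}(\mb{\xi}^{\star}) - u^{\star}}_{a}^{2} = 2(\mc{K}^{\star} - \mc{J}(u^{\star})) = \inf_{v \in V} \norm{u^{\star} - v}_{a}^{2}$; that is, $\red{\mc{R}}(\mb{\xi}^{\star})$ is a best approximation of $u^{\star}$ in $V$. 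Substituting these two facts into the lower bound produces exactly the stated inequality.

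The computation itself is routine. The only delicate point, as flagged in the differentiability discussion of \subsect{differentiability}, is that differentiation in the parameters need not generate admissible variations in $U$, so one cannot differentiate $\red{\mc{R}}$ into $U$ in general; here this obstacle is resolved directly by the lemma's hypothesis on $\mb{\phi}$ and the uniform invertibility provided by \assum{spd}. Once twice differentiability is secured, everything else reduces to the chain rule, the Cauchy--Schwarz inequality, and the triangle inequality.
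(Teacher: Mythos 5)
Your proof is correct and follows essentially the same route as the paper's: a second-order chain-rule computation for the quadratic energy, Galerkin orthogonality to replace $\ell(\cdot)$ by $a(u^{\star}, \cdot)$, the Cauchy--Schwarz inequality in the $a$-inner product, the triangle inequality through $\red{\mc{R}}(\mb{\xi}^{\star})$, and the identification of $\red{\mc{R}}(\mb{\xi}^{\star})$ as a best approximation of $u^{\star}$ in $V$. If anything, your direct differentiation of $\mc{J} \circ \red{\mc{R}}$ via the energy-gap identity, with the twice differentiability of $\best{\mb{w}}$ and $\red{\mc{R}}$ explicitly justified through \assum{spd}, is written more carefully than the paper's detour through partial Hessians of $\mc{K}$ at frozen $\mb{w}$, and it matches exactly the total derivatives $\nabla \red{\mc{R}}$ and $\nabla^{2} \red{\mc{R}}$ appearing in the statement.
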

\begin{proof}
    See \app{convexity}.
\end{proof}

We now show how \lem{convexity} can serve to prove that $\red{\mc{K}}$ is directionally convex in a neighbourhood of its global minimisers. First, by \lem{bounds-best-linear} under \assum{spd} and \subassum{linear-boundedness-regularity}{1}, we infer
\begin{align*}
    \norm{\red{\mc{R}}(\mb{\xi}) - \red{\mc{R}}(\mb{\eta})}_{U} & = \norm{\best{\mb{w}}(\mb{\xi})^{\ast} \mb{\phi}(\mb{\xi}) - \best{\mb{w}}(\mb{\xi})^{\ast} \mb{\phi}(\mb{\eta})}_{U}                                                                     \\
                                                                & \leq \norm{[\best{\mb{w}}(\mb{\xi}) - \best{\mb{w}}(\mb{\eta})]^{\ast} \mb{\phi}(\mb{\xi})}_{U} + \norm{\best{\mb{w}}(\mb{\eta})^{\ast} [\mb{\phi}(\mb{\xi}) - \mb{\phi}(\mb{\eta})]}_{U} \\
                                                                & \leq 2 (1 + \kappa_{\max}) \frac{\norm{\ell}_{U}}{\alpha \omega_{\min}} \norm{\mb{\phi}}_{U, 2, \infty} \norm{\mb{\phi}(\mb{\xi}) - \mb{\phi}(\mb{\eta})}_{U, 2}.
\end{align*}
From the coercivity and boundedness of $a$, we obtain $\norm{\cdot}_{a} \leq \alpha^{-1} \norm{a}_{U \times U} \norm{\cdot}_{U}$ and therefore
$$\nabla^{2} \red{\mc{K}}(\mb{\xi}) (\mb{v}, \mb{v}) \geq \norm{\nabla \red{\mc{R}}(\mb{\xi}) \mb{v}}_{a}^{2} - (C \norm{\mb{\phi}(\mb{\xi}) - \mb{\phi}(\mb{\xi}^{\star})}_{U, 2} + \inf_{v \in V} \norm{u^{\star} - v}_{a}) \norm{\nabla^{2} \red{\mc{R}}(\mb{\xi}) (\mb{v}, \mb{v})}_{a},$$
where $C = 2 \kappa_{\max} (1 + \kappa_{\max}) \norm{\ell}_{U} / \alpha$. We would like this lower bound to be non-negative in the direction $\mb{v} = \mb{\xi}^{\star} - \mb{\xi}$ pointing to a global minimum that is closest to $\mb{\xi}$, that is, for some $\mb{\xi}^{\star} \in \Pi_{\psi}^{\star}(\mb{\xi})$. This condition is equivalent to
$$\norm{\nabla \red{\mc{R}}(\mb{\eta}) (\mb{\xi}^{\star} - \mb{\xi})}_{a}^{2} \geq (C \norm{\mb{\phi}(\mb{\xi}) - \mb{\phi}(\mb{\xi}^{\star})}_{U, 2} + \inf_{v \in V} \norm{u^{\star} - v}_{a}) \norm{\nabla^{2} \red{\mc{R}}(\mb{\eta}) (\mb{\xi}^{\star} - \mb{\xi}, \mb{\xi}^{\star} - \mb{\xi})}_{a},$$
for all $\mb{\eta} \in \cc{\mb{\xi}}{\mb{\xi}^{\star}}$. We are interested in satisfying directional convexity in a neighbourhood of the type $\Lambda_{\psi}^{\star}(\rho)$ for some $\rho > 0$. Assuming that $\mb{\phi}$ is Lipschitz (\subassum{linear-boundedness-regularity}{2}), a stronger condition for directional convexity in $\Lambda_{\psi}^{\star}(\rho)$ is the following: for all $\mb{\xi} \in \Lambda_{\psi}^{\star}(\rho)$, there exists $\mb{\xi}^{\star} \in \Pi_{\psi}^{\star}(\mb{\xi})$, such that for all $\mb{\eta} \in [\mb{\xi}, \mb{\xi}^{\star}]$, it holds
$$\norm{\nabla \red{\mc{R}}(\mb{\eta}) (\mb{\xi}^{\star} - \mb{\xi})}_{a}^{2} \geq (C L_{\mb{\phi}} \rho + \inf_{v \in V} \norm{u^{\star} - v}_{a}) \norm{\nabla^{2} \red{\mc{R}}(\mb{\eta}) (\mb{\xi}^{\star} - \mb{\xi}, \mb{\xi}^{\star} - \mb{\xi})}_{a},$$
where $L_{\mb{\phi}}$ is the Lipschitz constant of $\mb{\phi}$.

Intuitively, this condition characterises a form of quantitative directional convexity in directions pointing towards global minimisers. It ensures that, in a sufficiently small neighbourhood of the global minimisers, the directional derivative of $\red{\mc{K}}$ in the descent direction $\mb{\xi}^{\star} - \mb{\xi}$ dominates the directional curvature, with a bound that scales linearly in the distance to the minimisers. This provides a measure of how sharply the function descends towards its minima, and can be interpreted as a localised, geometric analogue of gradient dominance. This condition has been introduced in~\cite{traonmilin2023basins} in the context of non-convex inverse problems, where it plays a central role in quantifying the size of the basins of attraction around global minimisers.

\section{Conclusion}
\label{sect:conclusion}
We develop a general optimisation framework for solving variational \acp{pde} over nonlinear approximation spaces. The method combines energy minimisation in linear parameters with constrained mirror descent for nonlinear parameters. We provide a theoretical foundation ensuring local and global convergence under structural assumptions, including differentiability, boundedness, and directional convexity of the discrete energy functional.

These assumptions are stated in a modular fashion, enabling applicability to a broad class of nonlinear approximation manifolds. Examples include adaptive bases built from piecewise polynomials, wavelets, or radial basis functions. The framework also naturally extends to sparse grid methods, where hierarchical decompositions and selective tensor-product combinations replace full tensor-product spaces \cite{bungartz2004sparse}. In a companion paper \cite{companion}, we explore its application to overlapping tensor-product free-knot B-spline spaces. A detailed study of other constructions is left for future work.

Another interesting direction is the preconditioning of the optimisation process through Hessian-based techniques. In particular, preconditioning the gradient using (an approximation of) the Hessian matrix---as in second-order methods such as Newton or quasi-Newton algorithms, or in natural gradient descent \cite{martens2020new}---could help mitigate ill-conditioning in the nonlinear optimisation problem. This may accelerate convergence and improve robustness by incorporating curvature information, accounting for the geometry of the parameter space, and better balancing the scales of different optimisation variables.

\section*{Acknowledgements}
\label{sect:acknowledgements}
This research was partially funded by the Australian Government through the Australian Research Council (project DP220103160). A. Magueresse gratefully acknowledges the Monash Graduate Scholarship from Monash University.

\appendix

\section{Proofs}
\label{sect:proofs}
\subsection{Proof of \texorpdfstring{\lem{consistency}}{Lemma~\ref{lem:consistency}}}
\label{app:consistency}

\begin{proof}
    Let $\mb{w} \in \operatorname{Ker}(\mb{A}(\mb{\xi}))$. By coercivity of the continuous bilinear form $a$, we have
    $$\alpha \norm{\mc{R}(\mb{w}, \mb{\xi})}_{U}^{2} \leq a(\mc{R}(\mb{w}, \mb{\xi}), \mc{R}(\mb{w}, \mb{\xi})) = \mb{w}^{\ast} \mb{A}(\mb{\xi}) \mb{w} = 0,$$
    and thus $\mc{R}(\mb{w}, \mb{\xi}) = 0$. In particular, this implies
    $$\ell(\mc{R}(\mb{w}, \mb{\xi})) = \mb{w}^{\ast} \mb{\ell}(\mb{\xi}) = 0.$$
    Let us now show that $\mb{\ell}(\mb{\xi}) \in \operatorname{Im}(\mb{A}(\mb{\xi}))$. Since $\mb{A}(\mb{\xi})$ is symmetric with real coefficients, it admits an orthogonal diagonalisation $\mb{A}(\mb{\xi}) = \mb{Q}(\mb{\xi})^{\ast} \mb{\Lambda}(\mb{\xi}) \mb{Q}(\mb{\xi})$, where $\mb{Q}(\mb{\xi}) \in \RR^{\dl \times \dl}$ is orthogonal and $\mb{\Lambda}(\mb{\xi}) \in \RR^{\dl \times \dl}$ is diagonal, with diagonal entries $(\lambda_{k})_{k \in \rg{1}{\dl}}$, the eigenvalues of $\mb{A}(\mb{\xi})$. Let $(\mb{q}_{k})_{k \in \rg{1}{\dl}}$ denote the columns of $\mb{Q}(\mb{\xi})$, so that $\mb{q}_{k}$ is an eigenvector associated with $\lambda_{k}$. In particular, $(\mb{q}_{k})_{k \in \rg{1}{\dl}}$ is a basis so there exist $(c_{k})_{k \in \rg{1}{\dl}}$ such that $\mb{\ell}(\mb{\xi}) = \sum_{k = 1}^{\dl} c_{k} \mb{q}_{k}$. Let $k \in \rg{1}{\dl}$ such that $\lambda_{k} = 0$. Then $\mb{q}_{k} \in \operatorname{Ker}(\mb{A}(\mb{\xi}))$ and by what is above, we infer that $\mb{q}_{k}^{\ast} \mb{\ell}(\mb{\xi}) = c_{k} = 0$. This shows that $\mb{\ell}(\mb{\xi}) \in \operatorname{Span}(\{\mb{q}_{k} : \lambda_{k} \neq 0\}) = \operatorname{Im}(\mb{A}(\mb{\xi}))$, so the linear system is consistent.

    If $\mb{w}_{1}, \mb{w}_{2} \in \md{W}$ are two solutions of the system, then $\mb{w}_{1} - \mb{w}_{2} \in \operatorname{Ker}(\mb{A}(\mb{\xi}))$, and therefore $\mc{R}(\mb{w}_{1}, \mb{\xi}) = \mc{R}(\mb{w}_{2}, \mb{\xi})$. This shows that the realisation is independent of the choice of solution.
\end{proof}

\subsection{Proof of \texorpdfstring{\lem{energy-decrease}}{Lemma~\ref{lem:energy-decrease}}}
\label{app:energy-decrease}

\begin{proof}
    Let $\mb{w} \in \md{W}$ and $\mb{\xi} \in \md{X}$, and $\mb{w}_{+}$ correspond to the full linear solve. Note that $\DW \mc{K}(\mb{w}, \mb{\xi}) = \mb{A}(\mb{\xi}) \mb{w} - \mb{\ell} = \mb{A}(\mb{\xi}) (\mb{w} - \mb{w}_{+})$. We thus express
    \begin{align*}
        \mc{K}(\mb{w}_{+}, \mb{\xi}) & = \mc{K}(\mb{w}, \mb{\xi}) - \frac{1}{2} (\mb{w} - \mb{w}_{+})^{\ast} \mb{A}(\mb{\xi}) (\mb{w} - \mb{w}_{+})                    \\
                                     & = \mc{K}(\mb{w}, \mb{\xi}) - \frac{1}{2} \DW \mc{K}(\mb{w}, \mb{\xi})^{\ast} \mb{A}(\mb{\xi})^{-1} \DW \mc{K}(\mb{w}, \mb{\xi}) \\
                                     & \leq \mc{K}(\mb{w}, \mb{\xi}) - \frac{1}{2} \lambda_{\max}(\mb{A}(\mb{\xi}))^{-1} \norm{\DW \mc{K}(\mb{w}, \mb{\xi})}_{2, \ast}
    \end{align*}
    If $\mb{w}_{+}$ corresponds to the steepest descent update, we find
    \begin{align*}
        \mc{K}(\mb{w}_{+}, \mb{\xi}) & = \mc{K}(\mb{w}, \mb{\xi}) + \frac{1}{2} \beta^{2} \mb{r}^{\ast} \mb{A}(\mb{\xi}) \mb{r} + \beta \mb{r}^{\ast} \mb{A}(\mb{\xi}) \mb{w} - \beta \mb{r}^{\ast} \mb{\ell}(\mb{\xi}) \\
                                     & = \mc{K}(\mb{w}, \mb{\xi}) - \frac{1}{2} \frac{(\mb{r}, \mb{r})_{2}^{2}}{(\mb{r}, \mb{A}(\mb{\xi}) \mb{r})_{2}}.
    \end{align*}
    Here again we conclude using the largest eigenvalue of $\mb{A}(\mb{\xi})$ and the fact that $\DW \mc{K}(\mb{w}, \mb{\xi}) = \mb{r}$.

    To bound the largest eigenvalue of $\mb{A}(\mb{\xi})$, the continuity of $a$ gives
    $$\mb{w}^{\ast} \mb{A}(\mb{\xi}) \mb{w} = a(\mc{R}(\mb{w}, \mb{\xi}), \mc{R}(\mb{w}, \mb{\xi})) \leq \norm{a}_{U \times U} \norm{\mc{R}(\mb{w}, \mb{\xi})}_{U}^{2}.$$
    Now, the Gram matrix $\mb{G}(\mb{\xi})$ allows one to write
    $$\norm{\mc{R}(\mb{w}, \mb{\xi})}_{U}^{2} = \mb{w}^{\ast} \mb{G}(\mb{\xi}) \mb{w} \leq \lambda_{\max}(\mb{G}(\mb{\xi})) \norm{\mb{w}}_{2}^{2}.$$
    Since the Frobenius norm of a symmetric matrix is equal to the sum the squares of its eigenvalues, one can bound $\lambda_{\max}(\mb{G}(\mb{\xi}))^{2}$ by the Frobenius norm of $\mb{G}(\mb{\xi})$. Using the Cauchy-Schwarz inequality, we obtain
    $$\lambda_{\max}(\mb{G}(\mb{\xi}))^{2} \leq \sum_{i, j = 1}^{\dl} (\mb{\phi}_{i}(\mb{\xi}), \mb{\phi}_{j}(\mb{\xi}))_{U}^{2} \leq \sum_{i, j = 1}^{\dl} \norm{\mb{\phi}_{i}(\mb{\xi})}_{U}^{2} \norm{\mb{\phi}_{j}(\mb{\xi})}_{U}^{2} = \norm{\mb{\phi}(\mb{\xi})}_{U, 2}^{4}.$$
    We conclude that $\lambda_{\max}(\mb{G}(\mb{\xi})) \leq \norm{\mb{\phi}(\mb{\xi})}_{U, 2}^{2}$ and $\lambda_{\max}(\mb{A}(\mb{\xi})) \leq \norm{a}_{U \times U} \norm{\mb{\phi}(\mb{\xi})}_{U, 2}^{2}$.
\end{proof}

\subsection{Tools for \texorpdfstring{\lem{local-convergence}}{Lemma~\ref{lem:local-convergence}}, \texorpdfstring{\lem{surrogate}}{Lemma~\ref{lem:surrogate}} and \texorpdfstring{\lem{global-convergence}}{Lemma~\ref{lem:global-convergence}}}
Let $\mb{w} \in \md{W}$. The function $\mc{K}(\mb{w}, \cdot)$ has Hölder gradients with exponent $\nu$ and constant $L(\mb{w})$. It is well-known that the fundamental theorem of calculus then yields
$$\mc{K}(\mb{w}, \mb{\eta}) \leq \mc{K}(\mb{w}, \mb{\xi}) + \inner{\DX \mc{K}(\mb{w}, \mb{\xi})}{\mb{\eta} - \mb{\xi}}_{\md{X}} + \frac{L(\mb{w})}{1 + \nu} \norm{\mb{\eta} - \mb{\xi}}_{\md{X}}^{1 + \nu}$$
for all $\mb{\xi}, \mb{\eta} \in \md{X}$. We now repeat an argument used in \cite{nesterov2015universal} to convert Hölder continuity into Lipschitz continuity up to an arbitrary small constant: by Young's inequality, it holds
$$a b \leq \frac{a^{p}}{p} + \frac{b^{q}}{q},$$
for all $a, b \geq 0$ and $p, q \geq 1$ satisfying $1/p + 1/q = 1$. Taking $a = \norm{\mb{\xi} - \mb{\eta}}_{\md{X}}^{1 + \nu}$, $p = 2 / (1 + \nu)$ and therefore $q = 2 / (1 - \nu)$, we find
$$\frac{L(\mb{w})}{1 + \nu} \norm{\mb{\xi} - \mb{\eta}}_{\md{X}}^{1 + \nu} \leq \frac{L(\mb{w})}{2 b} \norm{\mb{\xi} - \mb{\eta}}_{\md{X}}^{2} + \frac{1}{2} \frac{1 - \nu}{1 + \nu} b^{\frac{1 + \nu}{1 - \nu}} L(\mb{w}).$$
Letting $\epsilon = \frac{1}{2} \frac{1 - \nu}{1 + \nu} b^{\frac{1 + \nu}{1 - \nu}} L(\mb{w}) > 0$ and solving for $b$ in terms of $\epsilon$, we obtain
$$\mc{K}(\mb{w}, \mb{\eta}) \leq \mc{K}(\mb{w}, \mb{\xi}) + \inner{\DX \mc{K}(\mb{w}, \mb{\xi})}{\mb{\eta} - \mb{\xi}}_{\md{X}} + \frac{1}{2} L_{\nu, \epsilon}(\mb{w}) \norm{\mb{\eta} - \mb{\xi}}_{\md{X}}^{1 + \nu} + \epsilon$$
where we have introduced the Lipschitz constant
$$L_{\nu, \epsilon}(\mb{w}) \doteq \left(\frac{1}{2 \epsilon} \frac{1 - \nu}{1 + \nu}\right)^{\frac{1 - \nu}{1 + \nu}} L(\mb{w})^{\frac{2}{1 + \nu}}.$$
If $\nu = 1$, one can also take $\epsilon = 0$, and in that case $L_{\nu, \epsilon}(\mb{w}) = L(\mb{w})$.

\subsection{Proof of \texorpdfstring{\lem{local-convergence}}{Lemma~\ref{lem:local-convergence}}}
\label{app:local-convergence}

\begin{proof}
    Let $(\mb{w}, \mb{\xi}) \in \md{W} \times \md{X}$. The Hölder continuity of $\DX \mc{K}$ at $\mb{w}$ implies
    $$\mc{K}(\mb{w}, \mb{\xi}_{+}) \leq \mc{K}(\mb{w}, \mb{\xi}) + \inner{\DX \mc{K}(\mb{w}, \mb{\xi})}{\mb{\xi}_{+} - \mb{\xi}}_{\md{X}} + \frac{1}{2} L_{\nu, \epsilon}(\mb{w}) \norm{\mb{\xi}_{+} - \mb{\xi}}_{\md{X}}^{2} + \epsilon,$$
    for any $\epsilon > 0$. The first-order optimality condition characterising $\mb{\xi}_{+}$ \eqref{eq:def-prox} states
    $$-\gamma \DX \mc{K}(\mb{w}, \mb{\xi}) - \nabla \psi(\mb{\xi}_{+}) + \nabla \psi(\mb{\xi}) \in N_{\md{X}}(\mb{\xi}_{+}).$$
    In particular,
    $$\inner{-\gamma \DX \mc{K}(\mb{w}, \mb{\xi}) - \nabla \psi(\mb{\xi}_{+}) + \nabla \psi(\mb{\xi})}{\mb{\xi} - \mb{\xi}_{+}}_{\md{X}} \leq 0,$$
    which is equivalent to
    $$\gamma \inner{\DX \mc{K}(\mb{w}, \mb{\xi})}{\mb{\xi}_{+} - \mb{\xi}}_{\md{X}} \leq -\inner{\nabla \psi(\mb{\xi}_{+}) - \nabla \psi(\mb{\xi})}{\mb{\xi}_{+} - \mb{\xi}}_{\md{X}}.$$
    The strong convexity of $\psi$ gives the upper bound $- \mu \norm{\mb{\xi}_{+} - \mb{\xi}}_{\md{X}}^{2}$ for the right-hand side. Combining these inequalities and the definition of the gradient mapping, we reach
    $$\mc{K}(\mb{w}, \mb{\xi}_{+}) \leq \mc{K}(\mb{w}, \mb{\xi}) - \frac{1}{2} \gamma (2 \mu - \gamma L_{\nu, \epsilon}(\mb{w})) \norm{\gradmap{\mb{w}}{\mb{\xi}}{\gamma}}_{\md{X}, \ast}^{2} + \epsilon.$$
    Using the energy decrease condition \eqref{eq:decrease}, we arrive at
    \begin{equation}
        \label{eq:local-descent}
        \mc{K}(\mb{w}_{+}, \mb{\xi}_{+}) \leq \mc{K}(\mb{w}, \mb{\xi}) - \frac{1}{2} \gamma (2 \mu - \gamma L_{\nu, \epsilon}(\mb{w})) \norm{\gradmap{\mb{w}}{\mb{\xi}}{\gamma}}_{\md{X}, \ast}^{2} - \frac{1}{2} \beta(\mb{\xi})^{-1} \norm{\DW \mc{K}(\mb{w}, \mb{\xi}_{+})}_{\md{W}, \ast}^{2} + \epsilon.
    \end{equation}
    Let now $(\mb{w}_{0}, \mb{\xi}_{0}) \in \md{W} \times \md{X}$ and $(\mb{w}_{k}, \mb{\xi}_{k})_{k \geq 0}$ denote the iterates produced by \alg{optimiser} with step sizes $\gamma_{k} > 0$. The estimate \eqref{eq:local-descent} holds at each iteration. Summing this inequality and recognising a telescoping sum, we obtain
    $$\sum_{k = 0}^{n-1} A_{k} \norm{\gradmap{\mb{w}_{k}}{\mb{\xi}_{k}}{\gamma_{k}}}_{\md{X}, \ast}^{2} + B_{k} \norm{\DW \mc{K}(\mb{w}_{k}, \mb{\xi}_{k+1})}_{\md{W}, \ast}^{2} \leq 2 (\mc{K}(\mb{w}_{0}, \mb{\xi}_{0}) - \mc{K}(\mb{w}_{n}, \mb{\xi}_{n}) + n \epsilon),$$
    where $A_{k} \doteq \gamma_{k} (2 \mu - \gamma_{k} L_{\nu, \epsilon}(\mb{w}_{k}))$ and $B_{k} \doteq \beta(\mb{\xi}_{k})^{-1}$
    Assuming $\sum_{k = 0}^{n-1} \min(A_{k}, B_{k}) > 0$ and dividing by that sum to form a weighted mean, we conclude
    $$\min_{0 \leq k \leq n-1} \left\{\norm{\gradmap{\mb{w}_{k}}{\mb{\xi}_{k}}{\gamma_{k}}}_{\md{X}, \ast}^{2} + \norm{\DW \mc{K}(\mb{w}_{k}, \mb{\xi}_{k+1})}_{\md{W}, \ast}^{2}\right\} \leq \frac{2 (\mc{K}(\mb{w}_{0}, \mb{\xi}_{0}) - \mc{K}^{\star} + n \epsilon)}{\sum_{k = 0}^{n-1} \min(A_{k}, B_{k})},$$
    where we also used $\mc{K}(\mb{w}_{n}, \mb{\xi}_{n}) \geq \mc{K}^{\star}$.
\end{proof}

\subsection{Proof of \texorpdfstring{\lem{surrogate}}{Lemma~\ref{lem:surrogate}}}
\label{app:surrogate}

\begin{proof}
    The optimality condition defining $\mb{\xi}_{+} \in \proxmap{\mb{w}}{\mb{\xi}}{\gamma}$ \eqref{eq:def-prox} states $\nabla \psi(\mb{\xi}) - \nabla \psi(\mb{\xi}_{+}) - \gamma \DX \mc{K}(\mb{w}, \mb{\xi}) \in N_{\md{X}}(\mb{\xi}_{+})$. Dividing by $\gamma$ and adding and subtracting $\DX \mc{K}(\mb{w}, \mb{\xi}_{+})$, this is equivalent to
    $$- \DX \mc{K}(\mb{w}, \mb{\xi}_{+}) + \underbrace{[\DX \mc{K}(\mb{w}, \mb{\xi}_{+}) - \DX \mc{K}(\mb{w}, \mb{\xi}) + \gamma^{-1} (\nabla \psi(\mb{\xi}) - \nabla \psi(\mb{\xi}_{+}))]}_{\doteq \mb{v}} \in N_{\md{X}}(\mb{\xi}_{+}).$$
    Using the Hölder continuity of $\DX \mc{K}$, we compute
    \begin{align*}
        \norm{\mb{v}}_{\md{X}, \ast} & \leq \norm{\DX \mc{K}(\mb{w}, \mb{\xi}_{+}) - \DX \mc{K}(\mb{w}, \mb{\xi})}_{\md{X}, \ast} + \gamma^{-1} \norm{\nabla \psi(\mb{\xi}_{+}) - \nabla \psi(\mb{\xi})}_{\md{X}, \ast} \\
                                     & \leq L \norm{\mb{\xi}_{+} - \mb{\xi}}_{\md{X}}^{\nu} + \mu \gamma^{-1} \norm{\mb{\xi}_{+} - \mb{\xi}}_{\md{X}, \ast}.
    \end{align*}
    This shows $-\DX \mc{K}(\mb{w}, \mb{\xi}_{+}) \in N_{\md{X}}(\mb{\xi}_{+}) + \overline{B}_{\md{X}, \ast}(0, L c^{\nu} + \mu \gamma^{-1} c)$, where $c = \norm{\mb{\xi}_{+} - \mb{\xi}}_{\md{X}}$.

    Let now $h^{2} = \norm{\gradmap{\mb{w}}{\mb{\xi}}{\gamma}}_{\md{X}, \ast}^{2} + \norm{\DW \mc{K}(\mb{w}, \mb{\xi}_{+})}_{\md{W}, \ast}^{2}$. In particular, the two terms are smaller than $h$. Therefore $\norm{\mb{\xi}_{+} - \mb{\xi}} \leq \gamma h$ and thus $-\DX \mc{K}(\mb{w}, \mb{\xi}_{+}) \in N_{\md{X}}(\mb{\xi}_{+}) + \overline{B}_{\md{X}, \ast}(0, L (\gamma h)^{\nu} + \mu h)$. We conclude that $(\mb{w}, \mb{\xi}_{+})$ is a $(L (\gamma h)^{\nu} + \mu h)$-quasi-stationary point.
\end{proof}

\subsection{Proof of \texorpdfstring{\lem{global-convergence}}{Lemma~\ref{lem:global-convergence}}}
\label{app:global-convergence}

\begin{proof}
    Let $\mb{\xi} \in \Lambda_{\psi}^{\star}(\rho)$ and $\mb{\xi}_{+} \in \proxmap{\best{\mb{w}}(\mb{\xi})}{\mb{\xi}}{\gamma}$ for some $\gamma > 0$. By the directional convexity assumption, there exists $\mb{\xi}^{\star} \in \Pi_{\psi}^{\star}(\mb{\xi})$ such that $\red{\mc{K}}$ is convex on the segment $\cc{\mb{\xi}}{\mb{\xi}^{\star}}$. Therefore
    \begin{align}
        \label{eq:convexity-lipschitz}
        \red{\mc{K}}(\mb{\xi}) & \leq \red{\mc{K}}(\mb{\xi}^{\star}) + \inner{\nabla \red{\mc{K}}(\mb{\xi})}{\mb{\xi} - \mb{\xi}^{\star}}_{\md{X}}                                                               \nonumber                                      \\
                               & = \mc{K}^{\star} + \inner{\nabla \red{\mc{K}}(\mb{\xi})}{\mb{\xi} - \mb{\xi}_{+}}_{\md{X}} + \inner{\nabla \red{\mc{K}}(\mb{\xi})}{\mb{\xi}_{+} - \mb{\xi}^{\star}}_{\md{X}}                     \nonumber                     \\
                               & \leq \mc{K}^{\star} + \red{\mc{K}}(\mb{\xi}) - \red{\mc{K}}(\mb{\xi}_{+}) + \frac{1}{2} \red{L} \norm{\mb{\xi}_{+} - \mb{\xi}}_{\md{X}}^{2} + \inner{\nabla \red{\mc{K}}(\mb{\xi})}{\mb{\xi}_{+} - \mb{\xi}^{\star}}_{\md{X}},
    \end{align}
    where we used the Lipschitz continuity of $\nabla \red{\mc{K}}$ to bound $\inner{\nabla \red{\mc{K}}(\mb{\xi})}{\mb{\xi} - \mb{\xi}_{+}}_{\md{X}}$. The first-order optimality condition \eqref{eq:def-prox} defining $\mb{\xi}_{+}$ at $\mb{\xi}^{\star}$ yields
    $$\inner{-\gamma \nabla \red{\mc{K}}(\mb{\xi}) - \nabla \psi(\mb{\xi}_{+}) + \nabla \psi(\mb{\xi}) \in N_{\md{X}}(\mb{\xi}_{+})}{\mb{\xi}^{\star} - \mb{\xi}_{+}}_{\md{X}} \leq 0,$$
    which is equivalent to
    $$\gamma \inner{\nabla \red{\mc{K}}(\mb{\xi})}{\mb{\xi}_{+} - \mb{\xi}^{\star}}_{\md{X}} \leq \inner{\nabla \psi(\mb{\xi}) - \nabla \psi(\mb{\xi}_{+})}{\mb{\xi}_{+} - \mb{\xi}^{\star}}_{\md{X}}.$$
    From the definition of the Bregman divergence, we rewrite
    $$\inner{\nabla \psi(\mb{\xi}) - \nabla \psi(\mb{\xi}_{+})}{\mb{\xi}_{+} - \mb{\xi}^{\star}}_{\md{X}} = D_{\psi}(\mb{\xi}^{\star}; \mb{\xi}) - D_{\psi}(\mb{\xi}^{\star}; \mb{\xi}_{+}) - D_{\psi}(\mb{\xi}_{+}; \mb{\xi}).$$
    Plugging this inequality in \eqref{eq:convexity-lipschitz} and simplifying, we reach
    $$\red{\mc{K}}(\mb{\xi}_{+}) \leq \mc{K}^{\star} - \gamma^{-1} D_{\psi}(\mb{\xi}_{+}; \mb{\xi}) + \frac{1}{2} \red{L} \norm{\mb{\xi}_{+} - \mb{\xi}}_{\md{X}}^{2} + \gamma^{-1} (D_{\psi}(\mb{\xi}^{\star}; \mb{\xi}) - D_{\psi}(\mb{\xi}^{\star}; \mb{\xi}_{+})).$$
    Now, using the fact that $\delta_{\psi}^{\star}(\mb{\xi}) = D_{\psi}(\mb{\xi}^{\star}; \mb{\xi})$, the inequality $\delta_{\psi}^{\star}(\mb{\xi}_{+}) = D_{\psi}(\mb{\xi}^{\star}; \mb{\xi}_{+})$ and the strong convexity inequality $D_{\psi}(\mb{\xi}; \mb{\eta}) \geq \mu \norm{\mb{\xi} - \mb{\eta}}_{\md{X}}^{2} / 2$, we finally obtain
    \begin{equation}
        \label{eq:global-descent}
        \red{\mc{K}}(\mb{\xi}_{+}) \leq \mc{K}^{\star} - \frac{1}{2} (\mu \gamma^{-1} - \red{L}) \norm{\mb{\xi}_{+} - \mb{\xi}}_{\md{X}}^{2} + \gamma^{-1} (\delta_{\psi}^{\star}(\mb{\xi}) - \delta_{\psi}^{\star}(\mb{\xi}_{+})).
    \end{equation}
    Assuming $\gamma \red{L} \leq \mu$, we find that $\delta_{\psi}^{\star}(\mb{\xi}_{+}) \leq \delta_{\psi}^{\star}(\mb{\xi}) \leq \rho$, and therefore $\mb{\xi}_{+} \in \Lambda_{\psi}^{\star}(\rho)$.

    A quick induction shows that if $\mb{\xi}_{0} \in \Lambda_{\psi}^{\star}(\rho)$ and $\gamma_{k} \red{L} \leq \mu$ for all $k \geq 0$, then the iterates of \alg{optimiser} remain in $\Lambda_{\psi}^{\star}(\rho)$, and that the estimate \eqref{eq:global-descent} holds at each step. According to \lem{local-convergence}, the condition $\gamma_{k} \red{L} \leq \mu$ also ensures that $(\red{\mc{K}}(\mb{\xi}_{k}))_{k \geq 0}$ is non-increasing. We apply \eqref{eq:global-descent} at each step and multiply it by $\gamma_{k}$ in view of telescoping $\delta_{\psi}^{\star}(\mb{\xi}_{k})$. We recognise a weighted average of $(\red{\mc{K}}(\mb{\xi}_{k}) - \mc{K}^{\ast})$ and find
    $$\red{\mc{K}}(\mb{\xi}_{n}) - \mc{K}^{\star} \leq \frac{\sum_{k = 0}^{n-1} \gamma_{k} (\red{\mc{K}}(\mb{\xi}_{k+1}) - \mc{K}^{\star})}{\sum_{k = 0}^{n-1} \gamma_{k}} \leq \frac{\delta_{\psi}^{\star}(\mb{\xi}_{0}) - \delta_{\psi}^{\star}(\mb{\xi}_{n})}{\sum_{k = 0}^{n-1} \gamma_{k}} \leq \frac{\delta_{\psi}^{\star}(\mb{\xi}_{0})}{\sum_{k = 0}^{n-1} \gamma_{k}},$$
    where the first inequality comes from the monotonicity of $(\red{\mc{K}}(\mb{\xi}_{k}))_{k \geq 0}$.
\end{proof}

\subsection{Proof of \texorpdfstring{\corol{nonlinear-cea}}{Corollary~\ref{corol:nonlinear-cea}}}
\label{app:nonlinear-cea}

\begin{proof}
    Since $\red{\mc{K}}$ satisfies the assumptions of \lem{global-convergence}, for all $n \geq 1$ and $v \in V$, we have
    \begin{align*}
        \frac{1}{2} \norm{\red{\mc{R}}(\mb{\xi}_{n}) - u^{\star}}_{a}^{2} & = \red{\mc{K}}(\mb{\xi}_{n}) - \mc{J}(u^{\star})                                                                 \\
                                                                          & = \mc{J}(v) - \mc{J}(u^{\star}) + \red{\mc{K}}(\mb{\xi}_{n}) - \mc{J}(v)                                         \\
                                                                          & \leq \frac{1}{2} \norm{v - u^{\star}}_{a}^{2} + \red{\mc{K}}(\mb{\xi}_{n}) - \mc{K}^{\star}                      \\
                                                                          & \leq \frac{1}{2} \norm{v - u^{\star}}_{a}^{2} + \frac{\red{L} \delta_{\psi}^{\star}(\mb{\xi}_{0})}{\zeta \mu n}.
    \end{align*}
    Taking the infimum on $v \in V$ yields the result of the corollary.
\end{proof}

\subsection{Proof of \texorpdfstring{\lem{regularity-energy}}{Lemma~\ref{lem:regularity-energy}}}
\label{app:regularity-energy}

\begin{proof}
    Let $\mb{v}, \mb{w} \in \md{W}$, and $\mb{\xi}, \mb{\eta} \in \md{X}$. Let $i \in \rg{1}{\dl}$ and $\partial_{i}$ denote the derivative with respect to the $i$-th linear parameter. We express
    $$\partial_{i} \mc{K}(\mb{v}, \mb{\xi}) = a(\mb{v}^{\ast} \mb{\phi}(\mb{\xi}), \mb{\phi}_{i}(\mb{\xi})) - \ell(\mb{\phi}_{i}(\mb{\xi})).$$
    Using the fact that $x = c + d$ and $y = c - d$, where $c = (x + y) / 2$ and $d = (x - y) / 2$, to symmetrise $\partial_{i} \mc{K}(\mb{v}, \mb{\xi}) - \partial_{i} \mc{K}(\mb{w}, \mb{\eta})$, we find
    \begin{align*}
        \abs{\partial_{i} \mc{K}(\mb{v}, \mb{\xi}) - \partial_{i} \mc{K}(\mb{w}, \mb{\eta})} & \leq \frac{1}{4} \abs{a([\mb{v} - \mb{w}]^{\ast} [\mb{\phi}(\mb{\xi}) + \mb{\phi}(\mb{\eta})], \mb{\phi}_{i}(\mb{\xi}) + \mb{\phi}_{i}(\mb{\eta}))} \\
                                                                                             & + \frac{1}{4} \abs{a([\mb{v} + \mb{w}]^{\ast} [\mb{\phi}(\mb{\xi}) - \mb{\phi}(\mb{\eta})], \mb{\phi}_{i}(\mb{\xi}) + \mb{\phi}_{i}(\mb{\eta}))}    \\
                                                                                             & + \frac{1}{2} \abs{a(\mb{v}^{\ast} \mb{\phi}(\mb{\xi}) + \mb{w}^{\ast} \mb{\phi}(\mb{\eta}), \mb{\phi}_{i}(\mb{\xi}) - \mb{\phi}_{i}(\mb{\eta}))}   \\
                                                                                             & + \abs{\ell(\mb{\phi}_{i}(\mb{\xi}) - \mb{\phi}_{i}(\mb{\eta}))}.
    \end{align*}
    The inequality of the lemma follows from the boundedness of $a$ and $\ell$, the triangle inequality, the Cauchy-Schwarz inequality $\norm{\mb{v}^{\ast} \mb{\phi}(\mb{\xi})}_{U} \leq \norm{\mb{v}}_{2} \norm{\mb{\phi}(\mb{\xi})}_{U, 2}$, the fact that $x + y \leq 2 \max(x, y)$ and $\max(x y, z t) \leq \max(x, z) \max(y, t)$ for all $x, y, z, t \geq 0$.

    For all $i \in \rg{1}{\dnl}$, let $\partial_{i}$ now denote the derivative with respect to the $i$-th nonlinear parameter. Since $\partial_{i} \mb{\phi}_{k}(\mb{\xi}) \in U$, we have
    $$\partial_{i} \mc{K}(\mb{v}, \mb{\xi}) = a(\mb{v}^{\ast} \mb{\phi}(\mb{\xi}), \mb{v}^{\ast} \partial_{i} \mb{\phi}(\mb{\xi})) - \ell(\mb{v}^{\ast} \partial_{i} \mb{\phi}(\mb{\xi})).$$
    Applying the same symmetrisation technique as above twice to isolate each term with a single subtraction, we find
    \begin{align*}
        \abs{\partial_{i} \mc{K}(\mb{v}, \mb{\xi}) - \partial_{i} \mc{K}(\mb{w}, \mb{\eta})} & \leq \frac{1}{4} \abs{a(\mb{v}^{\ast} \mb{\phi}(\mb{\xi}) + \mb{w}^{\ast} \mb{\phi}(\mb{\eta}), [\mb{v} - \mb{w}]^{\ast} [\partial_{i} \mb{\phi}(\mb{\xi}) + \partial_{i} \mb{\phi}(\mb{\eta})])} \\
                                                                                             & + \frac{1}{4} \abs{a(\mb{v}^{\ast} \mb{\phi}(\mb{\xi}) + \mb{w}^{\ast} \mb{\phi}(\mb{\eta}), [\mb{v} + \mb{w}]^{\ast} [\partial_{i} \mb{\phi}(\mb{\xi}) - \partial_{i} \mb{\phi}(\mb{\eta})])}    \\
                                                                                             & + \frac{1}{4} \abs{a([\mb{v} - \mb{w}]^{\ast} [\mb{\phi}(\mb{\xi}) + \mb{\phi}(\mb{\eta})], \mb{v}^{\ast} \partial_{i} \mb{\phi}(\mb{\xi}) + \mb{w}^{\ast} \partial_{i} \mb{\phi}(\mb{\eta}))}    \\
                                                                                             & + \frac{1}{4} \abs{a([\mb{v} + \mb{w}]^{\ast} [\mb{\phi}(\mb{\xi}) - \mb{\phi}(\mb{\eta})], \mb{v}^{\ast} \partial_{i} \mb{\phi}(\mb{\xi}) + \mb{w}^{\ast} \partial_{i} \mb{\phi}(\mb{\eta}))}    \\
                                                                                             & + \frac{1}{2} \abs{\ell([\mb{v} - \mb{w}]^{\ast} [\partial_{i} \mb{\phi}(\mb{\xi}) + \partial_{i} \mb{\phi}(\mb{\eta})])}                                                                         \\
                                                                                             & + \frac{1}{2} \abs{\ell([\mb{v} + \mb{w}]^{\ast} [\partial_{i} \mb{\phi}(\mb{\xi}) - \partial_{i} \mb{\phi}(\mb{\eta})])}.
    \end{align*}
    We conclude using the same arguments as above.
\end{proof}

\subsection{Proof of \texorpdfstring{\lem{bounds-best-linear}}{Lemma~\ref{lem:bounds-best-linear}}}
\label{app:bounds-best-linear}

\begin{proof}
    Let $\mb{\xi} \in \md{X}$. The definition of $\best{\mb{w}}(\mb{\xi})$, \assum{spd} and the Cauchy-Schwarz inequality yield
    \begin{align*}
        \alpha \omega_{\min} \norm{\best{\mb{w}}(\mb{\xi})}_{2}^{2} & \leq \best{\mb{w}}(\mb{\xi})^{\ast} \mb{A}(\mb{\xi}) \best{\mb{w}}(\mb{\xi})               \\
                                                                    & = \best{\mb{w}}(\mb{\xi})^{\ast} \mb{\ell}(\mb{\xi})                                       \\
                                                                    & = \ell(\best{\mb{w}}(\mb{\xi})^{\ast} \mb{\phi}(\mb{\xi}))                                 \\
                                                                    & \leq \norm{\ell}_{U} \norm{\best{\mb{w}}(\mb{\xi})}_{2} \norm{\mb{\phi}(\mb{\xi})}_{U, 2}.
    \end{align*}
    The first inequality of the lemma is obtained by dividing this inequality by $\alpha \omega_{\min} \norm{\best{\mb{w}}(\mb{\xi})}_{2}$ and invoking \subassum{linear-boundedness-regularity}{1} to bound $\norm{\mb{\phi}(\mb{\xi})}_{U, 2}$.

    Let now $\mb{\eta} \in \md{X}$ such that $\omega(\mb{\eta}) > 0$. We first show that the matrix $(1 - t) \mb{A}(\mb{\xi}) + t \mb{A}(\mb{\eta})$ is invertible for all $t \in \cc{0}{1}$. Let $\mb{v} \in \md{W}$. We compute
    \begin{align*}
        \mb{v}^{\ast} [(1 - t) \mb{A}(\mb{\xi}) + t \mb{A}(\mb{\eta})] \mb{v} & = (1 - t) \mb{v}^{\ast} \mb{A}(\mb{\xi}) \mb{v} + t \mb{v}^{\ast} \mb{A}(\mb{\eta}) \mb{v}             \\
                                                                              & \geq (1 - t) \alpha \omega_{\min} \norm{\mb{v}}_{2}^{2} + t \alpha \omega_{\min} \norm{\mb{v}}_{2}^{2} \\
                                                                              & = \alpha \omega_{\min} \norm{\mb{v}}_{2}^{2}.
    \end{align*}
    This shows that the smallest eigenvalue of $(1 - t) \mb{A}(\mb{\xi}) + t \mb{A}(\mb{\eta})$ is greater than $\alpha \omega_{\min}$. This enables the definition of the map
    $$h: \cc{0}{1} \to \md{W}, \quad t \mapsto [(1 - t) \mb{A}(\mb{\xi}) + t \mb{A}(\mb{\eta})]^{-1} [(1 - t) \mb{\ell}(\mb{\xi}) + t \mb{\ell}(\mb{\eta})],$$
    such that $\best{\mb{w}}(\mb{\xi}) - \best{\mb{w}}(\mb{\eta}) = h(0) - h(1)$. This map is differentiable and its derivative is
    \begin{align*}
        h'(t) & = [(1 - t) \mb{A}(\mb{\xi}) + t \mb{A}(\mb{\eta})]^{-1} [\mb{A}(\mb{\xi}) - \mb{A}(\mb{\eta})] [(1 - t) \mb{A}(\mb{\xi}) + t \mb{A}(\mb{\eta})]^{-1} [(1 - t) \mb{\ell}(\mb{\xi}) + t \mb{\ell}(\mb{\eta})] \\
              & + [(1 - t) \mb{A}(\mb{\xi}) + t \mb{A}(\mb{\eta})]^{-1} [\mb{\ell}(\mb{\eta}) - \mb{\ell}(\mb{\xi})].
    \end{align*}
    By the mean value theorem there exists $t \in \oo{0}{1}$ such that $h(1) - h(0) = h'(t)$. Taking the norm of that equality, we infer
    \begin{align*}
        \norm{\best{\mb{w}}(\mb{\xi}) - \best{\mb{w}}(\mb{\eta})}_{2} & \leq \sigma_{\max}([(1 - t) \mb{A}(\mb{\xi}) + t \mb{A}(\mb{\eta})]^{-1})^{2} \sigma_{\max}(\mb{A}(\mb{\xi}) - \mb{A}(\mb{\eta})) \norm{(1 - t) \mb{\ell}(\mb{\xi}) + t \mb{\ell}(\mb{\eta})}_{2} \\
                                                                      & + \sigma_{\max}([(1 - t) \mb{A}(\mb{\xi}) + t \mb{A}(\mb{\eta})]^{-1}) \norm{\ell(\mb{\phi}(\mb{\eta}) - \mb{\phi}(\mb{\xi}))}_{2}                                                                \\
                                                                      & \leq \alpha^{-2} \omega_{\min}^{-2} \norm{\ell}_{U} M_{\mb{\phi}} \sigma_{\max}(\mb{A}(\mb{\xi}) - \mb{A}(\mb{\eta}))                                                                             \\
                                                                      & + \alpha^{-1} \omega_{\min}^{-1} \norm{\ell}_{U} \norm{\mb{\phi}(\mb{\xi}) - \mb{\phi}(\mb{\eta})}_{U, 2},
    \end{align*}
    where we used \subassum{linear-boundedness-regularity}{1} to bound $\max(\norm{\mb{\phi}(\mb{\xi})}_{U, 2}, \norm{\mb{\phi}(\mb{\eta})}_{U, 2})$. We also used the fact that $\norm{\mb{M} \mb{x}}_{2} \leq \sigma_{\max}(\mb{M}) \norm{\mb{x}}_{2}$ for all $\mb{M} \in \RR^{\dl \times \dl}$ and $\mb{x} \in \RR^{\dl}$. Here $\sigma_{\max}(\mb{M})$ denotes the largest singular value of $\mb{M}$. Since the largest singular value of a matrix is bounded by its Frobenius norm, we obtain
    \begin{align*}
        \sigma_{\max}(\mb{A}(\mb{\xi}) - \mb{A}(\mb{\eta}))^{2} & \leq \sum_{i, j = 1}^{\dl} (\mb{A}(\mb{\xi})_{ij} - \mb{A}(\mb{\eta})_{ij})^{2}                                                                                                                                                                                              \\
                                                                & = \sum_{i, j = 1}^{\dl} [\frac{1}{2} a(\mb{\phi}_{i}(\mb{\xi}) - \mb{\phi}_{i}(\mb{\eta}), \mb{\phi}_{j}(\mb{\xi}) + \mb{\phi}_{j}(\mb{\eta})) + \frac{1}{2} a(\mb{\phi}_{i}(\mb{\xi}) + \mb{\phi}_{i}(\mb{\eta}), \mb{\phi}_{j}(\mb{\xi}) - \mb{\phi}_{j}(\mb{\eta}))]^{2}.
    \end{align*}
    The inequality $(x + y)^{2} \leq 2 (x^{2} + y^{2})$ and the symmetry of $a$ provide
    \begin{align*}
        \sigma_{\max}(\mb{A}(\mb{\xi}) - \mb{A}(\mb{\eta}))^{2} & \leq \frac{1}{2} \sum_{i, j = 1}^{\dl} [a(\mb{\phi}_{i}(\mb{\xi}) - \mb{\phi}_{i}(\mb{\eta}), \mb{\phi}_{j}(\mb{\xi}) + \mb{\phi}_{j}(\mb{\eta}))^{2} + a(\mb{\phi}_{i}(\mb{\xi}) + \mb{\phi}_{i}(\mb{\eta}), \mb{\phi}_{j}(\mb{\xi}) - \mb{\phi}_{j}(\mb{\eta}))^{2}] \\
                                                                & = \sum_{i, j = 1}^{\dl} a(\mb{\phi}_{i}(\mb{\xi}) - \mb{\phi}_{i}(\mb{\eta}), \mb{\phi}_{j}(\mb{\xi}) + \mb{\phi}_{j}(\mb{\eta}))^{2}.
    \end{align*}
    Now using the boundedness of $a$ and the definition of the $\norm{\cdot}_{U, 2}$ norm, we obtain
    \begin{align*}
        \sigma_{\max}(\mb{A}(\mb{\xi}) - \mb{A}(\mb{\eta}))^{2} & \leq \sum_{i, j = 1}^{\dl} \norm{a}_{U \times U}^{2} \norm{\mb{\phi}_{i}(\mb{\xi}) - \mb{\phi}_{i}(\mb{\eta})}_{U}^{2} \norm{\mb{\phi}_{j}(\mb{\xi}) + \mb{\phi}_{j}(\mb{\eta})}_{U}^{2} \\
                                                                & = \norm{a}_{U \times U}^{2} \norm{\mb{\phi}(\mb{\xi}) - \mb{\phi}(\mb{\eta})}_{U, 2}^{2} \norm{\mb{\phi}(\mb{\xi}) + \mb{\phi}(\mb{\eta})}_{U, 2}^{2}.
    \end{align*}
    Combining this bound of $\sigma_{\max}(\mb{A}(\mb{\xi}) - \mb{A}(\mb{\eta}))$ and the upper bound of $\norm{\best{\mb{w}}(\mb{\xi}) - \best{\mb{w}}(\mb{\eta})}_{2}$ above shows the second inequality of the lemma.

    The last inequality of the lemma is obtained by combining \lem{regularity-energy} with the first two inequalities.
\end{proof}

\subsection{Proof of \texorpdfstring{\lem{convexity}}{Lemma~\ref{lem:convexity}}}
\label{app:convexity}

\begin{proof}
    Let $\mb{\xi} \in \md{X}$ and $\mb{v} \in \RR^{\dnl}$. Since $\mb{\phi}$ is twice differentiable in $U$, we infer that
    $$\nabla^{2} \red{\mc{K}}(\mb{\xi}) = \DW \DX \mc{K}(\mb{w}, \mb{\xi})|_{\mb{w} = \best{\mb{w}}(\mb{\xi})} + \DX \DX \mc{K}(\mb{w}, \mb{\xi})|_{\mb{w} = \best{\mb{w}}(\mb{\xi})},$$
    where we used the fact that $\DW \DX = \DW \DX$ and $\DW \mc{K}(\best{\mb{w}}(\mb{\xi}), \mb{\xi}) = 0$. Therefore, it is enough to look at the hessian of $\mc{K}$ with respect to $\md{X}$. Recalling the expression of the hessian of the linear and bilinear forms, we write
    \begin{align*}
        \DX^{2} \mc{K}(\mb{w}, \mb{\xi}) (\mb{v}, \mb{v}) & = a(\DX \mc{R}(\mb{w}, \mb{\xi}) \mb{v}, \DX \mc{R}(\mb{w}, \mb{\xi}) \mb{v})                                                              \\
                                                          & + a(\mc{R}(\mb{w}, \mb{\xi}), \DX^{2} \mc{R}(\mb{w}, \mb{\xi}) (\mb{v}, \mb{v})) - \ell(\DX^{2} \mc{R}(\mb{w}, \mb{\xi}) (\mb{v}, \mb{v})) \\
                                                          & = a(\DX \mc{R}(\mb{w}, \mb{\xi}) \mb{v}, \DX \mc{R}(\mb{w}, \mb{\xi}) \mb{v})                                                              \\
                                                          & + a(\mc{R}(\mb{w}, \mb{\xi}) - u^{\star}, \DX^{2} \mc{R}(\mb{w}, \mb{\xi}) (\mb{v}, \mb{v})),
    \end{align*}
    where we used the second-order conformity and the Galerkin orthogonality for the continuous solution. The Cauchy-Schwarz inequality then yields
    \begin{align*}
        \DX^{2} \mc{K}(\mb{w}, \mb{\xi}) (\mb{v}, \mb{v}) & \geq \norm{\DX \mc{R}(\mb{w}, \mb{\xi}) \mb{v}}_{a}^{2} - \norm{\mc{R}(\mb{w}, \mb{\xi}) - u^{\star}}_{a} \norm{\DX^{2} \mc{R}(\mb{w}, \mb{\xi}) (\mb{v}, \mb{v})}_{a}.
    \end{align*}
    Next, we decompose
    $$\norm{\mc{R}(\mb{w}, \mb{\xi}) - u^{\star}}_{a} \leq \norm{\mc{R}(\mb{w}, \mb{\xi}) - v}_{a} + \norm{v - u^{\star}}_{a}$$
    for all $v \in V$. Choosing $v = \red{\mc{R}}(\mb{\xi}^{\star})$ for some $\mb{\xi}^{\star} \in \md{X}^{\star}$, the second term is equal to the distance from $u^{\star}$ to $V$. Evaluating the expression above at $\mb{w} = \best{\mb{w}}(\mb{\xi})$, we conclude
    $$\nabla^{2} \red{\mc{K}}(\mb{\xi}) (\mb{v}, \mb{v}) \geq \norm{\nabla \red{\mc{R}}(\mb{\xi}) \mb{v}}_{a}^{2} - (\norm{\red{\mc{R}}(\mb{\xi}) - \red{\mc{R}}(\mb{\xi}^{\star})}_{a} + \inf_{v \in V} \norm{u^{\star} - v}_{a}) \norm{\nabla^{2} \red{\mc{R}}(\mb{\xi}) (\mb{v}, \mb{v})}_{a},$$
    for all $\mb{\xi}^{\star} \in \md{X}^{\star}$.
\end{proof}

\printbibliography

\end{document}